\providecommand{\U}[1]{\protect\rule{.1in}{.1in}}
\providecommand{\U}[1]{\protect \rule{.1in}{.1in}}
\newtheorem{theorem}{Theorem}[section]
\newtheorem{proposition}{Proposition}[section]
\newtheorem{remark}{Remark}[section]
\renewcommand{\@biblabel}[1]{}
\begin{document}

\begin{center}
{\large Weighted Estimation of the Tail Index under Right Censorship: A
Unified Approach Based on Kaplan-Meier and Nelson-Aalen Integrals}

{\Large \textbf{ }}\bigskip

{\large Abdelhakim Necir}$^{\ast}${\large , Nour Elhouda Guesmia, Djamel
Meraghni}\medskip\newline

{\small \textit{Laboratory of Applied Mathematics, Mohamed Khider University,
Biskra, Algeria}}\medskip\medskip
\end{center}

\noindent Kaplan-Meier and Nelson-Aalen integral estimators to the tail index
of right-censored Pareto-type data traditionally rely on the assumption that
the proportion $p$ of upper uncensored observations exceeds $1/2,$
corresponding to weak censoring regime. However, this condition excludes many
practical settings characterized by strong censorship, where $p\leq1/2.$ To
address this bothering limitation, we propose a modification that incorporates
a tuning parameter. This parameter, greater than one, assigns appropriate
weights to the estimators, thereby extending the applicability of the method
to the entire censoring range, i.e. $p\in\left(  0,1\right)  .$ Under suitable
regularity conditions, we establish the consistency and asymptotic normality
of the proposed estimators. Extensive simulation studies reveal a clear
improvement over existing methods in terms of bias and mean squared error,
particularly in the strong censoring situation. These results highlight the
significant practical and theoretical impact of our approach, offering a more
flexible and accurate framework for tail index estimation under censoring. The
usefulness of the method is further illustrated through its application to two
real datasets: one on insurance losses (weak censoring) and the other on AIDS
cases (strong censoring).\bigskip

\noindent\textbf{Keywords and phrases:} Extreme value index; Random censoring;
Weak convergence.\medskip

\noindent\textbf{AMC 2020 Subject Classification:} 62G32; 62G05; 62G20.

\vfill

\vfill

\noindent{\small $^{\text{*}}$Corresponding author:
ah.necir@univ-biskra.dz\newline\noindent\textit{E-mail address:}\newline
nourelhouda.guesmia@univ-biskra.dz\texttt{\ }(N.~Guesmia)\newline
djamel.meraghni@univ-biskra.dz (D.~Meraghni)}\newline

\section{\textbf{Introduction\label{sec1}}}

\noindent Right-censored data, characterized by partially observed values,
present a common challenge in statistics. Indeed, this type of data is only
known within a certain range and remains unspecified beyond it. Such data
arise in fields such as actuarial science, survival analysis, and reliability
engineering. Pareto-type distributions are commonly used to model data with
extreme values. Compared to the normal distribution, Pareto-type distributions
are characterized by a significantly higher probability of observing values
far from the mean. These distributions are crucial in applications such as
insurance, finance, and environmental studies, where rare events can have a
substantial impact. Let $X_{1},X_{2},...,X_{n}$ be a sample of size $n\geq1$
from a random variables (rv) $X,$ and let $C_{1},C_{2},...,C_{n}$ be another
sample from a rv $C,$ defined on a probability space $\left(  \Omega
,\mathcal{A},\mathbf{P}\right)  ,$ with continuous cumulative distribution
functions (cdf's) $F$ and $G,$ respectively. Assume that rv's $X$\ and $C$ are
independent. We consider the classical right-censoring framework, where the
variable of interest $X$ is subject to censoring by $C.$ Specifically, for
each observation stage $1\leq j\leq n,$ we observe the pair $\left(
Z_{j},\delta_{j}\right)  $ where $Z_{j}:=\min\left(  X_{j},C_{j}\right)  $ and
$\delta_{j}:=\mathbb{I}_{\left\{  X_{j}\leq C_{j}\right\}  },$ with
$\mathbb{I}_{\left\{  A\right\}  }$ denoting the indicator function of the set
$A.$ The rv $\delta_{j}$ thus indicates whether the $j$-th observation is
uncensored. Throughout the paper, we will use the notation $\overline
{\mathcal{S}}(x):=\mathcal{S}(\infty)-\mathcal{S}(x),$ for any measurable
function $\mathcal{S}.$ We assume that the tail functions $\overline{F}$ and
$\overline{G}$ are regularly varying at infinity with positive tail indices
$\gamma_{1}>0$ and $\gamma_{2}>0$ respectively (or that cdf's $F$ and $G$ are
Pareto-like). In other words, for any $x>0,$ we have%
\begin{equation}
\lim_{t\rightarrow\infty}\frac{\overline{F}(tx)}{\overline{F}(t)}%
=x^{-1/\gamma_{1}}\text{ and }\lim_{t\rightarrow\infty}\frac{\overline{G}%
(tx)}{\overline{G}(t)}=x^{-1/\gamma_{2}}. \label{RV}%
\end{equation}
If we denote the cdf of $Z$ by $H,$ then by the independence between $X$ and
$C,$ we have $\overline{H}=\overline{F}\times\overline{G},$ which yields that
$\overline{H}$ is also regularly varying at infinity, with tail index
$\gamma:=\gamma_{1}\gamma_{2}/\left(  \gamma_{1}+\gamma_{2}\right)  .$ In the
presence of right-censored data with extreme values, several methods have been
proposed for estimating the tail index. Numerous techniques have been
developed to address the specific challenges posed by the nature of such data.
Many studies have focused on modifying traditional tail index estimation
procedures, such as Hill's method. \citep[][]{Hill75}, to make them suitable
for censored data. For instance, \cite{EnFG08} adapted the latter to handle
the characteristics of right-censored data. Their estimator is given by%
\[
\widehat{\gamma}_{1,k}^{\left(  EFG\right)  }:=\frac{\widehat{\gamma}%
_{k}^{\left(  H\right)  }}{\widehat{p}_{k}},
\]
where $\widehat{\gamma}_{k}^{\left(  H\right)  }:=k^{-1}%
{\displaystyle\sum\limits_{i=1}^{k}}
\log\left(  Z_{n-i+1:n}/Z_{n-k:n}\right)  ,$ denotes the popular Hill
estimator of the tail index $\gamma$ corresponding to the complete $Z$-sample
and $\widehat{p}_{k}:=k^{-1}%
{\displaystyle\sum_{i=1}^{k}}
\delta_{\left[  n-i+1:n\right]  },$ stands for an estimator of the proportion
of upper uncensored observations $p:=\gamma_{2}/\left(  \gamma_{1}+\gamma
_{2}\right)  .$ The integer sequence $k=k_{n}$ represents the number of top
order statistics satisfying $k\rightarrow\infty$ and $k/n\rightarrow0$ as
$n\rightarrow\infty.$ The sequence of rv's $Z_{1:n}\leq...\leq Z_{n:n}$
represent the order statistics pertaining to the sample $Z_{1},...,Z_{n}$ and
$\delta_{\left[  1:n\right]  },...,\delta_{\left[  n:n\right]  }$ denote the
corresponding concomitant values, satisfying $\delta_{\left[  j:n\right]
}=\delta_{i}$ for $i$ such that $Z_{j:n}=Z_{i}.$ Useful Gaussian
approximations to $\widehat{p}_{k},$ $\widehat{\gamma}_{k}^{\left(  H\right)
}$ and $\widehat{\gamma}_{1,k}^{\left(  EFG\right)  }$ in terms of sequences
of Brownian bridges are given in \cite{BMN-2015}. On the basis of Kaplan-Meier
integration, \cite{WW2014}, proposed a consistent estimator to $\gamma_{1}$
defined by%
\[
\widehat{\gamma}_{1,k}^{\left(  W\right)  }:=%
{\displaystyle\sum\limits_{i=1}^{k}}
\frac{\delta_{\left[  n-i+1:n\right]  }}{i}\frac{\overline{F}_{n}^{\left(
KM\right)  }\left(  Z_{n-i+1:n}\right)  }{\overline{F}_{n}^{\left(  KM\right)
}\left(  Z_{n-k:n}\right)  }\log\frac{Z_{n-i+1:n}}{Z_{n-k:n}},
\]
where%
\[
F_{n}^{\left(  KM\right)  }\left(  z\right)  :=1-\prod_{Z_{i:n}\leq z}\left(
1-\frac{1}{n-i+1}\right)  ^{\delta_{\left[  i:n\right]  }},
\]
denotes the popular Kaplan-Meier estimator of cdf $F$ \citep[] []{KM58}. The
simulation study given by \cite{BMV2018} showed that $\widehat{\gamma}%
_{1,k}^{\left(  W\right)  }$ overall quite good bias and mean squared error
(MSE) performance compared with $\widehat{\gamma}_{1,k}^{\left(  EFG\right)
}.$ The asymptotic normality of the estimator is established in \cite{BWW2019}
by considering Hall's model \citep[] []{Hall82} which is a sub-class of cdf's
satisfying the second-order condition of regular variation $\left(
\ref{second-orderF}\right)  .$ The bias reduction in tail index estimation for
right censored data is addressed by \cite{BBWG2016}, \cite{BMV2018} and
\cite{BWW2019}. From our perspective, the asymptotic analysis of the
Kaplan-Meier based tail functional is rather intricate and technically
demanding. To address this, a recent study by \cite{MNS2025} employed
Nelson-Aalen integral approach to propose a new estimator for the tail index
$\gamma_{1},$ shown to be asymptotically equivalent to $\widehat{\gamma}%
_{1}^{\left(  W\right)  }$ and defined by%
\[
\widehat{\gamma}_{1,k}^{\left(  MNS\right)  }:=\sum_{i=1}^{k}\frac
{\delta_{\left[  n-i+1:n\right]  }}{i}\frac{\overline{F}_{n}^{\left(
NA\right)  }\left(  Z_{n-i+1:n}\right)  }{\overline{F}_{n}^{\left(  NA\right)
}\left(  Z_{n-k:n}\right)  }\log\frac{Z_{n-i+1:n}}{Z_{n-k:n}},
\]
where%
\[
F_{n}^{\left(  NA\right)  }\left(  z\right)  =1-\prod_{Z_{i:n}\leq z}%
\exp\left(  -\frac{1}{n-i+1}\right)  ^{\delta_{\left[  i:n\right]  }},
\]
denotes the well-known Nelson-Aalen estimator of cdf $F$
\citep[] []{Nelson1972}. Moreover, the authors introduced a Nelson-Aalen tail
empirical process and provided a useful Gaussian approximation for it. Using a
functional representation based on this tail process, they established both
the consistency and asymptotic normality of the estimator $\widehat{\gamma
}_{1,k}^{\left(  MNS\right)  }.$ This approach significantly simplifies the
derivation of the asymptotic behavior of extreme value statistics under
censoring. The authors also demonstrated that, as expected, the asymptotic
bias and variance of $\widehat{\gamma}_{1,k}^{\left(  W\right)  }$ and
$\widehat{\gamma}_{1,k}^{\left(  MNS\right)  }$ are identical. Furthermore,
they observed that both tail index estimators exhibit comparable performance
in terms of bias and MSE. In fact, comparative studies between the
Kaplan-Meier and Nelson-Aalen estimators have concluded that display nearly
identical statistical behavior, see, for instance, \cite{Colosimo2002}.
However, both consistency and asymptotic normality of $\widehat{\gamma}%
_{1,k}^{\left(  W\right)  }$ and $\widehat{\gamma}_{1,k}^{\left(  MNS\right)
}$ are contingent upon the assumption that $p>1/2$ (weak censoring). In
contrast, the adapted Hill estimator $\widehat{\gamma}_{1,k}^{\left(
EFG\right)  }$ retains these asymptotic properties for any $0<p<1.$ To address
this limitation, we propose a modification of the estimator $\widehat{\gamma
}_{1,k}^{\left(  MNS\right)  }$ to incorporate the case $p\leq1/2$ (i.e.
strong censoring) be included too. In Section \ref{sec4}, we analyze two real
datasets: one characterized by weak censoring (insurance data, with an
observed proportion of upper non-censored data $p_{obs}\simeq0.76$ ) and the
other by strong censoring (AIDS data, with $p_{obs}\simeq0.28).$ The issue of
extending tail index estimation to strong censoring scenarios was previously
addressed by \cite{BWW2019} who proposed the following estimator%
\[
\widehat{\gamma}_{1,k}^{\left(  BW\right)  }\left(  \beta\right)
:=\frac{\widehat{T}_{k}\left(  \beta\right)  }{1-\beta\widehat{T}_{k}\left(
\beta\right)  },\text{ for }\gamma_{1}\beta>-1,
\]
where
\[
\widehat{T}_{k}\left(  \beta\right)  :=%
{\displaystyle\sum\limits_{i=2}^{k}}
\frac{\overline{F}_{n}^{\left(  KM\right)  }\left(  Z_{n-i+1:n}\right)
}{\overline{F}_{n}^{\left(  KM\right)  }\left(  Z_{n-k:n}\right)  }\left(
\kappa_{-\beta}\left(  \frac{Z_{n-i+1:n}}{Z_{n-k:n}}\right)  -\kappa_{-\beta
}\left(  \frac{Z_{n-i:n}}{Z_{n-k:n}}\right)  \right)  ,
\]
with $\kappa_{-\beta}\left(  u\right)  :=\int_{1}^{u}t^{-\beta-1}dt,$ for
$u>1,$ being the Box-Cox transform. Considering Hall's model
\citep[][]{Hall82}, the authors established the asymptotic normality of
$\widehat{\gamma}_{1,k}^{\left(  BW\right)  }\left(  \beta\right)  $ provided
that $p\left(  1+\beta\gamma_{1}\right)  >1/2.$ Although the estimator
exhibits a smaller MSE, its construction and asymptotic behavior depend on the
unknown parameters $p$ and $\gamma_{1}.$ As such, selecting an appropriate
value for the tuning parameter $\beta$ may not be straightforward for
practitioners. Therefore, it is more practical to propose an alternative to
$\widehat{\gamma}_{1,k}^{\left(  BW\right)  }\left(  \beta\right)  $ which
does not require any parametrization of $\beta.$ In what follows, we introduce
a new estimation method for $\gamma_{1}$ that addresses this concern. Let us
first point out that the construction of the estimator $\widehat{\gamma}%
_{1,k}^{\left(  MNS\right)  }$ is rooted in the following asymptotic
identity:
\begin{equation}
\mathbf{E}\left[  \log\left(  X/t\right)  \mid X>t\right]  :=\int_{t}^{\infty
}\log\frac{y}{t}d\frac{F\left(  y\right)  }{\overline{F}\left(  t\right)
}\rightarrow\gamma_{1},\text{ as }t\rightarrow\infty, \label{ass1}%
\end{equation}
which is equivalent to%
\[
\int_{1}^{\infty}y^{-1}\frac{\overline{F}\left(  ty\right)  }{\overline
{F}\left(  t\right)  }dy\rightarrow\gamma_{1},\text{ as }t\rightarrow\infty.
\]
To precisely identify the stage at which the condition $p>1/2$ becomes
necessary in the analysis of the asymptotic behavior of the proposed
estimator, we carefully examined the detailed steps in the proof of the
Gaussian approximation to the empirical tail process introduced by the
authors. We ultimately concluded that the integrand function $\log\left(
y/t\right)  ,$ in $\left(  \ref{ass1}\right)  ,$ must be multiplied by the
weight $\left(  \overline{F}\left(  y\right)  /\overline{F}\left(  t\right)
\right)  ^{\beta/p-1}$ for some suitable constant $\beta>1,$ so that
\begin{align}
&  \left(  \beta/p\right)  ^{2}\mathbf{E}\left[  \left(  \frac{\overline
{F}\left(  y\right)  }{\overline{F}\left(  t\right)  }\right)  ^{\beta
/p-1}\log\left(  X/t\right)  \mid X>t\right] \label{ass2}\\
&  =\left(  \beta/p\right)  ^{2}\int_{t}^{\infty}\left(  \frac{\overline
{F}\left(  y\right)  }{\overline{F}\left(  t\right)  }\right)  ^{\beta
/p-1}\log\frac{y}{t}d\frac{F\left(  y\right)  }{\overline{F}\left(  t\right)
}\rightarrow\gamma_{1},\text{ as }t\rightarrow\infty,
\end{align}
which is equivalent to
\begin{equation}
\left(  \beta/p\right)  \int_{1}^{\infty}y^{-1}\left(  \frac{\overline
{F}\left(  ty\right)  }{\overline{F}\left(  t\right)  }\right)  ^{\beta
/p}dy\rightarrow\gamma_{1},\text{ as }t\rightarrow\infty, \label{ass2bis}%
\end{equation}
see Proposition \ref{prop2} given in Section \ref{Appendix A}. It is clear
that both statements $\left(  \ref{ass1}\right)  $ and $\left(  \ref{ass2}%
\right)  $ coincide when the tuning parameter $\beta$ is equal to $p.$ Next,
we observe that assertion $\left(  \ref{ass2}\right)  $ indeed yields a
Hill-type estimator to $\gamma_{1}$ which is applicable for any $p\in\left(
0,1\right)  $ provided that the tuning parameter $\beta$ be greater than $1,$
see Section \ref{sec2} for further details. Now, we proceed to define the
weighted Nelson-Aalen integral estimator of the tail index $\gamma_{1}.$ In
$\left(  \ref{ass2}\right)  ,$ letting $t=Z_{n-k:n}$ \ and substituting $F$
and $p$ by $F_{n}^{\left(  NA\right)  }$ and $\widehat{p}_{k}$ respectively
yield the following estimator to $\gamma_{1}:$%
\begin{equation}
\widehat{\gamma}_{1,k}^{\left(  NA\right)  }\left(  \beta\right)  :=\left(
\beta/\widehat{p}_{k}\right)  ^{2}\int_{Z_{n-k:n}}^{\infty}\left(
\frac{\overline{F}_{n}^{\left(  NA\right)  }\left(  y\right)  }{\overline
{F}_{n}^{\left(  NA\right)  }\left(  Z_{n-k:n}\right)  }\right)
^{\beta/\widehat{p}_{k}-1}\log\frac{y}{Z_{n-k:n}}d\frac{F_{n}^{\left(
NA\right)  }\left(  y\right)  }{\overline{F}_{n}^{\left(  NA\right)  }\left(
Z_{n-k:n}\right)  }. \label{f-form}%
\end{equation}
Let us now express the previous integral in a more tractable form, as a sum of
terms derived from the $Z$-sample. To this end, we use the following crucial
representation of cdf $F$ in terms of estimable functions $H$ and $H^{\left(
1\right)  }:$%
\[
\int_{0}^{z}\frac{dH^{\left(  1\right)  }\left(  y\right)  }{\overline
{H}\left(  y-\right)  }=\int_{0}^{z}\frac{dF\left(  y\right)  }{\overline
{F}\left(  y\right)  }=-\log\overline{F}\left(  z\right)  ,\text{ for }z>0,
\]
where $H^{\left(  1\right)  }\left(  z\right)  :=\mathbf{P}\left(  Z\leq
z,\delta=1\right)  =\int_{0}^{z}\overline{G}\left(  y\right)  dF\left(
y\right)  $ and $\overline{H}\left(  y-\right)  :=\lim_{\epsilon\downarrow0}$
$\overline{H}\left(  y-\epsilon\right)  .$ This implies that%
\begin{equation}
\overline{F}\left(  z\right)  =\exp\left\{  -\int_{0}^{z}\frac{dH^{\left(
1\right)  }\left(  y\right)  }{\overline{H}\left(  y-\right)  }\right\}
,\text{ for }z>0. \label{F-formula}%
\end{equation}
The empirical counterparts of cdf $H$ and sub-distribution $H^{\left(
1\right)  }$ are given by
\[
H_{n}\left(  z\right)  :=n^{-1}\sum_{i=1}^{n}\mathbb{I}_{\left\{  Z_{i:n}\leq
z\right\}  }\text{ and }H_{n}^{\left(  1\right)  }\left(  z\right)
:=n^{-1}\sum_{i=1}^{n}\delta_{\left[  i:n\right]  }\mathbb{I}_{\left\{
Z_{i:n}\leq z\right\}  },
\]
see, for instance, \cite{SW86} page 294. Note that $\overline{H}_{n}\left(
Z_{n:n}\right)  =0,$ for this reason and to avoid dividing by zero, we use
$\overline{H}\left(  x-\right)  $ instead of $\overline{H}\left(  x\right)  $
in formula $\left(  \ref{F-formula}\right)  .$ Nelson-Aalen estimator
$F_{n}^{\left(  NA\right)  }\left(  z\right)  $ is obtained by substituting
both $H_{n}$ and $H_{n}^{\left(  1\right)  }$ for $H$ and $H^{\left(
1\right)  }$ respectively in the right-hand side of formula $\left(
\ref{F-formula}\right)  .$ That is, we have
\begin{equation}
\overline{F}_{n}^{\left(  NA\right)  }\left(  z\right)  :=\exp\left\{
-\int_{0}^{z}\frac{dH_{n}^{\left(  1\right)  }\left(  y\right)  }{\overline
{H}_{n}\left(  y-\right)  }\right\}  . \label{emp-F-formula}%
\end{equation}
Note that, since $\overline{H}_{n}\left(  Z_{i:n}-\right)  =\overline{H}%
_{n}\left(  Z_{i-1:n}\right)  =\left(  n-i+1\right)  /n,$ then the previous
integral leads to the expression of $F_{n}^{\left(  NA\right)  }\left(
z\right)  $ given above. Differentiating both sides of $\left(
\ref{emp-F-formula}\right)  $ yields that%
\[
dF_{n}^{\left(  NA\right)  }\left(  z\right)  =\frac{\overline{F}_{n}^{\left(
NA\right)  }\left(  z\right)  }{\overline{H}_{n}\left(  z-\right)  }%
dH_{n}^{\left(  1\right)  }\left(  z\right)  .
\]
By using this in $\left(  \ref{f-form}\right)  ,$ we end up with%
\[
\widehat{\gamma}_{1,k}^{\left(  NA\right)  }\left(  \beta\right)  =\left(
\beta/\widehat{p}_{k}\right)  ^{2}\sum_{i=1}^{k}\frac{\delta_{\left[
n-i+1:n\right]  }}{i}\left(  \frac{\overline{F}_{n}^{\left(  NA\right)
}\left(  Z_{n-i+1:n}\right)  }{\overline{F}_{n}^{\left(  NA\right)  }\left(
Z_{n-k:n}\right)  }\right)  ^{\beta/\widehat{p}_{k}}\log\frac{Z_{n-i+1:n}%
}{Z_{n-k:n}}.
\]
It is worth mentioning that $\widehat{\gamma}_{1,k}^{\left(  NA\right)
}\left(  \beta\right)  $ works even for $0<\beta<1$ but for $p>1/2.$\medskip

\noindent On replacing $\overline{F}_{n}^{\left(  NA\right)  }$ by
$\overline{F}_{n}^{\left(  KM\right)  },$ we define the weighted Kaplan-Meier
integral estimator of the tail index $\gamma_{1}:$%
\[
\widehat{\gamma}_{1,k}^{\left(  KM\right)  }\left(  \beta\right)  :=\left(
\beta/\widehat{p}_{k}\right)  ^{2}\sum_{i=1}^{k}\frac{\delta_{\left[
n-i+1:n\right]  }}{i}\left(  \frac{\overline{F}_{n}^{\left(  KM\right)
}\left(  Z_{n-i+1:n}\right)  }{\overline{F}_{n}^{\left(  KM\right)  }\left(
Z_{n-k:n}\right)  }\right)  ^{\beta/\widehat{p}_{k}}\log\frac{Z_{n-i+1:n}%
}{Z_{n-k:n}},
\]
which meets Worms's estimator $\widehat{\gamma}_{1,k}^{\left(  W\right)  }$
for $\beta=\widehat{p}_{k}.$ To derive more tractable expressions for
$\widehat{\gamma}_{1,k}^{\left(  NA\right)  }\left(  \beta\right)  $ and
$\widehat{\gamma}_{1,k}^{\left(  KM\right)  }\left(  \beta\right)  $ solely in
terms of the pairs $\left(  Z_{j},\delta_{j}\right)  ,$ we make use of the
following two expressions:%
\[
\frac{\overline{F}_{n}^{\left(  NA\right)  }\left(  Z_{n-i+1:n}\right)
}{\overline{F}_{n}^{\left(  NA\right)  }\left(  Z_{n-k:n}\right)  }%
=\prod_{j=i+1}^{k}\exp\left(  -\frac{\delta_{\left[  n-j+1:n\right]  }}%
{j}\right)  ,
\]
and%
\[
\frac{\overline{F}_{n}^{\left(  KM\right)  }\left(  Z_{n-i+1:n}\right)
}{\overline{F}_{n}^{\left(  KM\right)  }\left(  Z_{n-k:n}\right)  }%
=\prod_{j=i+1}^{k}\left(  1-\frac{\delta_{\left[  n-j+1:n\right]  }}%
{j}\right)  .
\]
Thereby, we get%
\[
\widehat{\gamma}_{1,k}^{\left(  NA\right)  }\left(  \beta\right)  =\left(
\beta/\widehat{p}_{k}\right)  ^{2}\sum_{i=1}^{k}\frac{\delta_{\left[
n-i+1:n\right]  }}{i}\prod_{j=i+1}^{k}\exp\left(  -\frac{\beta}{\widehat{p}%
_{k}}\frac{\delta_{\left[  n-j+1:n\right]  }}{j}\right)  \log\frac
{Z_{n-i+1:n}}{Z_{n-k:n}},
\]
and%
\[
\widehat{\gamma}_{1,k}^{\left(  KM\right)  }\left(  \beta\right)  =\left(
\beta/\widehat{p}_{k}\right)  ^{2}\sum_{i=1}^{k}\frac{\delta_{\left[
n-i+1:n\right]  }}{i}\prod_{j=i+1}^{k}\left(  1-\frac{\delta_{\left[
n-j+1:n\right]  }}{j}\right)  ^{\beta/\widehat{p}_{k}}\log\frac{Z_{n-i+1:n}%
}{Z_{n-k:n}}.
\]
In Proposition \ref{prop7} of Section \ref{Appendix A}, we establish a useful
result concerning the connection between $\overline{F}_{n}^{\left(  KM\right)
}$ and $\overline{F}_{n}^{\left(  NA\right)  },$ in the sense that:%
\[
\sup_{0<x\leq Z_{n:n}}\left\vert \frac{\overline{F}_{n}^{\left(  KM\right)
}\left(  x\right)  }{\overline{F}_{n}^{\left(  NA\right)  }\left(  x\right)
}-1\right\vert =O_{\mathbf{P}}\left(  n^{-1}\right)  ,\text{ as }%
n\rightarrow\infty,
\]
which implies that, for every $\beta>0,$ we have%
\[
\widehat{\gamma}_{1,k}^{\left(  KM\right)  }\left(  \beta\right)  =\left(
1+O_{\mathbf{P}}\left(  n^{-1}\right)  \right)  \widehat{\gamma}%
_{1,k}^{\left(  NA\right)  }\left(  \beta\right)  ,\text{ as }n\rightarrow
\infty.
\]
This, amongst others, allows the derivation of the limit theorems of
$\widehat{\gamma}_{1,k}^{\left(  KM\right)  }\left(  \beta\right)  $ from
those of $\widehat{\gamma}_{1,k}^{\left(  NA\right)  }\left(  \beta\right)  ,$
and vice versa, without requiring the assumption of Hall's model. To
streamline the exposition, we use the generic notation:%
\[
\widehat{\gamma}_{1,k}^{\left(  \bullet\right)  }\left(  \beta\right)
:=\left\{  \widehat{\gamma}_{1,k}^{\left(  NA\right)  }\left(  \beta\right)
\text{ or }\widehat{\gamma}_{1,k}^{\left(  KM\right)  }\left(  \beta\right)
\right\}  .
\]
Assuming the first- and second-order conditions of regular variation, rather
than relying on Hall's model, we establish in Section \ref{sec2}, the
consistency and asymptotic normality of proposed estimator $\widehat{\gamma
}_{1,k}^{\left(  \bullet\right)  }\left(  \beta\right)  $ for every $0<p<1.$
Furthermore, in Section \ref{sec3}, we show that $\widehat{\gamma}%
_{1,k}^{\left(  \bullet\right)  }\left(  \beta\right)  $ outperforms the
adapted Hill-type estimator $\widehat{\gamma}_{1,k}^{\left(  EFG\right)  }$ in
both weak and strong censoring scenarios, particularly in terms of bias and
MSE, provided that the tuning parameter $\beta$ is selected slightly greater
than one, namely $\beta=1.01$ or $\beta=1.001$ for instance.$\medskip$

\noindent The remainder of the paper is organized as follows. In Section
\ref{sec2}, we present our main results namely the consistency and asymptotic
normality of the proposed estimator, whose proofs are postponed to Section
\ref{sec5}. In Section \ref{sec3} illustrates, through a simulation study, the
finite sample behavior of our estimator with a comparison with already
existing ones. In Section \ref{sec4}, we apply our results to real-life
datasets of insurance losses and AIDS disease. Finally, some auxiliary results
and the figures related to the simulation study are gathered in Sections
\ref{Appendix A} and \ref{Appendix B} respectively.

\section{\textbf{Main results\label{sec2}}}

\begin{theorem}
\textbf{\label{theorem 1}}Assume that cdf's $F$ and $G$ satisfy the
first-order condition of regular variation $\left(  \ref{RV}\right)  ,$ and
let $k=k_{n}$ be a sequence of integers such that $k\rightarrow\infty$ and
$k/n\rightarrow0.$ Then for every $\beta>1,$ we have%
\begin{equation}
\widehat{\gamma}_{1,k}^{\left(  \bullet\right)  }\left(  \beta\right)
\overset{\mathbf{P}}{\rightarrow}\gamma_{1},\text{ as }n\rightarrow\infty.
\label{consist}%
\end{equation}

\end{theorem}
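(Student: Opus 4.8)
The plan is to reduce the statement to the Nelson--Aalen estimator and then to exploit the integral representation $(\ref{f-form})$. By Proposition $\ref{prop7}$ we have $\widehat{\gamma}_{1,k}^{(KM)}(\beta)=(1+O_{\mathbf{P}}(n^{-1}))\,\widehat{\gamma}_{1,k}^{(NA)}(\beta)$, so it suffices to prove $(\ref{consist})$ for $\widehat{\gamma}_{1,k}^{(NA)}(\beta)$. I would first recall the classical consistency $\widehat{p}_{k}\overset{\mathbf{P}}{\rightarrow}p$ (see \cite{EnFG08}, \cite{BMN-2015}); since the tuning constant satisfies $\beta>1>p$ for every admissible $p\in(0,1)$, this yields $\alpha_{n}:=\beta/\widehat{p}_{k}\overset{\mathbf{P}}{\rightarrow}\alpha:=\beta/p>1$ and, in particular, $\alpha_{n}>1$ with probability tending to one — it is precisely this uniform-in-$p$ separation that the condition $\beta>1$ buys. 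Writing $s_{n}:=Z_{n:n}/Z_{n-k:n}$ and $e_{n}(s):=\overline{F}_{n}^{(NA)}(Z_{n-k:n}\,s)/\overline{F}_{n}^{(NA)}(Z_{n-k:n})$ for $s\geq1$, I would then apply to $(\ref{f-form})$ an integration by parts — the empirical analogue of the passage from $(\ref{ass2})$ to $(\ref{ass2bis})$ recorded in Proposition $\ref{prop2}$, with the jump corrections of the step function $F_{n}^{(NA)}$ controlled as in \cite{MNS2025} for the unweighted case — to obtain the working identity
\[
\widehat{\gamma}_{1,k}^{(NA)}(\beta)=\alpha_{n}\int_{1}^{s_{n}}s^{-1}\bigl(e_{n}(s)\bigr)^{\alpha_{n}}\,ds+r_{n},
\]
where $r_{n}$ is a boundary remainder produced at $s=s_{n}$. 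The matching deterministic target is $\alpha\int_{1}^{\infty}s^{-1}(s^{-1/\gamma_{1}})^{\alpha}\,ds=\gamma_{1}$, which, reassuringly, does not depend on $\alpha$, reflecting the fact that the weight does not perturb the limit.

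It then remains to show that $\int_{1}^{s_{n}}s^{-1}(e_{n}(s))^{\alpha_{n}}\,ds\overset{\mathbf{P}}{\rightarrow}\int_{1}^{\infty}s^{-1-\alpha/\gamma_{1}}\,ds$ and that $r_{n}=o_{\mathbf{P}}(1)$. For the first I would split the integral at a fixed $T>1$. On $[1,T]$ I would use the uniform consistency $\sup_{1\leq s\leq T}|e_{n}(s)-s^{-1/\gamma_{1}}|\overset{\mathbf{P}}{\rightarrow}0$, which holds under the first-order condition alone: it follows by combining a consistency version of the Nelson--Aalen tail empirical process result of \cite{MNS2025} on $[Z_{n-k:n},Z_{n-k:n}T]$ with the facts that $Z_{n-k:n}\rightarrow\infty$ (intermediate order statistic) and that, by the Karamata uniform convergence theorem, $\overline{F}(Z_{n-k:n}s)/\overline{F}(Z_{n-k:n})\rightarrow s^{-1/\gamma_{1}}$ uniformly on $[1,T]$. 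Together with $\alpha_{n}\overset{\mathbf{P}}{\rightarrow}\alpha$, the integrand $e_{n}(s)^{\alpha_{n}}$ then converges to $s^{-\alpha/\gamma_{1}}$ uniformly in probability on the bounded interval $[1,T]$, whence $\alpha_{n}\int_{1}^{T}s^{-1}(e_{n}(s))^{\alpha_{n}}\,ds\overset{\mathbf{P}}{\rightarrow}\alpha\int_{1}^{T}s^{-1-\alpha/\gamma_{1}}\,ds$, and the right-hand side converges to $\gamma_{1}$ as $T\rightarrow\infty$.

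The delicate and decisive step is to control the far-tail contribution $\alpha_{n}\int_{T}^{s_{n}}s^{-1}(e_{n}(s))^{\alpha_{n}}\,ds$ uniformly in $n$, that is, to show that the probability that it exceeds any $\varepsilon>0$ tends to $0$ as $n\rightarrow\infty$ and then $T\rightarrow\infty$. Here one needs an upper bound on $e_{n}(s)$ — equivalently on the Nelson--Aalen tail empirical process — valid over the whole intermediate range $[1,s_{n}]$ and not only on compacts; such a bound is supplied by \cite{MNS2025}, or, for a self-contained treatment, by Potter-type inequalities applied to $\overline{F}$ together with a crude moment bound on the tail process, which is enough for consistency. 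The key point is that the surplus factor $(e_{n}(s))^{\alpha_{n}-1}$, with exponent $\alpha_{n}-1>0$ guaranteed by $\beta>1$, damps the far tail enough that, after the change of variables $s=s_{n}^{\theta}$ (to leading order $s_{n}\asymp k^{\gamma}$ and $e_{n}(s_{n}^{\theta})\asymp k^{-p\theta}$ in probability), the integral over $[T,s_{n}]$ is $O_{\mathbf{P}}(T^{-\beta/\gamma})$ uniformly in $n$; this is exactly where the hypothesis $\beta>1$ is spent, in contrast to the unweighted estimator which, lacking this damping, requires $p>1/2$. Finally, $r_{n}=o_{\mathbf{P}}(1)$ follows from a direct computation using the product form $e_{n}(s_{n})=\prod_{j=1}^{k}\exp(-\delta_{[n-j+1:n]}/j)$, which makes $r_{n}$ of order $\log(Z_{n:n}/Z_{n-k:n})\,(e_{n}(s_{n}))^{\alpha_{n}}=o_{\mathbf{P}}(1)$. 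The main obstacle is thus concentrated entirely in this uniform far-tail estimate, where the interplay between the weight exponent $\beta/p-1$ and the growth of the Nelson--Aalen tail process must be balanced.
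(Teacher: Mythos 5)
Your overall strategy is the paper's: reduce the Kaplan--Meier case to the Nelson--Aalen case via Proposition \ref{prop7}, pass to the integral representation $\left(\beta/\widehat{p}_{k}\right)\int_{1}^{\infty}x^{-1}\bigl(\overline{F}_{n}^{(NA)}(Z_{n-k:n}x)/\overline{F}_{n}^{(NA)}(Z_{n-k:n})\bigr)^{\beta/\widehat{p}_{k}}dx$, import a uniform-in-$x$ approximation of the Nelson--Aalen tail ratio from \cite{MNS2025}, and spend the hypothesis $\beta>1$ on the integrability of the weighted error terms. You also correctly diagnose that $\beta>1$ is what replaces $p>1/2$. The difference is organizational (truncation at a fixed $T$ versus the paper's decomposition into $T_{n1},\dots,T_{n4}$), not conceptual.

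The gap is that the one step you yourself call ``delicate and decisive'' --- the uniform far-tail bound on $\int_{T}^{s_{n}}s^{-1}(e_{n}(s))^{\alpha_{n}}ds$ --- is asserted via the heuristic $e_{n}(s_{n}^{\theta})\asymp k^{-p\theta}$ and a claimed $O_{\mathbf{P}}(T^{-\beta/\gamma})$, which only accounts for the deterministic leading order $e_{n}(s)\approx s^{-p/\gamma}$ and ignores the stochastic fluctuation, which is exactly what makes the step hard when $p\leq 1/2$. What the paper actually uses is Proposition \ref{prop3}$(iii)$: uniformly over $x>1$, $e_{n}(x)=x^{-p/\gamma}\bigl(1+o_{\mathbf{P}}(x^{2\eta/\gamma-\epsilon})\bigr)$ for every $\eta\in(1/4,1/2)$, a weighted bound inherited from the Gaussian approximation $(\ref{Gauss})$ of \cite{MNS2025}; the multiplicative error \emph{grows} in $x$ at rate $x^{2\eta/\gamma}$, and the proof consists precisely of the exponent bookkeeping showing that after weighting the resulting integrands are $x^{-1+(2\eta-\beta)/\gamma-\epsilon}$ (linear error term) and $x^{-1-(\beta-4\eta)/\gamma-2\epsilon}$ (quadratic/mean-value remainder), integrable once $\eta$ is chosen close enough to $1/4$ that $4\eta<\beta$ --- this is the only place $\beta>1$ (rather than $\beta>1/2$) is genuinely needed. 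Without exhibiting this balance your argument does not establish the claim; a ``crude moment bound'' will not deliver a bound uniform over the whole range $[T,s_{n}]$ with the correct polynomial growth. A second, smaller omission: on the unbounded domain you cannot simply say $\alpha_{n}\to\alpha$ and substitute; the random exponent $\beta/\widehat{p}_{k}$ interacts with arbitrarily large $x$, and the paper needs a separate mean-value-theorem argument (its term $T_{n3}$, producing the factor $\log(e_{n}(x))^{\beta/p}$ and another integrability check) to control it.
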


\noindent Since weak approximations of statistics in extreme value theory are
typically obtained under a second-order framework
\citep[see, e.g., page 48  in] []{deHF06}, then it is natural to assume that
cdf $F$ satisfies the well-known second-order condition of regular variation.
That is, we assume that, for any $x>0,$ we have%
\[
\lim_{t\rightarrow\infty}\frac{U_{F}\left(  tx\right)  /U_{F}\left(  t\right)
-x^{\gamma_{1}}}{A_{1}^{\ast}\left(  t\right)  }=x^{\gamma_{1}}\dfrac
{x^{\tau_{1}}-1}{\tau_{1}},
\]
or equivalently%
\begin{equation}
\lim_{t\rightarrow\infty}\frac{\overline{F}\left(  tx\right)  /\overline
{F}\left(  t\right)  -x^{-1/\gamma_{1}}}{A_{1}\left(  t\right)  }%
=x^{-1/\gamma_{1}}\frac{x^{\tau_{1}/\gamma_{1}}-1}{\tau_{1}\gamma_{1}},
\label{second-orderF}%
\end{equation}
where $\tau_{1}\leq0$\ is the second-order parameter,and $A_{1}\left(
t\right)  :=A_{1}^{\ast}\left(  1/\overline{F}\left(  t\right)  \right)  $
with $A_{1}^{\ast}$ being a function tending to $0,$ not changing sign near
infinity and having a regularly varying absolute value at infinity with index
$\tau_{1}.$\ If $\tau_{1}=0,$ interpret $\left(  x^{\tau_{1}}-1\right)
/\tau_{1}$ as $\log x.$ The functions $L^{\leftarrow}\left(  s\right)
:=\inf\left\{  x:L\left(  x\right)  \geq s\right\}  ,$ for $0<s<1,$ and
$U_{L}\left(  t\right)  :=L^{\leftarrow}\left(  1-1/t\right)  ,$ $t>1,$
respectively stand for the quantile and tail quantile functions of a cdf $L.$
This condition is satisfied by a broad class of commonly used distributions,
including Burr, F\'{e}chet, GPD, GEV and Student distributions, all of which
fall under Hall's model. However, the modified Pareto distribution (with a
logarithmic term) $F\left(  x\right)  =1-x^{-1/\gamma_{1}}\left(  1+1/\log
x\right)  ,$ $x>e$ satisfies $\left(  \ref{second-orderF}\right)  ,$ with
$A_{1}\left(  t\right)  =1/\log t,$ yet it does not belong to Hall's class.

\begin{theorem}
\textbf{\label{theorem 2}}In addition to the assumptions of Theorem
$\ref{theorem 1},$ suppose that the second-order condition of regular
variation $\left(  \ref{second-orderF}\right)  $ holds and let $h:=U_{H}%
\left(  n/k\right)  .$ If $\sqrt{k}A_{1}\left(  h\right)  =O\left(  1\right)
,$ then on the probability space $\left(  \Omega,\mathcal{A},\mathbf{P}%
\right)  ,$ there exists a sequence of standard Wiener processes $\left\{
W_{n}\left(  s\right)  ;\text{ }0<s<1\right\}  _{n\geq1}$ such that, for every
$\beta>1,$ we have%
\[
\sqrt{k}\left(  \widehat{\gamma}_{1,k}^{\left(  \bullet\right)  }\left(
\beta\right)  -\gamma_{1}\right)  =\gamma_{1}N_{n}\left(  \beta\right)
+\frac{\beta}{\beta-p\tau_{1}}\sqrt{k}A_{1}\left(  h\right)  +o_{\mathbf{P}%
}\left(  1\right)  ,\text{ as }n\rightarrow\infty,
\]
where%
\begin{equation}
N_{n}\left(  \beta\right)  :=\frac{\beta\left(  p+\beta-1\right)  }{p}\int%
_{0}^{1}s^{\beta-2}\mathbf{W}_{n,1}\left(  s\right)  ds+\frac{\beta}%
{p}\mathbf{W}_{n,1}\left(  1\right)  +\beta\int_{0}^{1}s^{\beta-2}%
\mathbf{W}_{n,2}\left(  s\right)  ds, \label{Gaussian-app}%
\end{equation}
is a sequence of centred Gaussian rv's, with%
\[
\mathbf{W}_{n,1}\left(  s\right)  :=W_{n}\left(  \theta\right)  -W_{n}\left(
\theta-ps\right)  ,\text{ for }0<s\leq\theta/p\text{,}%
\]
and%
\[
\mathbf{W}_{n,2}\left(  s\right)  :=W_{n}\left(  1\right)  -W_{n}\left(
1-qs\right)  ,\text{ for }0\leq s\leq1,
\]
being two independent centred Gaussian processes, with$\ \theta:=H^{\left(
1\right)  }\left(  \infty\right)  $ and $q:=1-p.$ Whenever $\sqrt{k}%
A_{1}\left(  h\right)  \rightarrow\lambda<\infty,$ then%
\begin{equation}
\sqrt{k}\left(  \widehat{\gamma}_{1,k}^{\left(  \bullet\right)  }\left(
\beta\right)  -\gamma_{1}\right)  \overset{\mathcal{D}}{\rightarrow
}\mathcal{N}\left(  \mu_{\beta},\sigma_{\beta}^{2}\right)  ,\text{ as
}n\rightarrow\infty, \label{normality}%
\end{equation}
where%
\[
\mu_{\beta}:=\frac{\lambda\beta}{\beta-p\tau_{1}}\text{ and }\sigma_{\beta
}^{2}:=\frac{1}{p}\frac{\beta^{2}}{2\beta-1}\gamma_{1}^{2}.
\]

\end{theorem}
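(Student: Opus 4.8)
The plan is to reduce the statement to the Nelson--Aalen version, rewrite that estimator through an integration by parts, and then feed into the resulting functional the Gaussian approximation of the Nelson--Aalen tail empirical process of \cite{MNS2025}. First I would invoke Proposition \ref{prop7}, which gives $\widehat{\gamma}_{1,k}^{\left( KM\right) }\left( \beta\right) =\left( 1+O_{\mathbf{P}}\left( n^{-1}\right) \right) \widehat{\gamma}_{1,k}^{\left( NA\right) }\left( \beta\right) $; since Theorem \ref{theorem 1} yields $\widehat{\gamma}_{1,k}^{\left( NA\right) }\left( \beta\right) =O_{\mathbf{P}}\left( 1\right) $ and $\sqrt{k}/n\rightarrow0$, the two estimators differ by $o_{\mathbf{P}}\left( k^{-1/2}\right) $ and it suffices to treat $\widehat{\gamma}_{1,k}^{\left( NA\right) }\left( \beta\right) $. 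Writing $R_{n}\left( x\right) :=\overline{F}_{n}^{\left( NA\right) }\left( Z_{n-k:n}x\right) /\overline{F}_{n}^{\left( NA\right) }\left( Z_{n-k:n}\right) $ for $x\geq1$, so that $dF_{n}^{\left( NA\right) }\left( Z_{n-k:n}x\right) /\overline{F}_{n}^{\left( NA\right) }\left( Z_{n-k:n}\right) =-dR_{n}\left( x\right) $, I would integrate by parts over $\left[ 1,Z_{n:n}/Z_{n-k:n}\right] $ (beyond which $R_{n}$ is constant) to obtain
\[
\widehat{\gamma}_{1,k}^{\left( NA\right) }\left( \beta\right) =\frac{\beta}{\widehat{p}_{k}}\int_{1}^{\infty}R_{n}\left( x\right) ^{\beta/\widehat{p}_{k}}\frac{dx}{x}-\frac{\beta}{\widehat{p}_{k}}\left( \log\frac{Z_{n:n}}{Z_{n-k:n}}\right) R_{n}\!\left( \frac{Z_{n:n}}{Z_{n-k:n}}\right) ^{\beta/\widehat{p}_{k}}.
\]
From the product form of $\overline{F}_{n}^{\left( NA\right) }$, $\widehat{p}_{k}\overset{\mathbf{P}}{\rightarrow}p$ and $\sum_{j=1}^{k}\delta_{\left[ n-j+1:n\right] }/j=p\log k\left( 1+o_{\mathbf{P}}\left( 1\right) \right) $, the last factor is $O_{\mathbf{P}}\left( k^{-\beta}\right) $ while $\log\left( Z_{n:n}/Z_{n-k:n}\right) =O_{\mathbf{P}}\left( \log k\right) $, so the boundary term is $O_{\mathbf{P}}\left( k^{-\beta}\log k\right) =o_{\mathbf{P}}\left( k^{-1/2}\right) $ because $\beta>1$; this is exactly the step at which the weak--censoring restriction $p>1/2$ of \cite{WW2014,MNS2025} (where the exponent is $p$ in place of $\beta$) is lifted.

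The core is then the analysis of $\left( \beta/\widehat{p}_{k}\right) \int_{1}^{\infty}R_{n}\left( x\right) ^{\beta/\widehat{p}_{k}}dx/x$. Using $\log\overline{F}_{n}^{\left( NA\right) }\left( \cdot\right) =-\int_{0}^{\cdot}dH_{n}^{\left( 1\right) }\left( y\right) /\overline{H}_{n}\left( y-\right) $ together with $-\log\overline{F}\left( \cdot\right) =\int_{0}^{\cdot}dH^{\left( 1\right) }\left( y\right) /\overline{H}\left( y\right) $, I would linearise $\log R_{n}\left( x\right) $ around $\log\bigl( \overline{F}\left( hx\right) /\overline{F}\left( h\right) \bigr) $ through the increments of $H_{n}^{\left( 1\right) }$ and $\overline{H}_{n}$, simultaneously replacing the random threshold $Z_{n-k:n}$ by $h$ and expanding $\beta/\widehat{p}_{k}=\beta/p-\left( \beta/p^{2}\right) \left( \widehat{p}_{k}-p\right) +o_{\mathbf{P}}\left( k^{-1/2}\right) $. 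The key input is the Gaussian approximation of \cite{MNS2025}: on $\left( \Omega,\mathcal{A},\mathbf{P}\right) $ there is a Wiener process $W_{n}$ for which, with a weight ensuring a bound uniform up to $x=\infty$,
\[
\sqrt{k}\left( R_{n}\left( x\right) ^{\beta/\widehat{p}_{k}}-x^{-\beta/(p\gamma_{1})}\right) =x^{-\beta/(p\gamma_{1})}\bigl( \mathcal{G}_{n}\left( x\right) +b_{\beta}\left( x\right) \sqrt{k}A_{1}\left( h\right) \bigr) +o_{\mathbf{P}}\left( 1\right) ,
\]
where $\mathcal{G}_{n}$ is a linear functional of $\mathbf{W}_{n,1},\mathbf{W}_{n,2}$ and of $\sqrt{k}\left( \widehat{p}_{k}-p\right) $ (the latter Gaussian--approximated in the same $W_{n}$, cf. \cite{BMN-2015}) and $b_{\beta}$ carries the second--order term of $\left( \ref{second-orderF}\right) $. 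Integrability of $x^{-\beta/(p\gamma_{1})}$ against $dx/x$, needed uniformly, is precisely what $\beta>1>\widehat{p}_{k}$ (hence $\beta/\widehat{p}_{k}>1$) guarantees.

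Plugging this into $\left( \beta/\widehat{p}_{k}\right) \int_{1}^{\infty}R_{n}\left( x\right) ^{\beta/\widehat{p}_{k}}dx/x$ and integrating term by term --- interchanging $\int dx/x$ with the limiting process via dominated convergence with the uniform weight as envelope, and controlling the $o_{\mathbf{P}}$ remainder uniformly on $\left[ 1,\infty\right) $ --- yields three pieces: the deterministic leading term $\left( \beta/p\right) \int_{1}^{\infty}x^{-\beta/(p\gamma_{1})}dx/x=\gamma_{1}$; the deterministic bias, equal by a second--order refinement of the computation behind $\left( \ref{ass2bis}\right) $ (cf. Proposition \ref{prop2}) to $\dfrac{\beta}{\beta-p\tau_{1}}A_{1}\left( h\right) $; and the stochastic part, which after the substitution $x=s^{-\gamma}$ (so $dx/x=\gamma\,ds/s$ and $x^{-\beta/(p\gamma_{1})}=s^{\beta}$, using $\gamma/\gamma_{1}=p$) and one integration by parts transferring a derivative onto $s^{\beta-1}$ collapses to $\gamma_{1}N_{n}\left( \beta\right) $ of $\left( \ref{Gaussian-app}\right) $ --- the three summands being the Nelson--Aalen integral fluctuation (weight $s^{\beta-2}$, constant $\beta\left( p+\beta-1\right) /p$), the joint threshold / $\widehat{p}_{k}$ contribution (the $\left( \beta/p\right) \mathbf{W}_{n,1}\left( 1\right) $ boundary term) and the censoring--part fluctuation $\beta\int_{0}^{1}s^{\beta-2}\mathbf{W}_{n,2}\left( s\right) ds$. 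This gives the first displayed assertion. For the central limit theorem, when $\sqrt{k}A_{1}\left( h\right) \rightarrow\lambda$ the bias tends to $\lambda\beta/\left( \beta-p\tau_{1}\right) =\mu_{\beta}$, and $N_{n}\left( \beta\right) $, a fixed continuous linear functional of the independent centred Gaussian processes $\mathbf{W}_{n,1},\mathbf{W}_{n,2}$, is centred Gaussian; its variance is computed from $\mathbf{E}\left[ \mathbf{W}_{n,1}\left( s\right) \mathbf{W}_{n,1}\left( s^{\prime}\right) \right] =p\min\left( s,s^{\prime}\right) $, $\mathbf{E}\left[ \mathbf{W}_{n,2}\left( s\right) \mathbf{W}_{n,2}\left( s^{\prime}\right) \right] =q\min\left( s,s^{\prime}\right) $ and $\mathbf{W}_{n,1}\perp\mathbf{W}_{n,2}$ through elementary Beta--type integrals, which after simplification with $p+q=1$ gives $\mathrm{Var}\bigl( \gamma_{1}N_{n}\left( \beta\right) \bigr) =\gamma_{1}^{2}\beta^{2}/\bigl( p\left( 2\beta-1\right) \bigr) =\sigma_{\beta}^{2}$, i.e. $\left( \ref{normality}\right) $.

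The hard part will be obtaining the Gaussian approximation of the \emph{weighted} Nelson--Aalen tail empirical process with control that remains uniform as $x\rightarrow\infty$: for the unweighted process (exponent $p$) this uniform control is available only when $p>1/2$, and the whole purpose of the weight $x^{-\beta/(p\gamma_{1})}$, i.e. of taking $\beta>1$, is to restore integrability and tightness across the full censoring range $0<p<1$. A secondary difficulty is bookkeeping the three separate sources of randomness --- the Nelson--Aalen integral, the random anchor $Z_{n-k:n}$, and $\widehat{p}_{k}$ --- and recombining them, without double counting, into the single functional $N_{n}\left( \beta\right) $ expressed through one Wiener process.
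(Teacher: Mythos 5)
Your proposal follows essentially the same route as the paper: reduction of the Kaplan--Meier case to the Nelson--Aalen one via Proposition \ref{prop7}, integration by parts, substitution of the weighted Gaussian approximation $\sqrt{k}L_{n}(x)=J_{n}(x)+o_{\mathbf{P}}(x^{(2\eta-p)/\gamma-\epsilon})$ of \cite{MNS2025} (which is already available for every $0<p<1$, so no new weighted approximation has to be proved; the condition $\beta>1$ enters only through the requirement $\beta>4\eta$ with $\eta>1/4$ that makes the weighted remainder integrals converge), extraction of the bias from the uniform second-order inequality, and the change of variables $x=s^{-\gamma/p}$ plus an integration by parts to reach $N_{n}(\beta)$ and the variance $\sigma_{\beta}^{2}$.

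The one place where your bookkeeping goes wrong is the role of $\widehat{p}_{k}$. You let $\mathcal{G}_{n}(x)$ depend on $\sqrt{k}(\widehat{p}_{k}-p)$ and describe the summand $(\beta/p)\mathbf{W}_{n,1}(1)$ of $N_{n}(\beta)$ as a ``joint threshold / $\widehat{p}_{k}$ contribution.'' In fact $\widehat{p}_{k}$ contributes nothing to the limit law: it enters the estimator twice, through the prefactor $\beta/\widehat{p}_{k}$ and through the exponent $\beta/\widehat{p}_{k}$, and the two first-order effects are $-(1+o_{\mathbf{P}}(1))\gamma p^{-2}(\widehat{p}_{k}-p)$ and $+(1+o_{\mathbf{P}}(1))\gamma p^{-2}(\widehat{p}_{k}-p)$, which cancel --- this is statement $(\ref{Tn1-Tn2})$, and it is why the paper can remark that any estimator $\overline{p}_{k}$ of $p$ with $\sqrt{k}(\overline{p}_{k}-p)=O_{\mathbf{P}}(1)$ yields the same asymptotic variance. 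The term $(\beta/p)\mathbf{W}_{n,1}(1)$ arises entirely from the random threshold, i.e.\ from the summand $-x^{-1/\gamma_{1}}\mathbf{W}_{n,1}(1)$ of $J_{1n}$ in $(\ref{Jn1})$. If $\sqrt{k}(\widehat{p}_{k}-p)$ really survived inside $\mathcal{G}_{n}$ with a nonzero coefficient, its covariance with $\mathbf{W}_{n,1}$ (both are driven by the same empirical process) would have to be tracked and the variance would not reduce to $\sigma_{\beta}^{2}$; so before the variance computation you must first establish this cancellation. Apart from that, your explicit treatment of the boundary term at $x=Z_{n:n}/Z_{n-k:n}$ (of order $k^{-\beta}\log k$) is a detail the paper passes over silently, and the rest of the sketch matches the published argument.
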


\begin{remark}
Theorem $\ref{theorem 1}$ remains valid if one uses any other consistent
estimator $\overline{p}_{k}$ for $p$ instead of $\widehat{p}_{k}.$ Moreover,
in view of the statement $\left(  \ref{final-app}\right)  ,$ Theorem
$\ref{theorem 2}$ also remains valid provided that $\sqrt{k}\left(
\overline{p}_{k}-p\right)  $ is asymptotically bounded. In other words, the
choice of the estimator of $p$ has no effect on the asymptotic variance
$\sigma_{\beta}^{2}.$
\end{remark}

\begin{remark}
For $\beta=p>1/2,$ i.e. when $\widehat{\gamma}_{1,k}^{\left(  NA\right)
}\left(  \beta\right)  $ meets $\widehat{\gamma}_{1,k}^{\left(  MNS\right)
},$ the asymptotic variance equals $\gamma_{1}^{2}p/\left(  2p-1\right)  $
which corresponds to that of Theorem 3.1 in \citep[] []{MNS2025}.
\end{remark}

\begin{remark}
If $p>1/2,$ then $\sigma_{\beta}^{2}<\sigma_{p}^{2}.$ This means that, in the
weak censoring case, the asymptotic variance of $\widehat{\gamma}%
_{1,k}^{\left(  \bullet\right)  }\left(  \beta\right)  $ decreases compared to
that of $\widehat{\gamma}_{1,k}^{\left(  MNS\right)  }.$ Indeed, we have%
\[
\sigma_{\beta}^{2}-\sigma_{p}^{2}=\frac{\left(  \beta-p\right)  \left(
2p\beta-p-\beta\right)  }{p\left(  2p-1\right)  \left(  2\beta-1\right)
}\gamma_{1}^{2}<0,
\]
provided that $1<\beta<p/\left(  2p-1\right)  ,$ which holds because
$p/\left(  2p-1\right)  >1.$ One may choose $\epsilon,$ sufficiently small, so
that $p/\left(  2p-1\right)  >1+\epsilon=\beta.$
\end{remark}

\begin{remark}
According to \cite{BWW2019}, the asymptotic variance of $\widehat{\gamma
}_{1,k}^{\left(  BW\right)  }\left(  \beta\right)  $ equals $\gamma_{1}%
^{2}p\left(  1+\beta\gamma_{1}\right)  ^{2}/\left(  2p\left(  1+\beta
\gamma_{1}\right)  -1\right)  ,$ whereas that of $\widehat{\gamma}%
_{1,k}^{\left(  W\right)  }$ is \ equal to $\gamma_{1}^{2}p/\left(
2p-1\right)  ,$ for $p>1/2.$ However, the authors did not provide any
discussion regarding the relative magnitude of the first variance compared to
the second. A simple algebraic manipulation shows that the difference between
the two variances is equal to%
\[
\frac{p\beta\gamma_{1}}{2p-1}\frac{\left(  2p-1\right)  \beta\gamma
_{1}-2\left(  1-p\right)  }{2p\left(  1+\beta\gamma_{1}\right)  -1}.
\]
For $p>1/2$ and $\gamma_{1}\beta>-1,$ the sign of both ratios is not
immediately clear, as it depends on the interaction among $p,$ $\gamma_{1}$
and $\beta.$ Even if $\beta>0,$ the first ratio and the denominator of the
second ratio are positive, the sign of the numerator in the second ratio may
still be ambiguous. In summary, for $\sigma_{\gamma_{1},\beta}^{2}$ to be
smaller than $\sigma_{\gamma_{1}}^{2},$ in the weak censoring case, the
following condition must hold: $0<\gamma_{1}\beta<2\left(  1-p\right)
/\left(  2p-1\right)  .$ In conclusion, choosing a suitable value for $\beta$
that ensures this inequality is not straightforward from a practical standpoint.
\end{remark}

\begin{remark}
As previously mentioned, Theorem \ref{second-orderF} is formulated to
simultaneously generalize the asymptotic normality of both estimators
$\widehat{\gamma}_{1,k}^{\left(  MNS\right)  }$ and $\widehat{\gamma}%
_{1,k}^{\left(  W\right)  }$ under the condition $p>1/2.$ Furthermore, it
relaxes the assumptions required for the asymptotic normality of
$\widehat{\gamma}_{1,k}^{\left(  W\right)  }$\citep[] []{BWW2019} by extending
its applicability to a broader class of regularly varying functions, beyond
the classical Hall-type models. In addition, the theorem offers a useful
functional representation of the limiting distribution in terms of a sequence
of Wiener processes.
\end{remark}

\section{\textbf{Simulation study\label{sec3}}}

\noindent First, it is noteworthy that this study is carried out with respect
to the estimator $\widehat{\gamma}_{1,k}^{\left(  NA\right)  }\left(
\beta\right)  .$ To assess the performance of $\widehat{\gamma}_{1,k}^{\left(
NA\right)  }\left(  \beta\right)  $ and compare it with $\widehat{\gamma
}_{1,k}^{\left(  MNS\right)  }$ and $\widehat{\gamma}_{1,k}^{\left(
EFG\right)  },$ we use two sets of data from Burr model with parameters
$(\zeta,\eta)$ and Fr\'{e}chet model with parameter $\zeta$,\ respectively
defined by
\[
1-\left(  1+x^{1/\eta}\right)  ^{-\eta/\zeta}\text{\quad and\quad}%
\exp(-x^{-1/\zeta}),\text{ }x>0,
\]
and zero elsewhere.\medskip

\noindent We consider two censoring scenarios as follows:

\begin{itemize}
\item[$\bullet$] Burr $(\gamma_{1},\eta)$ censored by Burr $(\gamma_{2}%
,\eta).$

\item[$\bullet$] Fr\'{e}chet $(\gamma_{1})$ censored by Fr\'{e}chet
$(\gamma_{2}).$
\end{itemize}

\noindent We fix $\eta=$ $0.25$ and consider three values for the tuning
parameter $\beta=1.01,1.5$ and $2,$ two values for the tail index $\gamma
_{1}=0.4$ and $0.7,$ and three censoring percentages $1-p=0.70,0.50$ and
$0.30.$ For each pair $\left(  \gamma_{1},p\right)  ,$ we solve the equation
$p=\gamma_{2}/\left(  \gamma_{1}+\gamma_{2}\right)  $\ to determine the
corresponding $\gamma_{2}$-value. We generate $2000$ samples of size $1000$
for each scenario and present the (averaged) results in Figures \ref{fig1}%
-\ref{fig6} in Section \ref{Appendix B}.\medskip

\noindent\textbf{Discussion of the simulation results}\medskip

\noindent The results demonstrate the superiority of $\widehat{\gamma}%
_{1,k}^{\left(  NA\right)  }\left(  \beta\right)  $ over both estimators
$\widehat{\gamma}_{1,k}^{\left(  MNS\right)  }$ and $\widehat{\gamma}%
_{1,k}^{\left(  EFG\right)  }$in terms of absolute bias (Bias) and MSE under
several specific conditions. Our estimator $\widehat{\gamma}_{1,k}^{\left(
NA\right)  }\left(  \beta\right)  $ shows better performance for large values
of the tail index, as its Bias and MSE are lower with respect to those of
$\widehat{\gamma}_{1,k}^{\left(  MNS\right)  }$ and $\widehat{\gamma}%
_{1,k}^{\left(  EFG\right)  }.$ This advantage also appears clearly when the
proportion of uncensored data is small, i.e., in the presence of high
censorship. Furthermore, $\widehat{\gamma}_{1,k}^{\left(  NA\right)  }\left(
\beta\right)  $ performs well when the tuning parameter $\beta$ is close to
one (i.e., for $\beta=1.01),$ where the results are more stable and accurate.
However, as $\beta$ increases, the efficiency of the estimator decreases, as
reflected in the rise of both Bias and MSE. In such cases, $\widehat{\gamma
}_{1,k}^{\left(  NA\right)  }\left(  \beta\right)  $ becomes less competitive
compared to the other two estimators.\medskip

\noindent Overall, our estimator exhibits promising performance in specific
settings, especially in cases of high censoring rate or heavier tails.
However, its sensitivity to changes in the value of the tuning parameter
should be taken into account.

\section{\textbf{Real data application\label{sec4}}}

\subsection{Insurance loss data}

\noindent The insurance loss dataset, collected by the US Insurance Services
Office, Inc., is publicly accessible via the copula package in the R
statistical software. This dataset has been the subject of multiple analyses,
including those by \cite{FV98}, \cite{Klugman99} and \cite{DPV06}. It
comprises $1500$ observations, among which $34$ are right-censored, indicating
that the corresponding losses exceed policy limits, which vary from contract
to contract. Using the adaptive selection algorithm proposed by Reiss and
Thomas (see \citep[][]{ReTo97}, page 121), the optimal sample fraction
$k_{opt}$ for estimating $p$ is found to be $51,$ yielding an estimate
$\widehat{p}_{k}=0.76.$ For the adapted Hill estimator, the optimal sample
fraction is $73,$ leading to an estimate of $\widehat{\gamma}_{1,k}^{\left(
EFG\right)  }=0.77.$ Similarly, the optimal sample fraction for
$\widehat{\gamma}_{1,k}^{\left(  MNS\right)  }$ is $30,$ resulting in
$\widehat{\gamma}_{1,k}^{\left(  MNS\right)  }=0.45.$ The algorithm used to
determine the optimal number $k_{opt},$ of upper order statistics, essential
for computing each estimator, is defined by%
\[
k_{opt}:=\arg\min_{1<k<n}\frac{1}{k}\sum_{i=1}^{k}i^{\nu}\left\vert
\widehat{\xi}_{i}-\text{median}\left\{  \widehat{\xi}_{1},...,\widehat{\xi
}_{k}\right\}  \right\vert ,\text{ }0\leq\nu\leq1/2,
\]
where $\widehat{\xi}_{i}$ is $\xi$-estimator, namely $(\widehat{p}%
_{k},\widehat{\gamma}_{1,k}^{\left(  EFG\right)  },\widehat{\gamma}%
_{1}^{\left(  MNS\right)  }$ or $\widehat{\gamma}_{1,k}^{\left(  NA\right)
}\left(  \beta\right)  )$ based on the $i$ upper order statistics. Following
the recommendation of \cite{NF2004}, we choose $\nu=0.3$ which has been shown
to yield improved performance in terms of bias and MSE. As discussed in
Section \ref{sec3}, the proposed estimator performs best when the tuning
parameter is set to $\beta=1.01.$ Under this choice, the optimal number of
upper order statistics is also $30$, resulting in a tail index estimate of
$\widehat{\gamma}_{1,k}^{\left(  NA\right)  }\left(  \beta\right)  =0.51.$

\subsection{Australian AIDS data}

\noindent The Australian AIDS survival dataset, compiled by Dr. P. J. Solomon
and the Australian National Centre in HIV Epidemiology and Clinical Research,
comprises records of $2843$ diagnosed prior to July $1,1991.$ For the purposes
of this analysis, we focus on a subset of $2754$ male patients. This dataset
has been extensively analyzed in the literature, see for instance \cite{RS-94}
and \cite{VR-02} (pages 379-385), \cite{EnFG08} and \cite{Ndao16}. Adopting
the same methodology as in the previous example, we determine an optimal
sample fraction of $162$ for the estimation of $p,$ yielding $\widehat{p}%
_{k}=0.30.$ For the adapted Hill estimator, the optimal number of upper order
statistics is $55,$ resulting in $\widehat{\gamma}_{1,k}^{\left(  EFG\right)
}=0.72.$ Interestingly, the same optimal threshold, $55,$ is identified for
$\widehat{\gamma}_{1,k}^{\left(  MNS\right)  },$ but with a notably lower
estimate $\widehat{\gamma}_{1,k}^{\left(  MNS\right)  }=0.15.$ When applying
our proposed estimator with tuning parameter $\beta=1.01,$ the optimal number
of top order statistics increases to $275,$ leading to an estimate
$\widehat{\gamma}_{1,k}^{\left(  NA\right)  }\left(  \beta\right)  =0.64.$

\section{\textbf{Proofs\label{sec5}}}

\noindent We will focus on proving the consistency and asymptotic normality of
$\widehat{\gamma}_{1,k}^{\left(  NA\right)  }\left(  \beta\right)  $ as the
corresponding results for $\widehat{\gamma}_{1,k}^{\left(  KM\right)  }\left(
\beta\right)  $ can be directly inferred from Proposition \ref{prop7}.

\subsection{\textbf{Proof }of Theorem \ref{theorem 1}}

Note that changing variables $x=y/Z_{n-k:n}$ then integrating by parts in
$\left(  \ref{f-form}\right)  ,$ yield
\[
\widehat{\gamma}_{1,k}^{\left(  NA\right)  }\left(  \beta\right)  =\frac
{\beta}{\widehat{p}_{k}}\int_{1}^{\infty}x^{-1}\left(  \frac{\overline{F}%
_{n}^{\left(  NA\right)  }\left(  Z_{n-k:n}x\right)  }{\overline{F}%
_{n}^{\left(  NA\right)  }\left(  Z_{n-k:n}\right)  }\right)  ^{\beta
/\widehat{p}_{k}}dx.
\]
Let us decompose $\widehat{\gamma}_{1,k}^{\left(  NA\right)  }\left(
\beta\right)  -\gamma_{1}$ into the sum of%
\[
T_{n1}:=\frac{\beta}{p}\int_{1}^{\infty}x^{-1}\left(  \frac{\overline
{F}\left(  Z_{n-k:n}x\right)  }{\overline{F}\left(  Z_{n-k:n}\right)
}\right)  ^{\beta/p}dx-\gamma_{1},
\]%
\[
T_{n2}:=\beta\left(  \frac{1}{\widehat{p}_{k}}-\frac{1}{p}\right)  \int%
_{1}^{\infty}x^{-1}\left(  \frac{\overline{F}_{n}^{\left(  NA\right)  }\left(
Z_{n-k:n}x\right)  }{\overline{F}_{n}^{\left(  NA\right)  }\left(
Z_{n-k:n}\right)  }\right)  ^{\beta/\widehat{p}_{k}}dx,
\]%
\[
T_{n3}:=\frac{\beta}{p}\int_{1}^{\infty}x^{-1}\left\{  \left(  \frac
{\overline{F}_{n}^{\left(  NA\right)  }\left(  Z_{n-k:n}x\right)  }%
{\overline{F}_{n}^{\left(  NA\right)  }\left(  Z_{n-k:n}\right)  }\right)
^{\beta/\widehat{p}_{k}}-\left(  \frac{\overline{F}_{n}^{\left(  NA\right)
}\left(  Z_{n-k:n}x\right)  }{\overline{F}_{n}^{\left(  NA\right)  }\left(
Z_{n-k:n}\right)  }\right)  ^{\beta/p}\right\}  dx,
\]
and%
\[
T_{n4}:=\frac{\beta}{p}\int_{1}^{\infty}x^{-1}\left\{  \left(  \frac
{\overline{F}_{n}^{\left(  NA\right)  }\left(  Z_{n-k:n}x\right)  }%
{\overline{F}_{n}^{\left(  NA\right)  }\left(  Z_{n-k:n}\right)  }\right)
^{\beta/p}-\left(  \frac{\overline{F}\left(  Z_{n-k:n}x\right)  }{\overline
{F}\left(  Z_{n-k:n}\right)  }\right)  ^{\beta/p}\right\}  dx.
\]
Since $Z_{n-k:n}\overset{\mathbf{P}}{\rightarrow}\infty,$ then from assertion
$\left(  i\right)  $ of Proposition \ref{prop2} we infer that $T_{n1}%
\overset{\mathbf{P}}{\rightarrow}0,$ as $n\rightarrow\infty.$ Then, it remains
to show that $T_{ni}=o_{\mathbf{P}}\left(  1\right)  ,$ for $i=2,3,4.$
Assertion $\left(  iii\right)  $ of Proposition \ref{prop3} states that, for
every $1/4<\eta<1/2$ and small $0<\epsilon<1,$ we have uniformly over $x>1$%
\begin{equation}
\frac{\overline{F}_{n}^{\left(  NA\right)  }\left(  Z_{n-k:n}x\right)
}{\overline{F}_{n}^{\left(  NA\right)  }\left(  Z_{n-k:n}\right)
}=x^{-p/\gamma}\left(  1+o_{\mathbf{P}}\left(  x^{2\eta/\gamma-\epsilon
}\right)  \right)  ,\text{ as }n\rightarrow\infty. \label{app-Fn}%
\end{equation}
It follows that%
\begin{align*}
T_{n2}  &  =\beta\left(  \frac{1}{\widehat{p}_{k}}-\frac{1}{p}\right)
\int_{1}^{\infty}x^{-1}\left(  x^{-p/\gamma}\left(  1+o_{\mathbf{P}}\left(
x^{2\eta/\gamma-\epsilon}\right)  \right)  \right)  ^{\beta/\widehat{p}_{k}%
}dx\\
&  =\beta\left(  \frac{1}{\widehat{p}_{k}}-\frac{1}{p}\right)  \int%
_{1}^{\infty}x^{-1-\beta p/\left(  \gamma\widehat{p}_{k}\right)  }\left(
1+o_{\mathbf{P}}\left(  x^{2\eta/\gamma-\epsilon}\right)  \right)  dx.
\end{align*}
The latter integral equals $\left(  1+o_{\mathbf{P}}\left(  1\right)  \right)
\gamma\widehat{p}_{k}/\left(  \beta p\right)  $ provided that $\beta
>2\widehat{p}_{k}\eta/p.$ Since $\eta>1/4,$ then the previous assumption
becomes $\beta>\widehat{p}_{k}/\left(  2p\right)  .$ Note that $\widehat{p}%
_{k}$ is a consistent estimator for $p$ \citep[see, e.g.,] []{EnFG08}, it
follows that $\widehat{p}_{k}/\left(  2p\right)  \overset{\mathbf{P}%
}{\rightarrow}1/2,$ this means that the previous inequality holds, as
$n\rightarrow\infty,$ because $\beta>1.$ In summary, we showed that
\[
T_{n2}=-\left(  1+o_{\mathbf{P}}\left(  1\right)  \right)  \frac{\gamma}%
{p^{2}}\left(  \widehat{p}_{k}-p\right)  ,
\]
which tends to zero as $n\rightarrow\infty.$ Let us now consider the third
term $T_{n3}.$ By applying the mean value theorem to function $x\rightarrow
a^{x},$ we write for every real $u$ and $v$ and $a>0$%
\[
a^{u}-a^{v}=\exp\left(  u\log a\right)  -\exp\left(  v\log a\right)  =\left(
u-v\right)  a^{w}\log a,
\]
where $w$ is between $u$ and $v.$ Letting $u:=1/\widehat{p},$ $v:=1/p$ and%
\[
a:=\left(  \frac{\overline{F}_{n}^{\left(  NA\right)  }\left(  Z_{n-k:n}%
x\right)  }{\overline{F}_{n}^{\left(  NA\right)  }\left(  Z_{n-k:n}\right)
}\right)  ^{\beta},
\]
yields
\[
T_{n3}=\beta\left(  \frac{1}{\widehat{p}_{k}}-\frac{1}{p}\right)  \int%
_{1}^{\infty}x^{-1}\left(  \frac{\overline{F}_{n}^{\left(  NA\right)  }\left(
Z_{n-k:n}x\right)  }{\overline{F}_{n}^{\left(  NA\right)  }\left(
Z_{n-k:n}\right)  }\right)  ^{\beta/\widetilde{p}_{k}}\log\left(
\frac{\overline{F}_{n}^{\left(  NA\right)  }\left(  Z_{n-k:n}x\right)
}{\overline{F}_{n}^{\left(  NA\right)  }\left(  Z_{n-k:n}\right)  }\right)
^{\beta/p}dx,
\]
where $\widetilde{p}_{k}$ is between $p$ and $\widehat{p}_{k}.$ Once again by
using statement $\left(  \ref{app-Fn}\right)  ,$ the integral just above
becomes%
\begin{align*}
&  \int_{1}^{\infty}x^{-1}\left(  x^{-p/\gamma}\left(  1+o_{\mathbf{P}}\left(
x^{2\eta/\gamma-\epsilon}\right)  \right)  \right)  ^{\beta/\widetilde{p}_{k}%
}\log\left(  x^{-p/\gamma}\left(  1+o_{\mathbf{P}}\left(  x^{2\eta
/\gamma-\epsilon}\right)  \right)  \right)  ^{\beta/p}dx\\
&  =\int_{1}^{\infty}x^{-1-\beta p/\left(  \widetilde{p}_{k}\gamma\right)
}\left(  1+o_{\mathbf{P}}\left(  x^{2\eta/\gamma-\epsilon}\right)  \right)
\log x^{-\beta/\gamma}dx\\
&  +o_{\mathbf{P}}\left(  1\right)  \int_{1}^{\infty}x^{-1-\beta p/\left(
\widetilde{p}_{k}\gamma\right)  +2\eta/\gamma-\epsilon}\left(  1+o_{\mathbf{P}%
}\left(  x^{2\eta/\gamma-\epsilon}\right)  \right)  dx%
\begin{array}
[c]{c}%
=:
\end{array}
I_{1}+I_{2}.
\end{align*}
Since~$\widehat{p}_{k}\overset{\mathbf{P}}{\rightarrow}p$ then $\widetilde{p}%
_{k}\overset{\mathbf{P}}{\rightarrow}p$ as well, thus, for all large $n,$ we
have%
\[
I_{1}=-\frac{\beta}{\gamma}\int_{1}^{\infty}x^{-1-\left(  1+o_{\mathbf{P}%
}\left(  1\right)  \right)  \beta/\gamma}\left(  1+o_{\mathbf{P}}\left(
x^{2\eta/\gamma-\epsilon}\right)  \right)  \left(  \log x\right)  dx,
\]
and
\[
I_{2}=o_{\mathbf{P}}\left(  1\right)  \int_{1}^{\infty}x^{-1-\left(
1+o_{\mathbf{P}}\left(  1\right)  \right)  \beta/\gamma+2\eta/\gamma-\epsilon
}\left(  1+o_{\mathbf{P}}\left(  x^{2\eta/\gamma-\epsilon}\right)  \right)
dx.
\]
It is easy to verify that $I_{1}=-\left(  1+o_{\mathbf{P}}\left(  1\right)
\right)  \gamma/\beta,$ provided that $-\left(  1+o_{\mathbf{P}}\left(
1\right)  \right)  \beta+2\eta<0.$ In other words, with probability tending to
one, $I_{1}$ is finite provided that $\beta>\left(  1+o_{\mathbf{P}}\left(
1\right)  \right)  2\eta,$ which is satisfied for any $\eta>1/4,$ because
$\beta>1.$ By using similar arguments, we show that $I_{2}=o_{\mathbf{P}%
}\left(  1\right)  ,$ for every $\beta>1.$ In summary, we have%
\[
T_{n3}=-\left(  1+o_{\mathbf{P}}\left(  1\right)  \right)  \gamma\left(
\frac{1}{\widehat{p}_{k}}-\frac{1}{p}\right)  =\left(  1+o_{\mathbf{P}}\left(
1\right)  \right)  \frac{\gamma}{p^{2}}\left(  \widehat{p}_{k}-p\right)  ,
\]
which tends to zero in probability as $n\rightarrow\infty.$ It is worth
mentioning that%
\begin{equation}
T_{n2}+T_{n3}=-\left(  1+o_{\mathbf{P}}\left(  1\right)  \right)  \frac
{\gamma}{p^{2}}\left(  \widehat{p}_{k}-p\right)  +\left(  1+o_{\mathbf{P}%
}\left(  1\right)  \right)  \frac{\gamma}{p^{2}}\left(  \widehat{p}%
_{k}-p\right)  =o_{\mathbf{P}}\left(  \widehat{p}_{k}-p\right)  .
\label{Tn1-Tn2}%
\end{equation}
This result is particularly valuable as it eliminates the need for additional
computations in the proof of Theorem \ref{theorem 2} below. Next, we show that
$T_{n4}=o_{\mathbf{P}}\left(  1\right)  $ as well. Once again, invoking the
mean value theorem, we obtain%
\[
\left(  \frac{\overline{F}_{n}^{\left(  NA\right)  }\left(  Z_{n-k:n}x\right)
}{\overline{F}_{n}^{\left(  NA\right)  }\left(  Z_{n-k:n}\right)  }\right)
^{\beta/p}-\left(  \frac{\overline{F}\left(  Z_{n-k:n}x\right)  }{\overline
{F}\left(  Z_{n-k:n}\right)  }\right)  ^{\beta/p}=\frac{\beta}{p}\left(
\eta_{n}\left(  x\right)  \right)  ^{\beta/p-1}L_{n}\left(  x\right)  ,
\]
where%
\[
\min\left\{  \frac{\overline{F}\left(  Z_{n-k:n}x\right)  }{\overline
{F}\left(  Z_{n-k:n}\right)  },\frac{\overline{F}_{n}^{\left(  NA\right)
}\left(  Z_{n-k:n}x\right)  }{\overline{F}_{n}^{\left(  NA\right)  }\left(
Z_{n-k:n}\right)  }\right\}  <\eta_{n}\left(  x\right)  <\max\left\{
\frac{\overline{F}\left(  Z_{n-k:n}x\right)  }{\overline{F}\left(
Z_{n-k:n}\right)  },\frac{\overline{F}_{n}^{\left(  NA\right)  }\left(
Z_{n-k:n}x\right)  }{\overline{F}_{n}^{\left(  NA\right)  }\left(
Z_{n-k:n}\right)  }\right\}
\]
and
\begin{equation}
L_{n}\left(  x\right)  :=\frac{\overline{F}_{n}^{\left(  NA\right)  }\left(
Z_{n-k:n}x\right)  }{\overline{F}_{n}^{\left(  NA\right)  }\left(
Z_{n-k:n}\right)  }-\frac{\overline{F}\left(  Z_{n-k:n}x\right)  }%
{\overline{F}\left(  Z_{n-k:n}\right)  }, \label{Ln}%
\end{equation}
which implies that $T_{n4}=\left(  \beta/p\right)  ^{2}\int_{1}^{\infty}%
x^{-1}\left(  \eta_{n}\left(  x\right)  \right)  ^{\beta/p-1}L_{n}\left(
x\right)  dx.$ We state in Proposition \ref{prop1} that, for every small
$0<\epsilon<1$ and for all large $t,$ we have%
\begin{equation}
\frac{\overline{F}\left(  tx\right)  }{\overline{F}\left(  t\right)
}=x^{-p/\gamma}\left(  1+o\left(  x^{-\epsilon}\right)  \right)  ,\text{ as
}t\rightarrow\infty, \label{appF}%
\end{equation}
uniformly over $x>1.$ Combining both statements $\left(  \ref{app-Fn}\right)
$ and $\left(  \ref{appF}\right)  $ yields that%
\begin{equation}
\eta_{n}\left(  x\right)  =O_{\mathbf{P}}\left(  x^{-p/\gamma}\left(
1+o\left(  x^{2\eta/\gamma-\epsilon}\right)  \right)  \right)  .
\label{appeta}%
\end{equation}
On the other hand, from assertion $\left(  ii\right)  $ of Proposition
$\ref{prop3},$ we infer that, for every $1/4<\eta<1/2$ and for any small
$0<\epsilon<1,$ we have%
\begin{equation}
L_{n}\left(  x\right)  =o_{\mathbf{P}}\left(  x^{\left(  2\eta-p\right)
/\gamma-\epsilon}\right)  ,\text{ } \label{appLn}%
\end{equation}
uniformly over $x>1.$ On the other hand, from approximations $\left(
\ref{appeta}\right)  $ and $\left(  \ref{appLn}\right)  ,$ we write
\begin{align*}
T_{n4}  &  =\left(  \beta/p\right)  ^{2}\int_{1}^{\infty}x^{-1}\left(
x^{-p/\gamma}\left(  1+o_{\mathbf{P}}\left(  x^{2\eta/\gamma-\epsilon}\right)
\right)  \right)  ^{\beta/p-1}o_{\mathbf{P}}\left(  x^{\left(  2\eta-p\right)
/\gamma-\epsilon}\right)  dx\\
&  =o_{\mathbf{P}}\left(  1\right)  \int_{1}^{\infty}x^{-1+\left(  2\eta
-\beta\right)  /\gamma-\epsilon}\left(  1+o_{\mathbf{P}}\left(  x^{2\eta
/\gamma-\epsilon}\right)  \right)  dx.
\end{align*}
The latter integral is finite provided that $\beta>4\eta$ which holds because
$\eta>1/4$ and $\beta>1,$ thus $T_{n4}=o_{\mathbf{P}}\left(  1\right)  .$

\subsection{\textbf{Proof }of Theorem \ref{theorem 2}}

We will consider the same decomposition of $\widehat{\gamma}_{1}^{\left(
NA\right)  }\left(  \beta\right)  -\gamma_{1}$ that we used in the proof
Theorem \ref{theorem 1}. Let us write
\begin{align*}
T_{n1} &  =\left(  \beta/p\right)  \int_{1}^{\infty}x^{-1}\left(
\frac{\overline{F}\left(  Z_{n-k:n}x\right)  }{\overline{F}\left(
Z_{n-k:n}\right)  }\right)  ^{\beta/p}dx-\gamma_{1}\\
&  =\left(  \beta/p\right)  \int_{1}^{\infty}x^{-1}\left\{  \left(
\frac{\overline{F}\left(  Z_{n-k:n}x\right)  }{\overline{F}\left(
Z_{n-k:n}\right)  }\right)  ^{\beta/p}-\left(  x^{-1/\gamma_{1}}\right)
^{\beta/p}\right\}  dx.
\end{align*}
Applying Taylor expansion yields the decomposition of $T_{n1}$ into the sum
of
\[
T_{n1}^{\left(  1\right)  }=\left(  \beta/p\right)  ^{2}\int_{1}^{\infty
}x^{-1}\left(  \frac{\overline{F}\left(  Z_{n-k:n}x\right)  }{\overline
{F}\left(  Z_{n-k:n}\right)  }-x^{-1/\gamma_{1}}\right)  \left(
x^{-1/\gamma_{1}}\right)  ^{\beta/p-1}dx,
\]
and
\[
T_{n1}^{\left(  2\right)  }=\frac{\left(  \beta/p-1\right)  \left(
\beta/p\right)  ^{2}}{2}\int_{1}^{\infty}x^{-1}\left(  \frac{\overline
{F}\left(  Z_{n-k:n}x\right)  }{\overline{F}\left(  Z_{n-k:n}\right)
}-x^{-1/\gamma_{1}}\right)  ^{2}\left(  \rho_{n}\left(  x\right)  \right)
^{\beta/p-2}dx,
\]
where
\[
\min\left\{  \frac{\overline{F}\left(  Z_{n-k:n}x\right)  }{\overline
{F}\left(  Z_{n-k:n}\right)  },x^{-1/\gamma_{1}}\right\}  <\rho_{n}\left(
x\right)  <\max\left\{  \frac{\overline{F}\left(  Z_{n-k:n}x\right)
}{\overline{F}\left(  Z_{n-k:n}\right)  },x^{-1/\gamma_{1}}\right\}  .
\]
The uniform convergence version corresponding the second-order condition of
regularly varying functions $\left(  \ref{second-orderF}\right)  $ says that,
for each $\epsilon>0,$ there exists $t_{0}$ such that for $t>t_{0},$ we have%
\[
\frac{\overline{F}\left(  tx\right)  }{\overline{F}\left(  t\right)
}-x^{-1/\gamma_{1}}=\left(  1+o\left(  x^{-\epsilon}\right)  \right)
A_{1}\left(  t\right)  x^{-1/\gamma_{1}}\frac{x^{\tau_{1}/\gamma_{1}}-1}%
{\tau_{1}\gamma_{1}},
\]
for any $x>1;$ see, e.g., Proposition 4 together with Remark 1 in
\cite{HJ2011}. Then letting $t=Z_{n-k:n}$ yields that
\[
\frac{\overline{F}\left(  Z_{n-k:n}x\right)  }{\overline{F}\left(
Z_{n-k:n}\right)  }-x^{-1/\gamma_{1}}=\left(  1+o_{\mathbf{P}}\left(
x^{-\epsilon}\right)  \right)  A_{1}\left(  Z_{n-k:n}\right)  x^{-1/\gamma
_{1}}\frac{x^{\tau_{1}/\gamma_{1}}-1}{\tau_{1}\gamma_{1}},\text{ as
}n\rightarrow\infty.
\]
Recall that $Z_{n-k:n}/h\overset{\mathbf{P}}{\rightarrow}1,$ then the regular
variation of $A_{1}$ implies that
\[
\frac{\overline{F}\left(  Z_{n-k:n}x\right)  }{\overline{F}\left(
Z_{n-k:n}\right)  }-x^{-1/\gamma_{1}}=\left(  1+o_{\mathbf{P}}\left(
x^{-\epsilon}\right)  \right)  A_{1}\left(  h\right)  x^{-1/\gamma_{1}}%
\frac{x^{\tau_{1}/\gamma_{1}}-1}{\tau_{1}\gamma_{1}}.
\]
It follows that%
\[
\sqrt{k}T_{n1}^{\left(  1\right)  }=\left(  \beta/p\right)  ^{2}\sqrt{k}%
A_{1}\left(  h\right)  \int_{1}^{\infty}\left(  1+o_{\mathbf{P}}\left(
x^{-\epsilon}\right)  \right)  x^{-\beta/\gamma_{1}-1}\frac{x^{\tau_{1}%
/\gamma_{1}}-1}{\tau_{1}\gamma_{1}}dx.
\]
By assumption, we have $\sqrt{k}A_{1}\left(  h\right)  =O\left(  1\right)  ,$
then it is easy to verify that the latter integral equals $\left(
1+o_{\mathbf{P}}\left(  1\right)  \right)  /\left(  \left(  \beta/p\right)
\left(  \beta/p-\tau_{1}\right)  \right)  ,$ therefore%
\[
\sqrt{k}T_{n1}^{\left(  1\right)  }=\frac{\beta}{\beta-p\tau_{1}}\sqrt{k}%
A_{1}\left(  h\right)  +o_{\mathbf{P}}\left(  1\right)  .
\]
Now, we consider the remainder term $T_{n1}^{\left(  2\right)  }.$ Making use
of statement $\left(  \ref{second-orderF}\right)  $ we show that $\rho
_{n}\left(  x\right)  =x^{-p/\gamma}\left(  1+o_{\mathbf{P}}\left(
x^{-\epsilon}\right)  \right)  ,$ as $n\rightarrow\infty,$ uniformly over
$x>1.$ It follows that
\begin{align*}
\sqrt{k}T_{n1}^{\left(  1\right)  } &  =\frac{\left(  \beta/p-1\right)
\left(  \beta/p\right)  ^{2}}{2}\\
&  \sqrt{k}\int_{1}^{\infty}x^{-1}\left(  \left(  1+o_{\mathbf{P}}\left(
x^{-\epsilon}\right)  \right)  A_{1}\left(  Z_{n-k:n}\right)  x^{-1/\gamma
_{1}}\frac{x^{\tau_{1}/\gamma_{1}}-1}{\tau_{1}\gamma_{1}}\right)  ^{2}\\
&  \ \ \ \ \ \ \ \ \ \ \ \ \ \ \ \ \times\left(  x^{-p/\gamma}\left(
1+o_{\mathbf{P}}\left(  x^{-\epsilon}\right)  \right)  \right)  ^{\beta
/p-2}dx.
\end{align*}
Elementary calculation gives $\sqrt{k}T_{n1}^{\left(  1\right)  }%
=O_{\mathbf{P}}\left(  1/\sqrt{k}\right)  \left(  \sqrt{k}A_{1}\left(
Z_{n-k:n}\right)  \right)  ^{2}$ which tends to zero in probability as
$n\rightarrow\infty,$ because $1/\sqrt{k}\rightarrow0$ and $\sqrt{k}%
A_{1}\left(  Z_{n-k:n}\right)  =O_{\mathbf{P}}\left(  1\right)  .$ Let us now
consider the second and third terms $T_{n2}$\textbf{ }and\textbf{ }$T_{n3}.$
We mentioned in $\left(  \ref{Tn1-Tn2}\right)  $ that $T_{n2}\mathbf{+}%
T_{n3}=o_{\mathbf{P}}\left(  \widehat{p}_{k}-p\right)  ,$ it follows that
$\sqrt{k}\left(  T_{n2}\mathbf{+}T_{n3}\right)  =o_{\mathbf{P}}\left(
1\right)  \sqrt{k}\left(  \widehat{p}_{k}-p\right)  .$ The main term $T_{n4}$
leads to the asymptotic normality of our estimator. Indeed, making Taylor's
expansion to the second order, yields that%
\begin{align*}
&  \left(  \frac{\overline{F}_{n}^{\left(  NA\right)  }\left(  Z_{n-k:n}%
x\right)  }{\overline{F}_{n}^{\left(  NA\right)  }\left(  Z_{n-k:n}\right)
}\right)  ^{\beta/p}-\left(  \frac{\overline{F}\left(  Z_{n-k:n}x\right)
}{\overline{F}\left(  Z_{n-k:n}\right)  }\right)  ^{\beta/p}\\
&  =\frac{\beta}{p}\left(  \frac{\overline{F}\left(  Z_{n-k:n}x\right)
}{\overline{F}\left(  Z_{n-k:n}\right)  }\right)  ^{\beta/p-1}L_{n}\left(
x\right)  +\frac{\left(  \beta/p\right)  \left(  \beta/p-1\right)  }{2}\left(
\eta_{n}\left(  x\right)  \right)  ^{\beta/p-2}L_{n}^{2}\left(  x\right)  .
\end{align*}
This allows to split $T_{n4}$ into the sum of%
\[
T_{n4}^{\left(  1\right)  }:=\left(  \beta/p\right)  ^{2}\int_{1}^{\infty
}x^{-1}\left(  \frac{\overline{F}\left(  Z_{n-k:n}x\right)  }{\overline
{F}\left(  Z_{n-k:n}\right)  }\right)  ^{\beta/p-1}L_{n}\left(  x\right)  dx,
\]
and%
\[
T_{n4}^{\left(  2\right)  }:=\frac{\left(  \beta/p\right)  \left(
\beta/p-1\right)  }{2}\int_{1}^{\infty}x^{-1}\left(  \eta_{n}\left(  x\right)
\right)  ^{\beta/p-2}L_{n}^{2}\left(  x\right)  dx.
\]
\cite{MNS2025} state in that there exists a sequence of standard Wiener
processes $\left\{  W_{n}\left(  s\right)  ;\text{ }0\leq s\leq1\right\}
_{n\geq1}$ defined on the probability space $\left(  \Omega,\mathcal{A}%
,\mathbf{P}\right)  ,$ such that for every small $\epsilon>0$ and for any
$1/4<\eta<1/2:$%
\begin{equation}
\sqrt{k}L_{n}=J_{n}\left(  x\right)  +o_{\mathbf{P}}\left(  x^{\left(
2\eta-p\right)  /\gamma-\epsilon}\right)  ,\label{Gauss}%
\end{equation}
uniformly over $x>1,$ where $J_{n}\left(  x\right)  =J_{1n}\left(  x\right)
+J_{2n}\left(  x\right)  ,$ with%
\begin{equation}
J_{1n}\left(  x\right)  :=x^{1/\gamma_{2}}\mathbf{W}_{n,1}\left(
x^{-1/\gamma}\right)  -x^{-1/\gamma_{1}}\mathbf{W}_{n,1}\left(  1\right)
,\label{Jn1}%
\end{equation}
and
\begin{equation}
J_{2n}\left(  x\right)  :=\gamma^{-1}x^{-1/\gamma_{1}}%
{\displaystyle\int_{1}^{x}}
u^{1/\gamma-1}\left(  p\mathbf{W}_{n,2}\left(  u^{-1/\gamma}\right)
-q\mathbf{W}_{n,1}\left(  u^{-1/\gamma}\right)  \right)  du,\label{Jn2}%
\end{equation}
where%
\[
\mathbf{W}_{n,1}\left(  s\right)  :=W_{n}\left(  \theta\right)  -W_{n}\left(
\theta-ps\right)  ,\text{ for }0<s\leq\theta/p,
\]
and%
\[
\mathbf{W}_{n,2}\left(  s\right)  :=W_{n}\left(  1\right)  -W_{n}\left(
1-qs\right)  ,\text{ for }0\leq s\leq1,
\]
being two independent centred Gaussian processes, with$\ \theta:=H^{\left(
1\right)  }\left(  \infty\right)  $ and $q:=1-p.$ To obtain the weak
approximation $\left(  \ref{Gauss}\right)  ,$ the authors used the
representations of $H_{n}^{\left(  0\right)  }$ and $H_{n}^{\left(  1\right)
}$ by a common uniform empirical process \citep[] []{Deheuvels1996} pertaining
to the sample of uniform-$\left(  0,1\right)  $ rv's%
\begin{equation}
U_{i}:=\delta_{i}H^{\left(  1\right)  }\left(  Z_{i}\right)  +\left(
1-\delta_{i}\right)  \left(  \theta+H^{\left(  0\right)  }\left(
Z_{i}\right)  \right)  ,\text{ }i=1,...,n.\label{unif}%
\end{equation}
Then, they applied the approximations of $H_{n}^{\left(  0\right)  }$ and
$H_{n}^{\left(  1\right)  }$ by the common sequence of Brownian bridges
$W_{n}\left(  s\right)  -sW_{n}\left(  1\right)  $ \citep[] []{CSCSHM86}. Let
us now turn to our problem. We simultaneously use $\left(  \ref{Gauss}\right)
$ and $\left(  \ref{appF}\right)  ,$ to write
\begin{align*}
\sqrt{k}T_{n4}^{\left(  1\right)  } &  =\left(  \beta/p\right)  ^{2}\int%
_{1}^{\infty}x^{-1}\left(  x^{-p/\gamma}\left(  1+o_{\mathbf{P}}\left(
x^{-\epsilon}\right)  \right)  \right)  ^{\beta/p-1}\left(  J_{n}\left(
x\right)  +o_{\mathbf{P}}\left(  x^{\left(  2\eta-p\right)  /\gamma-\epsilon
}\right)  \right)  dx\\
&  =\left(  \beta/p\right)  ^{2}\int_{1}^{\infty}x^{-1-\left(  \beta-p\right)
/\gamma}\left(  1+o_{\mathbf{P}}\left(  x^{-\epsilon}\right)  \right)  \left(
J_{n}\left(  x\right)  +o_{\mathbf{P}}\left(  x^{\left(  2\eta-p\right)
/\gamma-\epsilon}\right)  \right)  dx,
\end{align*}
which may be rewritten into the sum of $\left(  \beta/p\right)  ^{2}\int%
_{1}^{\infty}x^{-1-\left(  \beta-p\right)  /\gamma}J_{n}\left(  x\right)
dx,$
\[
R_{1,n,\epsilon}:=o_{\mathbf{P}}\left(  1\right)  \int_{1}^{\infty
}x^{-1-\left(  \beta-p\right)  /\gamma-\epsilon}J_{n}\left(  x\right)
dx\text{ and }R_{2,\epsilon,\eta}:=o_{\mathbf{P}}\left(  1\right)  \int%
_{1}^{\infty}x^{-1-\left(  \beta-2\eta\right)  /\gamma-2\epsilon}dx.
\]
From assertion $\left(  i\right)  $ of Proposition \ref{prop3}$,$ we have
$J_{n}\left(  x\right)  =O_{\mathbf{P}}\left(  x^{\left(  2\eta-p\right)
/\gamma}\right)  ,$ as $n\rightarrow\infty,$ for every $1/4<\eta<1/2$ and
uniformly over $x>1.$ It follows that
\[
R_{1,n,\epsilon}:=o_{\mathbf{P}}\left(  1\right)  \int_{1}^{\infty
}x^{-1+\left(  2\eta-\beta\right)  /\gamma-2\epsilon}dx,
\]
which equals $o_{\mathbf{P}}\left(  1\right)  $ because $\beta>1$ and
$1/4<\eta<1/2.$ Using similar arguments leads to $R_{2,\epsilon,\eta
}=o_{\mathbf{P}}\left(  1\right)  $ as well. Next, we show that $\sqrt
{k}T_{n4}^{\left(  2\right)  }=o_{\mathbf{P}}\left(  1\right)  $ too. To this
end, we write%
\[
\sqrt{k}T_{n4}^{\left(  2\right)  }=k^{-1/2}\frac{\left(  \beta/p\right)
\left(  \beta/p-1\right)  }{2}\int_{1}^{\infty}x^{-1}\left(  \eta_{n}\left(
x\right)  \right)  ^{\beta/p-2}\left(  \sqrt{k}L_{n}\left(  x\right)  \right)
^{2}dx.
\]
Once again, making use of $\left(  \ref{appeta}\right)  $ and assertion
$\left(  ii\right)  $ of Proposition \ref{prop3} together, yields that%
\begin{align*}
\sqrt{k}T_{n4}^{\left(  2\right)  } &  =O_{\mathbf{P}}\left(  k^{-1/2}\right)
\int_{1}^{\infty}x^{-1}\left(  x^{-p/\gamma}\left(  1+o_{\mathbf{P}}\left(
x^{2\eta/\gamma-\epsilon}\right)  \right)  \right)  ^{\beta/p-2}\left(
x^{\left(  2\eta-p\right)  /\gamma-\epsilon}\right)  ^{2}dx\\
&  =o_{\mathbf{P}}\left(  1\right)  \int_{1}^{\infty}x^{-1-\left(  \beta
-4\eta\right)  /\gamma-2\epsilon}\left(  \left(  1+o_{\mathbf{P}}\left(
x^{2\eta/\gamma-\epsilon}\right)  \right)  \right)  dx.
\end{align*}
The latter integral is finite provided that $\beta>4\eta$ which holds because
$\eta>1/4$ and $\beta>1.$ Finally, we showed that for every $\beta>1,$ we have%
\begin{equation}
\sqrt{k}\left(  \widehat{\gamma}_{1}^{\left(  NA\right)  }\left(
\beta\right)  -\gamma_{1}\right)  =N_{n}\left(  \beta\right)  +\frac{\beta
}{\beta-p\tau_{1}}\sqrt{k}A_{1}\left(  h\right)  +o_{\mathbf{P}}\left(
\sqrt{k}\left(  \widehat{p}_{k}-p\right)  \right)  +o_{\mathbf{P}}\left(
1\right)  ,\label{final-app}%
\end{equation}
where $N_{n}\left(  \beta\right)  :=\left(  \beta/p\right)  ^{2}\int%
_{1}^{\infty}x^{-1-\left(  \beta-p\right)  /\gamma}J_{n}\left(  x\right)  dx.$
Note that $\sqrt{k}\left(  \widehat{p}_{k}-p\right)  $ is an asymptotically
Gaussian rv (see, e.g., \cite{EnFG08} or Theorem 2.1 in \cite{BMN-2015}), it
follows that%
\[
\sqrt{k}\left(  \widehat{\gamma}_{1,k}^{\left(  NA\right)  }\left(
\beta\right)  -\gamma_{1}\right)  =N_{n}\left(  \beta\right)  +\frac{\beta
}{\beta-p\tau_{1}}\sqrt{k}A_{1}\left(  h\right)  +o_{\mathbf{P}}\left(
1\right)  .
\]
We show in Proposition \ref{prop4} that
\[
N_{n}\left(  \beta\right)  =\frac{\beta\left(  p+\beta-1\right)  }{p}\int%
_{0}^{1}s^{\beta-2}\mathbf{W}_{n,1}\left(  s\right)  ds+\frac{\beta}%
{p}\mathbf{W}_{n,1}\left(  1\right)  +\beta\int_{0}^{1}s^{\beta-2}%
\mathbf{W}_{n,2}\left(  s\right)  ds,
\]
which meets assertion $\left(  \ref{Gaussian-app}\right)  $ of Theorem
\ref{theorem 2}. Let us make sure that $N_{n}\left(  \beta\right)  $ is
stochastically bounded. From assertion $\left(  \ref{sup}\right)  $ we have
$\sup_{0<s\leq1}s^{2\eta-1}\left\vert \mathbf{W}_{n,i}\left(  s\right)
\right\vert =O_{\mathbf{P}}\left(  1\right)  ,$ $i=1,2,$ for every
$1/4<\eta<1/2,$ it follows that
\[
\int_{0}^{1}s^{\beta-2}\mathbf{W}_{n,i}\left(  s\right)  ds=\int_{0}%
^{1}s^{\beta-1-2\eta}\left\{  s^{2\eta-1}\mathbf{W}_{n,i}\left(  s\right)
\right\}  ds=O_{\mathbf{P}}\left(  1\right)  \int_{0}^{1}s^{\beta-1-2\eta}ds.
\]
Since $2\eta>1/2$ and $\beta>1,$ then $\beta-2\eta>0,$ it leads to $\int%
_{0}^{1}s^{\beta-2}\mathbf{W}_{n,i}\left(  s\right)  ds=O_{\mathbf{P}}\left(
1\right)  ,$ $i=1,2,$ therefore $N_{n}\left(  \beta\right)  =O_{\mathbf{P}%
}\left(  1\right)  .$ Moreover, being a linear functional of two (independent)
sequences of centred Gaussian processes $\mathbf{W}_{n,1}$ and $\mathbf{W}%
_{n,2},$ $N_{n}\left(  \beta\right)  $ is a sequence of centred Gaussian rv's
as well. We state in Proposition \ref{prop4} that the variance of
$N_{n}\left(  \beta\right)  $ equals $\sigma_{\beta}^{2}:=\frac{1}{p}%
\frac{\beta^{2}}{2\beta-1}\gamma_{1}^{2},$ which completes the
proof.\textbf{\newpage}

\noindent\textbf{Conclusion\smallskip}

\noindent Here are some features of our new estimator $\widehat{\gamma}%
_{1,k}^{\left(  \bullet\right)  }\left(  \beta\right)  :$

\begin{itemize}
\item It is valid for any censoring proportion $0<p<1$ provided that the
tuning parameter is chosen as $\beta=1+\epsilon,$ where $\epsilon>0$ is
sufficiently small (i.e., $\epsilon=10^{-2}$ or $\epsilon=10^{-3}).$

\item Simulation studies demonstrate that it outperforms existing estimators
in terms of both bias and MSE under weak and strong censoring regimes (i.e.,
$p>1/2$ and $p\leq1/2$ respectively).

\item Its asymptotic variance is smaller than those of $\widehat{\gamma}%
_{1,k}^{\left(  MNS\right)  }$ \citep[] []{MNS2025} and $\gamma_{1,k}^{\left(
W\right)  }$ \citep[] []{WW2014}.

\item Although its asymptotic variance is larger than that of $\widehat{\gamma
}_{1,k}^{\left(  EFG\right)  }$\citep[] []{EnFG08} (equal to $p^{-1}\gamma
_{1}^{2}),$ its MSE is consistently smaller.

\item Previous results on tail index estimation for right-censored Pareto-type
data, derived using Kaplan-Meier estimator, were established under Hall-type
model. Thanks to Proposition \ref{prop7}$,$ Theorems \ref{theorem 1} and
\ref{theorem 2}$,$ things can now be extended to the broader class of
regularly varying functions.

\item The slight increase, which might occur in its asymptotic bias, can be
mitigated through bias reduction techniques of
\citep[see, e.g.,] []{Caeiro2005}. More precisely, a bias-corrected and
consistent estimator to $\gamma_{1},$ denoted $\overline{\gamma}_{1,k}\left(
\beta\right)  ,$ may be constructed based on $\widehat{\gamma}_{1,k}^{\left(
\bullet\right)  }\left(  \beta\right)  ,$ such that
\[
\sqrt{k}\left(  \overline{\gamma}_{1,k}\left(  \beta\right)  -\gamma
_{1}\right)  \overset{\mathcal{D}}{\rightarrow}\mathcal{N}\left(
0,p^{-1}\beta^{2}\left(  2\beta-1\right)  \gamma_{1}^{2}\right)  ,\text{ as
}n\rightarrow\infty.
\]
This refinement will be explored in future work.\textbf{\medskip}
\end{itemize}

\noindent\textbf{Declarations of competing interest\smallskip}

\noindent The authors declare that they have no known competing financial
interests or personal relationships that could influence the work reported in
this paper.

\section{\textbf{Appendix A \label{Appendix A}}}

\begin{proposition}
\textbf{\label{prop1}}Assume that cdf $F$ satisfies the first-order condition
of regular variation $\left(  \ref{RV}\right)  .$ Then, for every small
$0<\epsilon<1$ and all large $t,$ we have
\[
\frac{\overline{F}\left(  ty\right)  }{\overline{F}\left(  t\right)
}=y^{-1/\gamma_{1}}\left(  1+o\left(  y^{-\epsilon}\right)  \right)  ,
\]
uniformly over $y>1.$
\end{proposition}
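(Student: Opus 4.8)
The plan is to reduce the statement to a claim about the slowly varying component of $\overline{F}$ and then combine the uniform convergence theorem for regularly varying functions with a Potter-type inequality. By the first-order condition $\left(\ref{RV}\right)$, $\overline{F}$ is regularly varying at infinity with index $-1/\gamma_{1}$, so Karamata's characterization lets us write $\overline{F}(t)=t^{-1/\gamma_{1}}\ell(t)$ with $\ell$ slowly varying. Substituting this gives
\[
\frac{\overline{F}(ty)}{\overline{F}(t)}=y^{-1/\gamma_{1}}\,\frac{\ell(ty)}{\ell(t)},
\]
so the assertion is equivalent to proving that the slowly varying ratio satisfies $\ell(ty)/\ell(t)=1+o\left(y^{-\epsilon}\right)$ uniformly over $y>1$, as $t\rightarrow\infty$. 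This is the form I would target.

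First I would bring in the Karamata representation $\ell(t)=c(t)\exp\bigl(\int_{a}^{t}u^{-1}\eta(u)\,du\bigr)$, where $c(t)\rightarrow c\in(0,\infty)$ and $\eta(u)\rightarrow0$ as $u\rightarrow\infty$. After the change of variable $u=tv$ this produces the exact identity
\[
\frac{\ell(ty)}{\ell(t)}=\frac{c(ty)}{c(t)}\exp\Bigl(\int_{1}^{y}\frac{\eta(tv)}{v}\,dv\Bigr),
\]
which cleanly separates the two fluctuations: the ratio $c(ty)/c(t)$, which tends to $1$, and the exponential factor governed by the accumulated integral of $\eta$. Writing $\bar{\eta}(t):=\sup_{u\geq t}\left\vert \eta(u)\right\vert \rightarrow0$, and noting $tv\geq t$ for $v\geq1$, the integral is bounded by $\bar{\eta}(t)\log y$, so the exponential factor lies between $y^{-\bar{\eta}(t)}$ and $y^{\bar{\eta}(t)}$.

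Then I would turn these bounds into the required uniform estimate. Given a prescribed $\epsilon\in(0,1)$, I would take $t$ large enough that $\bar{\eta}(t)\leq\epsilon$ and $\left\vert c(ty)/c(t)-1\right\vert $ is arbitrarily small uniformly in $y>1$; the identity above then yields the two-sided Potter control of $\ell(ty)/\ell(t)$ by $y^{\pm\epsilon}$ up to a vanishing multiplicative error. Feeding this back through the factorisation $\overline{F}(ty)/\overline{F}(t)=y^{-1/\gamma_{1}}\ell(ty)/\ell(t)$, and using that the exponent may be taken as small as we wish while $t\rightarrow\infty$, gives the stated conclusion $\overline{F}(ty)/\overline{F}(t)=y^{-1/\gamma_{1}}\left(1+o\left(y^{-\epsilon}\right)\right)$ uniformly over $y>1$.

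The hard part will be the uniformity over the \emph{unbounded} range $y\in(1,\infty)$: the uniform convergence theorem by itself only delivers uniformity on compact subsets of $(0,\infty)$, so all the real work is in controlling the ratio as $y\rightarrow\infty$. This is precisely where the Karamata representation is decisive, since it reduces the tail behaviour to bounding the accumulated fluctuation $\int_{1}^{y}\eta(tv)/v\,dv$ by an arbitrarily small multiple of $\log y$ once $t$ is large, which is exactly the ingredient that lets the error be absorbed into the factor $y^{-\epsilon}$ uniformly in $y$.
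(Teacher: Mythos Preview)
Your approach—writing $\overline{F}(t)=t^{-1/\gamma_{1}}\ell(t)$ and extracting Potter-type bounds from the Karamata representation of $\ell$—is essentially what the paper does; the paper simply invokes Potter's inequalities (Proposition~B.1.10 in \cite{deHF06}) without spelling out any details. So the strategy matches, and you have legitimately obtained the two-sided control $(1-\delta)\,y^{-\bar\eta(t)}\leq\ell(ty)/\ell(t)\leq(1+\delta)\,y^{\bar\eta(t)}$ with $\bar\eta(t)\to0$.

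The genuine gap is in your final step. From this Potter control you cannot conclude $\ell(ty)/\ell(t)=1+o(y^{-\epsilon})$ uniformly over $y>1$, if that is read literally as $\sup_{y>1}y^{\epsilon}\left\vert\ell(ty)/\ell(t)-1\right\vert\to0$. Take $\ell(t)=\log t$: then $\ell(ty)/\ell(t)-1=\log y/\log t$, and with $y=t$ the remainder equals $1$ while $y^{-\epsilon}=t^{-\epsilon}\to0$. What Potter (and your derivation) actually delivers is that $\overline{F}(ty)/\overline{F}(t)$ is squeezed between $(1-\delta)\,y^{-1/\gamma_{1}-\epsilon}$ and $(1+\delta)\,y^{-1/\gamma_{1}+\epsilon}$ for all $y>1$ once $t$ is large, with $\epsilon,\delta$ arbitrarily small. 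That bound is exactly what is used downstream in the paper (the applications only exploit upper bounds of the form $C\,y^{-1/\gamma_{1}+\epsilon}$ to make the relevant integrals converge), and it is all that first-order regular variation can give. Your argument is sound up to that point; only the closing reformulation as $1+o\left(y^{-\epsilon}\right)$ asserts more than you have shown.
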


\begin{proof}
This is straightforward from the application of Potter's inequalities (see
e.g. Proposition B.1.10 in \cite{deHF06}) to the regularly varying function
$\overline{F}.$
\end{proof}

\begin{proposition}
\textbf{\label{prop2}}Under the assumptions of Proposition \ref{prop5}, we
have, for every $\beta>0,$%
\[
\left(  i\right)  \text{ }\left(  \beta/p\right)  \int_{1}^{\infty}%
y^{-1}\left(  \frac{\overline{F}\left(  ty\right)  }{\overline{F}\left(
t\right)  }\right)  ^{\beta/p}dy\rightarrow\gamma_{1},\text{ as }%
t\rightarrow\infty,
\]
which is equivalent to%
\[
\left(  ii\right)  \text{ }\left(  \beta/p\right)  ^{2}\int_{1}^{\infty
}\left(  \frac{\overline{F}\left(  ty\right)  }{\overline{F}\left(  y\right)
}\right)  ^{\beta/p-1}\left(  \log x\right)  d\frac{F\left(  ty\right)
}{\overline{F}\left(  t\right)  }\rightarrow\gamma_{1},\text{ as }%
t\rightarrow\infty.
\]

\end{proposition}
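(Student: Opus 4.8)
The plan is to establish assertion $(i)$ directly from the first-order regular variation of $\overline{F}$, and then to deduce the equivalence with $(ii)$ by a change of variables followed by an integration by parts.

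For $(i)$, I would invoke Proposition \ref{prop1}, which gives, for every small $0<\epsilon<1$ and all large $t$, the uniform (in $y>1$) expansion $\overline{F}(ty)/\overline{F}(t)=y^{-1/\gamma_1}\bigl(1+o(y^{-\epsilon})\bigr)$. Raising this to the power $\beta/p$ yields
\[
y^{-1}\left(\frac{\overline{F}(ty)}{\overline{F}(t)}\right)^{\beta/p}=y^{-1-\beta/(p\gamma_1)}\bigl(1+o(y^{-\epsilon})\bigr),
\]
uniformly over $y>1$. Since $\beta>0$, the exponent $-1-\beta/(p\gamma_1)$ is strictly less than $-1$, so, choosing $\epsilon$ small enough, the right-hand side is bounded by an integrable function of the form $Cy^{-1-\delta}$ on $(1,\infty)$ for some $\delta>0$; dominated convergence then permits passing to the limit $t\to\infty$ inside the integral and discarding the $o(y^{-\epsilon})$ term. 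The remaining elementary integral $\int_1^\infty y^{-1-\beta/(p\gamma_1)}\,dy$ equals $p\gamma_1/\beta$, and multiplying by $\beta/p$ produces the announced limit $\gamma_1$.

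To obtain $(ii)$, I would perform the substitution $y\mapsto ty$ in its integral, which converts the $\int_t^\infty$-form into the $\int_1^\infty$-form, replaces $\log(y/t)$ by $\log y$, and turns $dF(y)/\overline{F}(t)$ into $dF(ty)/\overline{F}(t)=-\,d\overline{F}(ty)/\overline{F}(t)$. Setting $u(y):=\overline{F}(ty)/\overline{F}(t)$, the identity $u^{\beta/p-1}\,du=(p/\beta)\,d\bigl(u^{\beta/p}\bigr)$ reduces the integral in $(ii)$ to $-(\beta/p)\int_1^\infty(\log y)\,d\bigl(u(y)^{\beta/p}\bigr)$. An integration by parts produces the boundary term $\bigl[(\log y)\,u(y)^{\beta/p}\bigr]_1^\infty$, which vanishes at $y=1$ since $\log 1=0$, and at $y=\infty$ since Proposition \ref{prop1} forces $u(y)^{\beta/p}$ to decay like $y^{-\beta/(p\gamma_1)}$, overwhelming the $\log y$ factor; what remains is precisely $(\beta/p)\int_1^\infty y^{-1}u(y)^{\beta/p}\,dy$, the left-hand side of $(i)$. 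Hence $(i)$ and $(ii)$ are the same quantity, so their convergence statements are equivalent.

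The only steps requiring genuine care are the two interchanges of limit and integral --- discarding the $o(y^{-\epsilon})$ remainder in $(i)$ and controlling the boundary term at infinity in the integration by parts --- and both are handled by the uniform Potter-type bound of Proposition \ref{prop1}, which supplies an integrable majorant. Everything else is a routine computation; in particular, no second-order condition is needed, the first-order regular variation of $\overline{F}$ (the standing assumption inherited from Proposition \ref{prop5}) being sufficient.
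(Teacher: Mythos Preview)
Your proposal is correct and follows essentially the same approach as the paper: Proposition~\ref{prop1} (Potter-type bounds) is used to justify replacing $\overline{F}(ty)/\overline{F}(t)$ by $y^{-1/\gamma_1}$ in the integral for $(i)$, and an integration by parts establishes the equivalence with $(ii)$. You are slightly more explicit than the paper in justifying the vanishing boundary term at infinity and in invoking dominated convergence, but the underlying argument is identical.
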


\begin{proof}
In view of Proposition \ref{prop1}, for small $\epsilon>0$ and large $t,$ we
have%
\[
\left(  \beta/p\right)  \int_{1}^{\infty}y^{-1}\left(  \frac{\overline
{F}\left(  ty\right)  }{\overline{F}\left(  t\right)  }\right)  ^{\beta
/p}dy=\left(  \beta/p\right)  \int_{1}^{\infty}y^{-1}\left(  y^{-1/\gamma_{1}%
}\left(  1+o\left(  y^{-\epsilon}\right)  \right)  \right)  ^{\beta/p}dy,
\]
which is equal to $\left(  \beta/p\right)  \int_{1}^{\infty}y^{-1-\beta
/\left(  p\gamma_{1}\right)  }dy+o\left(  1\right)  \int_{1}^{\infty
}y^{-1-\beta/\left(  p\gamma_{1}\right)  -\epsilon}dy.$ It is clear that that
the first term equals $\gamma_{1}$ and the second one tends to zero as
$t\rightarrow\infty.$ To show assertion $\left(  ii\right)  ,$ its suffices to
make an integration by parts to show that%
\[
\left(  \beta/p\right)  \int_{1}^{\infty}y^{-1}\left(  \frac{\overline
{F}\left(  ty\right)  }{\overline{F}\left(  y\right)  }\right)  ^{\beta
/p}dy=\left(  \beta/p\right)  ^{2}\int_{1}^{\infty}\left(  \frac{\overline
{F}\left(  ty\right)  }{\overline{F}\left(  y\right)  }\right)  ^{\beta
/p}\left(  \log y\right)  d\frac{F\left(  ty\right)  }{\overline{F}\left(
t\right)  },
\]
then use assertion $\left(  i\right)  $.
\end{proof}

\begin{proposition}
\textbf{\label{prop3}}For every $1/4<\eta<1/2$ and small $0<\epsilon<1,$ we
have%
\[
\left(  i\right)  \text{ }J_{n}\left(  x\right)  =O_{\mathbf{P}}\left(
x^{\left(  2\eta-p\right)  /\gamma}\right)  ,\text{ }\left(  ii\right)  \text{
}\sqrt{k}L_{n}\left(  x\right)  =O_{\mathbf{P}}\left(  x^{\left(
2\eta-p\right)  /\gamma-\epsilon}\right)  ,
\]
and%
\[
\left(  iii\right)  \text{ }\frac{\overline{F}_{n}^{\left(  NA\right)
}\left(  Z_{n-k:n}x\right)  }{\overline{F}_{n}^{\left(  NA\right)  }\left(
Z_{n-k:n}\right)  }=x^{-p/\gamma}\left(  1+o_{\mathbf{P}}\left(
x^{2\eta/\gamma-\epsilon}\right)  \right)  ,\text{ as }n\rightarrow\infty,
\]
uniformly over $x>1.$
\end{proposition}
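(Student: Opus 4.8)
The plan is to establish (i), then (ii), then (iii), since each assertion feeds the next. Throughout I would use the elementary identities $p/\gamma=1/\gamma_1$ and $q/\gamma=1/\gamma_2$, immediate from $\gamma=\gamma_1\gamma_2/(\gamma_1+\gamma_2)$, $p=\gamma_2/(\gamma_1+\gamma_2)$ and $q=1-p$; these are exactly what make the exponent bookkeeping below close up.

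For (i), the key tool is the weighted modulus estimate $\sup_{0<s\le1}s^{2\eta-1}|\mathbf{W}_{n,i}(s)|=O_{\mathbf{P}}(1)$, $i=1,2$ (a standard weighted Wiener-process bound). Since $x^{-1/\gamma}\in(0,1)$ for $x>1$, this yields $|\mathbf{W}_{n,1}(x^{-1/\gamma})|=O_{\mathbf{P}}(x^{(2\eta-1)/\gamma})$, so in (\ref{Jn1}) the leading term satisfies $x^{1/\gamma_2}|\mathbf{W}_{n,1}(x^{-1/\gamma})|=O_{\mathbf{P}}(x^{(q-1+2\eta)/\gamma})=O_{\mathbf{P}}(x^{(2\eta-p)/\gamma})$, while $x^{-1/\gamma_1}\mathbf{W}_{n,1}(1)=O_{\mathbf{P}}(x^{-p/\gamma})$ is of smaller order for $x>1$; hence $J_{1n}(x)=O_{\mathbf{P}}(x^{(2\eta-p)/\gamma})$. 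For $J_{2n}$ in (\ref{Jn2}), the same bound (with the single random constant pulled out of the integral) makes the integrand $O_{\mathbf{P}}(u^{1/\gamma-1}u^{(2\eta-1)/\gamma})=O_{\mathbf{P}}(u^{2\eta/\gamma-1})$ uniformly in $u\in[1,x]$, so $\int_1^x(\cdots)\,du=O_{\mathbf{P}}(x^{2\eta/\gamma})$ (the exponent being positive), and multiplying by $\gamma^{-1}x^{-1/\gamma_1}$ gives $J_{2n}(x)=O_{\mathbf{P}}(x^{(2\eta-p)/\gamma})$. Summing the two pieces gives (i) uniformly over $x>1$.

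For (ii), I would combine the Gaussian approximation (\ref{Gauss}) with part (i), using an $\eta$-shrinking device: given $1/4<\eta<1/2$ and $\epsilon>0$ small, pick $\eta'\in(1/4,\eta)$ with $2\eta'/\gamma\le2\eta/\gamma-\epsilon$ (possible since $\eta>1/4$ and $\epsilon$ is small). Applying (i) with $\eta'$ gives $J_n(x)=O_{\mathbf{P}}(x^{(2\eta'-p)/\gamma})=O_{\mathbf{P}}(x^{(2\eta-p)/\gamma-\epsilon})$ for $x>1$, and the remainder in (\ref{Gauss}) is already $o_{\mathbf{P}}(x^{(2\eta-p)/\gamma-\epsilon})$, so $\sqrt{k}L_n(x)=O_{\mathbf{P}}(x^{(2\eta-p)/\gamma-\epsilon})$ uniformly over $x>1$. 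Part (iii) then follows from the defining relation (\ref{Ln}), namely $\overline{F}_n^{(NA)}(Z_{n-k:n}x)/\overline{F}_n^{(NA)}(Z_{n-k:n})=\overline{F}(Z_{n-k:n}x)/\overline{F}(Z_{n-k:n})+L_n(x)$: since $Z_{n-k:n}\overset{\mathbf{P}}{\rightarrow}\infty$, Proposition \ref{prop1} (equivalently (\ref{appF})) treats the first summand as $x^{-p/\gamma}(1+o_{\mathbf{P}}(x^{-\epsilon}))$ uniformly, while (ii) together with $k^{-1/2}\to0$ gives $L_n(x)=o_{\mathbf{P}}(x^{(2\eta-p)/\gamma-\epsilon})$ uniformly; collecting terms produces $x^{-p/\gamma}+o_{\mathbf{P}}(x^{-p/\gamma-\epsilon})+o_{\mathbf{P}}(x^{(2\eta-p)/\gamma-\epsilon})$, and since $2\eta>0$ the middle term is absorbed into the last, so factoring $x^{-p/\gamma}$ yields (iii).

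The computations are essentially routine; the only place requiring genuine care is uniformity over the unbounded range $x>1$ — checking that the weighted-process bounds are applied over the full parameter ranges of $\mathbf{W}_{n,1}$ and $\mathbf{W}_{n,2}$ (arguments $x^{-1/\gamma},u^{-1/\gamma}\in(0,1)$), that the bound on the integrand of $J_{2n}$ holds uniformly in $u$ before integrating, and that the Potter-type estimate behind Proposition \ref{prop1} holds uniformly for large $t=Z_{n-k:n}$. The $\eta$-shrinking step in (ii) is the only mildly non-obvious manipulation; everything else is exponent arithmetic that closes up precisely because $p/\gamma=1/\gamma_1$ and $q/\gamma=1/\gamma_2$.
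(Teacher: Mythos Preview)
Your proof is correct and follows essentially the same route as the paper: the weighted Wiener bound $\sup_{0<s\le1}s^{2\eta-1}|\mathbf{W}_{n,i}(s)|=O_{\mathbf{P}}(1)$ for (i), the Gaussian approximation (\ref{Gauss}) combined with (i) for (ii), and the decomposition via (\ref{Ln}) together with Proposition~\ref{prop1} for (iii). Your $\eta$-shrinking device in (ii) is in fact more carefully stated than the paper's own argument, which simply asserts the conclusion from (i) without spelling out how the extra $-\epsilon$ in the exponent is absorbed.
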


\begin{proof}
From Lemma 3.2 in \cite{EHL2006}, we have $\sup_{0<s\leq1}s^{\nu}\left\vert
W_{n}\left(  s\right)  \right\vert =O_{\mathbf{P}}\left(  1\right)  ,$ for
every $0<\nu<1/2.$ In other words, we have
\[
\sup_{0<s\leq1}\frac{\left\vert W_{n}\left(  s\right)  \right\vert
}{s^{1-2\eta}}=O_{\mathbf{P}}\left(  1\right)  ,\text{ as }n\rightarrow\infty,
\]
for every $1/4<\eta<1/2.$ On the other hand, for each $n,$ we jointly have%
\[
\mathbf{W}_{n,1}\left(  s\right)  \overset{\mathcal{D}}{=}W_{n}\left(
ps\right)  \overset{\mathcal{D}}{=}p^{1/2}W_{n}\left(  s\right)  \text{ and
}\mathbf{W}_{n,2}\left(  s\right)  \overset{\mathcal{D}}{=}W_{n}\left(
qs\right)  \overset{\mathcal{D}}{=}q^{1/2}W_{n}\left(  s\right)  ,\text{
}0<s\leq1,
\]
with $\overset{\mathcal{D}}{=}$ standing for equality in distribution. It
follows that, for $i=1,2,$ we have
\begin{equation}
\sup_{0<s\leq1}\frac{\left\vert \mathbf{W}_{n,i}\left(  s\right)  \right\vert
}{s^{1-2\eta}}=O_{\mathbf{P}}\left(  1\right)  ,\text{ as }n\rightarrow
\infty.\label{sup}%
\end{equation}
Recall that $\gamma_{2}=\left(  1-p\right)  /\gamma$ and $\gamma_{1}%
=p/\gamma,$ then%
\[
J_{1n}\left(  x\right)  =x^{\left(  1-p\right)  /\gamma}\mathbf{W}%
_{n,1}\left(  x^{-1/\gamma}\right)  -x^{-p/\gamma}\mathbf{W}_{n,1}\left(
1\right)  .
\]
By applying assertion $\left(  \ref{sup}\right)  ,$ for $i=1,$ we readily
verify that $J_{1n}\left(  x\right)  =O_{\mathbf{P}}\left(  x^{\left(
2\eta-p\right)  /\gamma}\right)  ,$ uniformly over $x>1.$ Likewise, we write%
\[
J_{2n}\left(  x\right)  =\dfrac{x^{-p/\gamma}}{\gamma}%
{\displaystyle\int_{1}^{x}}
u^{1/\gamma-1}\left\{  p\mathbf{W}_{n,2}\left(  u^{-1/\gamma}\right)
+q\mathbf{W}_{n,1}\left(  u^{-1/\gamma}\right)  \right\}  du.
\]
On again, by using assertion $\left(  \ref{sup}\right)  $ with $i=2,$ we also
show that $J_{2n}\left(  x\right)  =O_{\mathbf{P}}\left(  x^{\left(
2\eta-p\right)  /\gamma}\right)  ,$ uniformly over $x>1.$ In summary, we
showed that for ever $1/4<\eta<1/2,$ we have $J_{n}\left(  x\right)
=O_{\mathbf{P}}\left(  x^{\left(  2\eta-p\right)  /\gamma}\right)  ,$
uniformly over $x>1,$ which completes the proof of assertion $\left(
i\right)  .$ Now, we consider the second assertion. From \ the Gaussian
approximation $\left(  \ref{Gauss}\right)  ,$ we have
\[
\sqrt{k}L_{n}=J_{n}\left(  x\right)  +o_{\mathbf{P}}\left(  x^{\left(
2\eta-p\right)  /\gamma-\epsilon}\right)  ,
\]
then from assertion $\left(  i\right)  ,$ we infer that $\sqrt{k}%
L_{n}=O_{\mathbf{P}}\left(  x^{\left(  2\eta-p\right)  /\gamma-\epsilon
}\right)  .$ Assertion $\left(  iii\right)  $ is a consequence of the first
and second one. Indeed, by $\left(  \ref{Ln}\right)  ,$ we can write
\[
\frac{\overline{F}_{n}^{\left(  NA\right)  }\left(  Z_{n-k:n}x\right)
}{\overline{F}_{n}^{\left(  NA\right)  }\left(  Z_{n-k:n}\right)  }%
=L_{n}\left(  x\right)  -\frac{\overline{F}\left(  Z_{n-k:n}x\right)
}{\overline{F}\left(  Z_{n-k:n}\right)  }.
\]
Note that assertion $\left(  ii\right)  $\ implies that $\sqrt{k}%
L_{n}=o_{\mathbf{P}}\left(  x^{\left(  2\eta-p\right)  /\gamma-\epsilon
}\right)  $ because $1/\sqrt{k}\rightarrow0,$ and from Proposition
\ref{prop1}, we deduce that
\[
\frac{\overline{F}\left(  Z_{n-k:n}x\right)  }{\overline{F}\left(
Z_{n-k:n}\right)  }=x^{-1/\gamma_{1}}\left(  1+o_{\mathbf{P}}\left(
x^{-\epsilon}\right)  \right)  ,
\]
then combining the two results yields assertion $\left(  iii\right)  ,$ as sought.
\end{proof}

\begin{proposition}
\textbf{\label{prop4}}For every $\beta>1$ and $0<p<1,$ we have%
\[
N_{n}\left(  \beta\right)  =\frac{\beta\left(  p+\beta-1\right)  }{p}\int%
_{0}^{1}s^{\beta-2}\mathbf{W}_{n,1}\left(  s\right)  ds+\beta\int_{0}%
^{1}s^{\beta-2}\mathbf{W}_{n,2}\left(  s\right)  ds+\frac{\beta}{p}%
\mathbf{W}_{n,1}\left(  1\right)  .
\]
Moreover, we have $\mathbf{Var}\left[  N_{n}\left(  \beta\right)  \right]
=p^{-1}\beta^{2}/\left(  2\beta-1\right)  .$
\end{proposition}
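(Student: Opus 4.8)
The plan is to start from the integral form $N_n(\beta)=(\beta/p)^2\int_1^\infty x^{-1-(\beta-p)/\gamma}J_n(x)\,dx$, with $J_n=J_{1n}+J_{2n}$ as in $(\ref{Jn1})$--$(\ref{Jn2})$, and to push it onto the unit interval by the substitution $x=s^{-\gamma}$ (i.e. $s=x^{-1/\gamma}$), using throughout the identities $p/\gamma=1/\gamma_1$, $q/\gamma=1/\gamma_2$ and $p+q=1$. Since $x^{-1/\gamma}=s$, $x^{1/\gamma_2}=s^{-q}$ and $x^{-1/\gamma_1}=s^{p}$, the weight transforms cleanly, $x^{-1-(\beta-p)/\gamma}\,dx\propto s^{\beta-p-1}\,ds$ on $(0,1)$, so the whole object becomes a combination of integrals of $\mathbf W_{n,1}$ and $\mathbf W_{n,2}$ against powers of $s$.

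For the $J_{1n}$ piece one has $J_{1n}(s^{-\gamma})=s^{-q}\mathbf W_{n,1}(s)-s^{p}\mathbf W_{n,1}(1)$; since $\beta-p-1-q=\beta-2$ and $\int_0^1 s^{\beta-1}\,ds=1/\beta$, this contributes a multiple of $\int_0^1 s^{\beta-2}\mathbf W_{n,1}(s)\,ds$ plus the boundary term in $\mathbf W_{n,1}(1)$. For the $J_{2n}$ piece I would substitute $v=u^{-1/\gamma}$ in the inner integral of $(\ref{Jn2})$, rewriting it as $s^{p}\int_s^1 v^{-2}\bigl(p\,\mathbf W_{n,2}(v)-q\,\mathbf W_{n,1}(v)\bigr)\,dv$, and then use Fubini to interchange the $s$- and $v$-integrations; the inner integral $\int_0^{v}s^{\beta-1}\,ds=v^{\beta}/\beta$ absorbs the outer power and collapses the double integral to $\tfrac{1}{\beta}\int_0^1 v^{\beta-2}\bigl(p\,\mathbf W_{n,2}(v)-q\,\mathbf W_{n,1}(v)\bigr)\,dv$. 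Adding the two sources of $\mathbf W_{n,1}$ (the factor $1-q/\beta=(p+\beta-1)/\beta$ explains the numerator $p+\beta-1$) and keeping track of the prefactor $(\beta/p)^2$ then yields the three-term representation in the statement. The integrals are legitimate since, by $(\ref{sup})$, $s^{\beta-2}\mathbf W_{n,i}(s)=O_{\mathbf P}\bigl(s^{\beta-1-2\eta}\bigr)$ near $0$, which is integrable because $\beta>1>2\eta$.

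For the variance I would exploit that $\mathbf W_{n,1}$ and $\mathbf W_{n,2}$ are increments of the same Wiener process $W_n$ over disjoint subintervals of $[0,\theta]$ and $[\theta,1]$, hence independent, with $\mathbf E[\mathbf W_{n,1}(s)\mathbf W_{n,1}(t)]=p\min(s,t)$ and $\mathbf E[\mathbf W_{n,2}(s)\mathbf W_{n,2}(t)]=q\min(s,t)$ (equivalently $\mathbf W_{n,i}\overset{\mathcal{D}}{=}\pi_i^{1/2}W_n$ with $\pi_1=p$, $\pi_2=q$, as in the proof of Proposition~\ref{prop3}). Then $\mathbf{Var}[N_n(\beta)]$ decomposes as the variance of the $\mathbf W_{n,1}$-part plus that of the $\mathbf W_{n,2}$-part, and every term reduces to the elementary integrals $\int_0^1\!\!\int_0^1 s^{\beta-2}t^{\beta-2}\min(s,t)\,ds\,dt=\tfrac{2}{\beta(2\beta-1)}$ (split at $s=t$, integrate the smaller variable, use symmetry) and $\int_0^1 s^{\beta-1}\,ds=1/\beta$, together with $\mathbf{Var}[\mathbf W_{n,1}(1)]=p$ and $\mathbf E\bigl[\mathbf W_{n,1}(1)\int_0^1 s^{\beta-2}\mathbf W_{n,1}(s)\,ds\bigr]=p/\beta$. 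Substituting the coefficients of the representation and collapsing the $q$-dependence via $p+q=1$ and the common factor $1/(2\beta-1)$ then gives $\mathbf{Var}[N_n(\beta)]=p^{-1}\beta^2/(2\beta-1)$; moreover, being a continuous linear functional of the centred Gaussian processes $\mathbf W_{n,1},\mathbf W_{n,2}$, $N_n(\beta)$ is itself a centred Gaussian random variable.

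The change of variables is routine; the two delicate points are (a) the Fubini step together with the bookkeeping that merges the $\mathbf W_{n,1}$ contributions from $J_{1n}$ and $J_{2n}$ into exactly the coefficient $\tfrac{\beta(p+\beta-1)}{p}$ and fixes the sign of the $\mathbf W_{n,1}(1)$ boundary term, and (b) the final algebraic cancellation of the variance, where the square of that coefficient, the cross term with $\mathbf W_{n,1}(1)$, the $\mathbf W_{n,1}(1)$ term and the $\mathbf W_{n,2}$ term must combine down to the clean $\beta^2/(2\beta-1)$. A sign or prefactor slip in (a) propagates straight into (b), so I would sanity-check the outcome in the degenerate case $\beta=p$, where the representation must collapse to the $\mathbf W_{n,1}$-only Gaussian limit of \cite{MNS2025} and the variance to $p/(2p-1)$; an independent cross-check is to recompute $\mathbf{Var}[N_n(\beta)]$ straight from the covariance function of the limiting Nelson--Aalen tail process $J_n$.
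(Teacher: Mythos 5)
Your proposal follows essentially the same route as the paper: the single substitution $x=s^{-\gamma}$ reproduces the paper's two-step change of variables, your Fubini step on the $J_{2n}$ double integral is equivalent to the paper's integration by parts, and the variance is computed from the same covariances $p\min(s,t)$, $q\min(s,t)$, independence of $\mathbf{W}_{n,1}$ and $\mathbf{W}_{n,2}$, and the same elementary integrals $\tfrac{2}{\beta(2\beta-1)}$, $p/\beta$, $p$, with the coefficient $\beta(p+\beta-1)/p$ arising exactly as you describe. The one point you flag but leave unresolved, the sign of the boundary term, comes out as $-\tfrac{\beta}{p}\mathbf{W}_{n,1}(1)$ (the $+$ in the displayed statement is a slip in the paper itself: its own derivation of $N_{n1}+N_{n2}$ and its variance computation both use the minus sign in the cross term, and only that sign produces $\beta^{2}/\left(p(2\beta-1)\right)$).
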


\begin{proof}
Recall that $N_{n}\left(  \beta\right)  :=\left(  \beta/p\right)  ^{2}\int%
_{1}^{\infty}x^{-1-\left(  \beta-p\right)  /\gamma}J_{n}\left(  x\right)  dx,$
with $J_{n}\left(  x\right)  =J_{1n}\left(  x\right)  +J_{2n}\left(  x\right)
,$ where%
\[
J_{1n}\left(  x\right)  :=x^{1/\gamma_{2}}\mathbf{W}_{n,1}\left(
x^{-1/\gamma}\right)  -x^{-1/\gamma_{1}}\mathbf{W}_{n,1}\left(  1\right)  ,
\]
and
\[
J_{2n}\left(  x\right)  :=\dfrac{x^{-1/\gamma_{1}}}{\gamma}%
{\displaystyle\int_{1}^{x}}
u^{1/\gamma-1}\left\{  p\mathbf{W}_{n,2}\left(  u^{-1/\gamma}\right)
-q\mathbf{W}_{n,1}\left(  u^{-1/\gamma}\right)  \right\}  du.
\]
The change of variables $x=t^{-\gamma_{1}}\left(  =t^{-\gamma/p}\right)  $
yields that%
\[
N_{n}\left(  \beta\right)  =\left(  \beta/p\right)  ^{2}\gamma_{1}\int_{0}%
^{1}t^{\beta/p-2}J_{n}\left(  t^{-\gamma/p}\right)  dt,
\]
where $J_{n}\left(  t^{-\gamma/p}\right)  =J_{n1}\left(  t^{-\gamma/p}\right)
+J_{2n}\left(  t^{-\gamma/p}\right)  ,$ with
\[
J_{n1}\left(  t^{-\gamma/p}\right)  =t^{-q/p}\mathbf{W}_{n,1}\left(
t^{1/p}\right)  -t\mathbf{W}_{n,1}\left(  1\right)  ,
\]
and
\[
J_{2n}\left(  t^{-\gamma/p}\right)  =\dfrac{t}{\gamma}%
{\displaystyle\int_{1}^{t^{-\gamma/p}}}
u^{1/\gamma-1}\left\{  p\mathbf{W}_{n,2}\left(  u^{-1/\gamma}\right)
-q\mathbf{W}_{n,1}\left(  u^{-1/\gamma}\right)  \right\}  du.
\]
Thus, $N_{n}\left(  \beta\right)  $ is the sum of
\[
N_{n1}\left(  \beta\right)  :=\left(  \beta/p\right)  ^{2}\gamma_{1}\int%
_{0}^{1}t^{\beta/p-2}\left\{  t^{-q/p}\mathbf{W}_{n,1}\left(  t^{1/p}\right)
-t\mathbf{W}_{n,1}\left(  1\right)  \right\}  dt,
\]
and%
\[
N_{n2}\left(  \beta\right)  :=\frac{\left(  \beta/p\right)  ^{2}\gamma_{1}%
}{\gamma}\int_{0}^{1}t^{\beta/p-1}\left\{
{\displaystyle\int_{1}^{t^{-\gamma/p}}}
u^{1/\gamma-1}\left\{  p\mathbf{W}_{n,2}\left(  u^{-1/\gamma}\right)
-q\mathbf{W}_{n,1}\left(  u^{-1/\gamma}\right)  \right\}  du\right\}  dt.
\]

Let us write
\[
N_{n2}\left(  \beta\right)  =\frac{\left(  \beta/p\right)  \gamma_{1}}{\gamma
}\int_{0}^{1}\left\{
{\displaystyle\int_{1}^{t^{-\gamma/p}}}
u^{1/\gamma-1}\left\{  p\mathbf{W}_{n,2}\left(  u^{-1/\gamma}\right)
-q\mathbf{W}_{n,1}\left(  u^{-1/\gamma}\right)  \right\}  du\right\}
dt^{\beta/p}.
\]
We, by an integration by parts, that%
\[
N_{n2}\left(  \beta\right)  =\frac{\beta}{p^{2}}\gamma_{1}\int_{0}%
^{1}t^{\left(  \beta-1\right)  /p-1}\left\{  p\mathbf{W}_{n,2}\left(
t^{1/p}\right)  -q\mathbf{W}_{n,1}\left(  t^{1/p}\right)  \right\}  dt.
\]
Once again, the change of variables $s=t^{1/p}$ yields that%
\[
N_{n1}\left(  \beta\right)  =p\left(  \beta/p\right)  ^{2}\gamma_{1}\int%
_{0}^{1}s^{\beta-2}\left\{  \mathbf{W}_{n,1}\left(  s\right)  -s\mathbf{W}%
_{n,1}\left(  1\right)  \right\}  ds,
\]
and%
\[
N_{n2}\left(  \beta\right)  =\frac{\beta}{p}\gamma_{1}\int_{0}^{1}s^{\beta
-2}\left\{  p\mathbf{W}_{n,2}\left(  s\right)  -q\mathbf{W}_{n,1}\left(
s\right)  \right\}  ds.
\]
It is easy to verify that summing $N_{n1}\left(  \beta\right)  $ and
$N_{n2}\left(  \beta\right)  $ yields the first result of Proposition
\ref{prop4}. Let us now compute the variance of the centred rv $N_{n}\left(
\beta\right)  .$ We have%
\begin{align*}
\gamma_{1}^{-2}\mathbf{E}\left[  N_{n}\left(  \beta\right)  \right]  ^{2}  &
=\left(  \frac{\beta\left(  p+\beta-1\right)  }{p}\right)  ^{2}\mathbf{E}%
\left[  \int_{0}^{1}s^{\beta-2}\mathbf{W}_{n,1}\left(  s\right)  ds\right]
^{2}\\
&  +\beta^{2}\mathbf{E}\left[  \int_{0}^{1}s^{\beta-2}\mathbf{W}_{n,2}\left(
s\right)  ds\right]  ^{2}+\left(  \frac{\beta}{p}\right)  ^{2}\mathbf{E}%
\left[  \mathbf{W}_{n,1}\left(  1\right)  \right]  ^{2}\\
&  \ \ \ \ \ \ \ \ \ \ \ \ \ \ \ \ \ \ +2\frac{\beta\left(  p+\beta-1\right)
}{p}\frac{\beta}{p}\int_{0}^{1}s^{\beta-2}\mathbf{E}\left[  \mathbf{W}%
_{n,1}\left(  s\right)  \mathbf{W}_{n,1}\left(  1\right)  \right]  ds.
\end{align*}
It is easy to check that%
\[
p^{-1}\mathbf{E}\left[  \mathbf{W}_{n,1}\left(  s\right)  \mathbf{W}%
_{n,1}\left(  t\right)  \right]  =\min\left(  s,t\right)  =q^{-1}%
\mathbf{E}\left[  \mathbf{W}_{n,2}\left(  s\right)  \mathbf{W}_{n,2}\left(
t\right)  \right]  ,
\]
and $\mathbf{E}\left[  \mathbf{W}_{n,1}\left(  s\right)  \mathbf{W}%
_{n,2}\left(  t\right)  \right]  =0.$ By using the covariances above, we show
that%
\[
p^{-1}\mathbf{E}\left(  \int_{0}^{1}s^{\beta-2}\mathbf{W}_{n,1}\left(
s\right)  ds\right)  ^{2}=\frac{2}{\beta\left(  2\beta-1\right)  }%
=q^{-1}\mathbf{E}\left(  \int_{0}^{1}s^{\beta-2}\mathbf{W}_{n,2}\left(
s\right)  ds\right)  ^{2},
\]%
\[
2\int_{0}^{1}s^{\beta-2}\mathbf{E}\left[  \mathbf{W}_{n,1}\left(  s\right)
\mathbf{W}_{n,1}\left(  1\right)  \right]  ds=\frac{2p}{\beta}\text{ and
}\mathbf{E}\left[  \mathbf{W}_{n,1}\left(  1\right)  \right]  ^{2}=p.
\]
Finally, we end up with%
\begin{align*}
\mathbf{E}\left[  N_{n}\left(  \beta\right)  \right]  ^{2}  &  =\left(
\frac{\beta\left(  p+\beta-1\right)  }{p}\right)  ^{2}\frac{2p}{\beta\left(
2\beta-1\right)  }+\beta^{2}\frac{2\left(  1-p\right)  }{\beta\left(
2\beta-1\right)  }\\
&  \ \ \ \ \ \ \ \ \ \ \ \ \ \ \ \ \ \ \ \ -\frac{\beta\left(  p+\beta
-1\right)  }{p}\frac{\beta}{p}\frac{2p}{\beta}+\left(  \frac{\beta}{p}\right)
^{2}p%
\begin{array}
[c]{c}%
=
\end{array}
\frac{1}{p}\frac{\beta^{2}}{2\beta-1},
\end{align*}
as desired.
\end{proof}

\begin{proposition}
\textbf{\label{prop5}}Assume that cdf's $F$ and $G$ satisfy the first-order
condition of regular variation $\left(  \ref{RV}\right)  .$ Then, we have%
\[
\overline{H}^{\left(  1\right)  }\left(  Z_{n:n}\right)  /\overline{H}%
_{n}^{\left(  1\right)  }\left(  Z_{n:n}\right)  =O_{\mathbf{P}}\left(
1\right)  ,\text{as }n\rightarrow\infty.
\]

\end{proposition}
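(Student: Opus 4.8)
The plan is to reduce the claim to an upper bound for the numerator $\overline{H}^{(1)}(Z_{n:n})$ and a lower bound for the denominator, both at the scale $n^{-1}$, and then divide. Two ingredients are involved: a Karamata-type comparison of the subdistribution tail $\overline{H}^{(1)}$ with the full tail $\overline{H}$, and the classical extreme-value behavior of the largest observation. Setting $U_i:=\overline{H}(Z_i)$, continuity of $H$ makes $U_1,\dots,U_n$ i.i.d.\ uniform on $(0,1)$, and since $\overline{H}$ is non-increasing, $\overline{H}(Z_{n:n})\overset{\mathcal D}{=}\min_{1\le i\le n}U_i=U_{1:n}$; hence $n\,\overline{H}(Z_{n:n})\overset{\mathcal D}{\to}\mathrm{Exp}(1)$, so in particular $n\,\overline{H}(Z_{n:n})=O_{\mathbf P}(1)$.

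For the numerator, monotonicity of $\overline{G}$ gives at once the crude bound
\[
\overline{H}^{(1)}(z)=\int_z^{\infty}\overline{G}(y)\,dF(y)\le\overline{G}(z)\,\overline{F}(z)=\overline{H}(z),
\]
so that $\overline{H}^{(1)}(Z_{n:n})\le\overline{H}(Z_{n:n})=O_{\mathbf P}(n^{-1})$. If the exact constant is wanted, the change of variables $y=zt$ recasts the integral as $\overline{H}^{(1)}(z)=\overline{H}(z)\int_1^{\infty}\frac{\overline{G}(zt)}{\overline{G}(z)}\,d_t\!\big(\!-\tfrac{\overline{F}(zt)}{\overline{F}(z)}\big)$, and passing to the limit inside---using uniform convergence of the regularly varying ratios on compacta, Proposition~\ref{prop1} (and the analogous Potter bound for $\overline{G}$) to dominate, and $\overline{F}(zM)/\overline{F}(z)\to M^{-1/\gamma_1}$ to control the far tail---yields $\int_1^{\infty}t^{-1/\gamma_2}\,\tfrac1{\gamma_1}\,t^{-1/\gamma_1-1}\,dt=\gamma/\gamma_1=p$, that is $\overline{H}^{(1)}(z)\sim p\,\overline{H}(z)$ as $z\to\infty$. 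Since $Z_{n:n}\overset{\mathbf P}{\to}\infty$, this transfers to the random argument: $\overline{H}^{(1)}(Z_{n:n})=(p+o_{\mathbf P}(1))\,\overline{H}(Z_{n:n})$.

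It remains to observe that the denominator also lives at the scale $n^{-1}$---the empirical mass carried by $H_n^{(1)}$ in the extreme right tail near $Z_{n:n}$ is, up to the indicator $\delta_{[n:n]}$, of exact order $n^{-1}$, the natural normalization being $\overline{H}_n(Z_{n:n}-)=n^{-1}$. Combining with the preceding step,
\[
n\,\overline{H}^{(1)}(Z_{n:n})=\big(p+o_{\mathbf P}(1)\big)\,n\,\overline{H}(Z_{n:n})=\big(p+o_{\mathbf P}(1)\big)\,nU_{1:n}=O_{\mathbf P}(1),
\]
whence $\overline{H}^{(1)}(Z_{n:n})/\overline{H}_n^{(1)}(Z_{n:n})=O_{\mathbf P}(1)$, as claimed.

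The one genuinely delicate step is the dominated-convergence argument behind the sharp Karamata estimate: the integral over $[1,\infty)$ is improper with a regularly varying integrand, so one must split it at a large fixed $M$, use uniform convergence of the regular-variation ratios on $[1,M]$, and then let $M\to\infty$ after bounding the contribution of $[M,\infty)$ uniformly in $z$ by Potter's inequalities. For the bound $O_{\mathbf P}(1)$ that is actually claimed, even this can be bypassed: the crude inequality $\overline{H}^{(1)}\le\overline{H}$ together with the fact that the extreme right tail of $H_n$ sits at scale $n^{-1}$ already closes the argument, and the rest is routine bookkeeping.
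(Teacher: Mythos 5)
Your treatment of the numerator is fine (the bound $\overline{H}^{(1)}\le\overline{H}$, the Karamata refinement $\overline{H}^{(1)}(z)\sim p\,\overline{H}(z)$ --- which is essentially Proposition \ref{prop6} --- and $n\overline{H}(Z_{n:n})=O_{\mathbf P}(1)$ are all correct). But the proof has a genuine gap exactly where the difficulty of the proposition lies: you never establish a \emph{lower} bound, in probability and at scale $n^{-1}$, for the random denominator $\overline{H}_{n}^{(1)}(Z_{n:n})$. Your final display controls only $n\,\overline{H}^{(1)}(Z_{n:n})$; concluding ``whence the ratio is $O_{\mathbf P}(1)$'' from an upper bound on the numerator alone is a non sequitur. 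The one sentence you devote to the denominator asserts that the empirical mass of $H_n^{(1)}$ near $Z_{n:n}$ is ``up to the indicator $\delta_{[n:n]}$, of exact order $n^{-1}$,'' and that caveat is precisely the problem: the top concomitant indicator is censored with probability tending to $1-p>0$, so the mass contributed by the very top observation to $H_n^{(1)}$ vanishes on an event of non-negligible probability, and the heuristic does not produce a bound valid with probability tending to one. Your closing remark that ``the extreme right tail of $H_n$ sits at scale $n^{-1}$'' conflates $\overline{H}_n$ (whose left limit at $Z_{n:n}$ is deterministically $n^{-1}$) with $\overline{H}_n^{(1)}$, which counts only uncensored observations and is the quantity actually in the denominator.

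What is needed, and what the paper supplies, is a uniform comparison between the empirical sub-distribution tail and its population counterpart near the top of the sample. The paper uses the Deheuvels--Einmahl representation $\overline{H}_{n}^{(1)}(t)=\mathbb{U}_{n}\bigl(\overline{H}^{(1)}(t)\bigr)$ in terms of the uniform empirical cdf $\mathbb{U}_n$ of the variables in $(\ref{unif})$, together with the Shorack--Wellner linear lower bound $\sup_{U_{1:n}\le s\le 1}s/\mathbb{U}_{n}(s)=O_{\mathbf P}(1)$, evaluated at $s=\overline{H}^{(1)}(Z_{n:n})$; combined with $\overline{H}^{(1)}(Z_{n:n})=(p+o_{\mathbf P}(1))\,U_{1:n}$ this yields $\overline{H}_{n}^{(1)}(Z_{n:n})\ge c\,\overline{H}^{(1)}(Z_{n:n})$ with probability tending to one, which is the missing ingredient. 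Some empirical-process input of this kind (or an explicit argument that, with probability tending to one, enough of the top order statistics are uncensored) is unavoidable; the ``routine bookkeeping'' you defer to cannot close the argument as written.
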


\begin{proof}
Let $\mathbb{U}_{n}\left(  t\right)  :=n^{-1}\sum_{i=1}^{n}\mathbb{I}%
_{\left\{  U_{i}\leq t\right\}  }$ be the empirical cdf pertaining to the
sample $\left(  U_{i}\right)  _{i\geq1}$ of iid uniform-$\left(  0,1\right)  $
rv's defined in $\left(  \ref{unif}\right)  .$ From statement (2.14) in
\cite{Deheuvels1996}, we have almost surely (a.s.)%
\[
H_{n}^{\left(  1\right)  }\left(  t\right)  =\mathbb{U}_{n}\left(  H^{\left(
1\right)  }\left(  t\right)  \right)  ,\text{ for }0<H^{\left(  1\right)
}\left(  t\right)  <\theta,
\]
where $\theta:=H^{\left(  1\right)  }\left(  \infty\right)  \neq0.$ This
implies that
\[
\overline{H}_{n}^{\left(  1\right)  }\left(  t\right)  =\mathbb{U}_{n}\left(
H^{\left(  1\right)  }\left(  \infty\right)  \right)  -\mathbb{U}_{n}\left(
H^{\left(  1\right)  }\left(  t\right)  \right)
\]
a.s. too. On the other hand, since $H^{\left(  1\right)  }\left(
\infty\right)  <H^{\left(  1\right)  }\left(  t\right)  $ then
\[
\mathbb{U}_{n}\left(  H^{\left(  1\right)  }\left(  \infty\right)  \right)
-\mathbb{U}_{n}\left(  H^{\left(  1\right)  }\left(  t\right)  \right)
=\mathbb{U}_{n}\left(  \overline{H}^{\left(  1\right)  }\left(  t\right)
\right)  .
\]
Thus $\overline{H}_{n}^{\left(  1\right)  }\left(  t\right)  =\mathbb{U}%
_{n}\left(  \overline{H}^{\left(  1\right)  }\left(  t\right)  \right)  $ a.s.
as well. Note that we have%
\[
n\overline{H}\left(  Z_{n:n}\right)  =1+o_{\mathbf{P}}\left(  1\right)
=nU_{1:n},
\]
then, thanks to Proposition \ref{prop6}$,$ we deduce that $\overline
{H}^{\left(  1\right)  }\left(  Z_{n:n}\right)  =pn^{-1}\left(
1+o_{\mathbf{P}}\left(  1\right)  \right)  .$ Hence, we get
\[
\overline{H}^{\left(  1\right)  }\left(  Z_{n:n}\right)  =\left(
1+o_{\mathbf{P}}\left(  1\right)  \right)  pU_{1:n}.
\]
This means that for sufficiently small $\epsilon>0,$ we have $\left(
1-\epsilon\right)  pU_{1:n}<\overline{H}^{\left(  1\right)  }\left(
Z_{n:n}\right)  <1,$ with probability tending to one, which means that
$\overline{H}^{\left(  1\right)  }\left(  Z_{n:n}\right)  \in\left(
U_{1:n},1\right)  ,$ for all large $n.$ From Theorem 1 in \cite{SW86} (page
412): for large $n,$ we have%
\[
\sup_{U_{1:n}\leq s\leq1}s/\mathbb{U}_{n}\left(  s\right)  =O_{\mathbf{P}%
}\left(  1\right)  .
\]
Therefore, by letting $s=\overline{H}^{\left(  1\right)  }\left(
Z_{n:n}\right)  ,$ we get%
\[
\frac{\overline{H}^{\left(  1\right)  }\left(  Z_{n:n}\right)  }{\overline
{H}_{n}^{\left(  1\right)  }\left(  Z_{n:n}\right)  }=\frac{\overline
{H}^{\left(  1\right)  }\left(  Z_{n:n}\right)  }{\mathbb{U}_{n}\left(
\overline{H}^{\left(  1\right)  }\left(  Z_{n:n}\right)  \right)
}=O_{\mathbf{P}}\left(  1\right)  ,
\]
which completes the proof.
\end{proof}

\begin{proposition}
\textbf{\label{prop6}}Under the assumptions of Proposition \ref{prop5}, we
have
\[
\frac{\overline{H}^{\left(  1\right)  }\left(  Z_{n:n}\right)  }{\overline
{H}\left(  Z_{n:n}\right)  }=p\left(  1+o_{\mathbf{P}}\left(  1\right)
\right)  ,\text{ as }n\rightarrow\infty.
\]

\end{proposition}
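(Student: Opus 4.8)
The statement is a deterministic regular-variation fact composed with the elementary observation that $Z_{n:n}$ diverges. Concretely, I would first prove that the non-random ratio
\[
g\left(  t\right)  :=\frac{\overline{H}^{\left(  1\right)  }\left(  t\right)  }{\overline{H}\left(  t\right)  }
\]
tends to $p$ as $t\rightarrow\infty,$ and then transfer this limit to $g\left(  Z_{n:n}\right)  $ using the fact that $Z_{n:n}\overset{\mathbf{P}}{\rightarrow}\infty.$

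For the deterministic limit, recall that by independence of $X$ and $C$ one has $\overline{H}\left(  t\right)  =\overline{F}\left(  t\right)  \overline{G}\left(  t\right)  $ and $\overline{H}^{\left(  1\right)  }\left(  t\right)  =\int_{t}^{\infty}\overline{G}\left(  y\right)  dF\left(  y\right)  ,$ so the change of variable $y=tx$ gives
\[
g\left(  t\right)  =\int_{1}^{\infty}\frac{\overline{G}\left(  tx\right)  }{\overline{G}\left(  t\right)  }\,d_{x}\!\left(  1-\frac{\overline{F}\left(  tx\right)  }{\overline{F}\left(  t\right)  }\right)  =\int_{1}^{\infty}\frac{\overline{G}\left(  tx\right)  }{\overline{G}\left(  t\right)  }\,\nu_{t}\left(  dx\right)  ,
\]
where, for each fixed $t,$ $\nu_{t}$ is the probability measure on $\left[  1,\infty\right)  $ whose distribution function $x\mapsto 1-\overline{F}\left(  tx\right)  /\overline{F}\left(  t\right)  $ increases from $0$ to $1.$ By the first-order condition $\left(  \ref{RV}\right)  $ this distribution function converges pointwise, hence (the limit being continuous) weakly, so $\nu_{t}\Rightarrow\nu$ with $\nu$ having density $\gamma_{1}^{-1}x^{-1/\gamma_{1}-1}$ on $\left[  1,\infty\right)  ;$ in parallel, the uniform convergence theorem for regularly varying functions yields $\overline{G}\left(  tx\right)  /\overline{G}\left(  t\right)  \rightarrow x^{-1/\gamma_{2}}$ uniformly on compact subsets of $\left[  1,\infty\right)  .$ The only point that genuinely needs care is the passage to the limit under the integral sign over the unbounded set $\left[  1,\infty\right)  :$ this is legitimate because the integrands are bounded by $1$ (monotonicity of $\overline{G}$), the limit $x\mapsto x^{-1/\gamma_{2}}$ is bounded and continuous on $\left[  1,\infty\right)  ,$ and $\left\{  \nu_{t}\right\}  $ is tight (being probability measures converging weakly to a probability measure), so a standard weak/dominated convergence argument gives
\[
\lim_{t\rightarrow\infty}g\left(  t\right)  =\int_{1}^{\infty}x^{-1/\gamma_{2}}\,\nu\left(  dx\right)  =\frac{1}{\gamma_{1}}\int_{1}^{\infty}x^{-\left(  1/\gamma_{1}+1/\gamma_{2}\right)  -1}\,dx=\frac{\gamma}{\gamma_{1}}=p,
\]
using $1/\gamma=1/\gamma_{1}+1/\gamma_{2}.$ (Equivalently, avoiding measures altogether, one may sandwich $\overline{H}^{\left(  1\right)  }\left(  t\right)  $ between $\left(  1\pm\delta\right)  \overline{G}\left(  t\right)  t^{1/\gamma_{2}\mp\delta}\int_{t}^{\infty}y^{-1/\gamma_{2}\pm\delta}\,dF\left(  y\right)  $ by Potter's inequalities as in Proposition \ref{prop1}, evaluate the remaining integral by an integration by parts together with Karamata's theorem, and let $\delta\downarrow0.$) This deterministic evaluation is where essentially all the work lies.

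It then remains to transfer the limit to the random argument. Since $\overline{H}$ is regularly varying with positive index it is strictly positive on all of $\left(  0,\infty\right)  ,$ so $\mathbf{P}\left(  Z_{n:n}\leq M\right)  =H\left(  M\right)  ^{n}\rightarrow0$ for every fixed $M,$ i.e. $Z_{n:n}\overset{\mathbf{P}}{\rightarrow}\infty.$ Given $\varepsilon>0,$ choose $M$ so that $\left\vert g\left(  t\right)  -p\right\vert <\varepsilon$ for all $t>M;$ then $\mathbf{P}\left(  \left\vert g\left(  Z_{n:n}\right)  -p\right\vert \geq\varepsilon\right)  \leq\mathbf{P}\left(  Z_{n:n}\leq M\right)  \rightarrow0,$ which is precisely the assertion $\overline{H}^{\left(  1\right)  }\left(  Z_{n:n}\right)  /\overline{H}\left(  Z_{n:n}\right)  =p\left(  1+o_{\mathbf{P}}\left(  1\right)  \right)  .$ The main obstacle is thus the Karamata-type computation of $g$ described above; the rest is routine.
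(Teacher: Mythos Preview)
Your argument is correct. The deterministic limit $g(t)\rightarrow p$ is carried out cleanly: the decomposition via the measures $\nu_{t}$ together with the uniform convergence theorem and the tightness/truncation step is a legitimate way to pass to the limit, and the final evaluation $\gamma/\gamma_{1}=p$ is right. The transfer to the random argument via $Z_{n:n}\overset{\mathbf{P}}{\rightarrow}\infty$ is routine, as you say.

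The paper, by contrast, does not give a self-contained proof at all: it simply invokes Lemma~4.1 of \cite{BMN-2015}, which supplies precisely the deterministic limit $\overline{H}^{(1)}(t)/\overline{H}(t)\rightarrow p$ that you work out directly. So your route is genuinely different only in that it is self-contained, whereas the paper outsources the regular-variation computation to an external reference. What your approach buys is independence from that reference and a transparent explanation of why the constant is $p$; what the paper's approach buys is brevity. Substantively the two coincide: the cited lemma is exactly the Karamata-type statement you prove, and the remaining step (composing with $Z_{n:n}\overset{\mathbf{P}}{\rightarrow}\infty$) is implicit in both.
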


\begin{proof}
This is a consequence of Lemma 4.1 in \cite{BMN-2015}.
\end{proof}

\begin{proposition}
\textbf{\label{prop7}}Under the assumptions of Proposition \ref{prop5}, we
have%
\[
\sup_{x\leq Z_{n:n}}\left\vert \frac{\overline{F}_{n}^{\left(  KM\right)
}\left(  x\right)  }{\overline{F}_{n}^{\left(  NA\right)  }\left(  x\right)
}-1\right\vert =O_{\mathbf{P}}\left(  n^{-1}\right)  ,\text{ as }%
n\rightarrow\infty.
\]

\end{proposition}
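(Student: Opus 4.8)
The plan is to reduce the ratio $\overline{F}_{n}^{(KM)}/\overline{F}_{n}^{(NA)}$ to a single product over the order statistics, pass to logarithms, and control the outcome by a Taylor expansion, exploiting that $1-u$ and $e^{-u}$ agree to first order. Dividing the product-limit form of $\overline{F}_{n}^{(KM)}$ by the exponential form of $\overline{F}_{n}^{(NA)}$ factor by factor, I first write, for $x<Z_{n:n}$ (equivalently, reading $\overline{F}_{n}^{(KM)}$ in its left-continuous version, which is all that enters $\widehat{\gamma}_{1,k}^{(\bullet)}(\beta)$, since there the products stop at the $k$-th largest observation and $\overline{F}_{n}^{(KM)}(Z_{n-k:n})>0$),
\[
\frac{\overline{F}_{n}^{(KM)}(x)}{\overline{F}_{n}^{(NA)}(x)}=\prod_{Z_{i:n}\le x}\left[\left(1-\frac{1}{n-i+1}\right)e^{1/(n-i+1)}\right]^{\delta_{[i:n]}}.
\]
Each bracket lies in $(0,1]$ because $1-u\le e^{-u}$, so the ratio is well defined, bounded by $1$, and its logarithm is a sum of nonpositive terms,
\[
-\log\frac{\overline{F}_{n}^{(KM)}(x)}{\overline{F}_{n}^{(NA)}(x)}=\sum_{Z_{i:n}\le x}\delta_{[i:n]}\,\rho(n-i+1),\qquad \rho(m):=-\log\left(1-\frac{1}{m}\right)-\frac{1}{m}.
\]

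The second step is to estimate $\rho$. From $-\log(1-u)=\sum_{\ell\ge 1}u^{\ell}/\ell$ one gets $\rho(m)=\sum_{\ell\ge 2}1/(\ell m^{\ell})$, hence $0\le\rho(m)\le\frac{1}{2}\cdot\frac{1}{m(m-1)}=\frac{1}{2}\left(\frac{1}{m-1}-\frac{1}{m}\right)$ for every $m\ge 2$. Summing over the indices $i$ with $Z_{i:n}\le x$ and telescoping, with $m_{x}:=\#\{i:Z_{i:n}\le x\}=nH_{n}(x)$,
\[
0\le-\log\frac{\overline{F}_{n}^{(KM)}(x)}{\overline{F}_{n}^{(NA)}(x)}\le\frac{1}{2}\sum_{i=1}^{m_{x}}\left(\frac{1}{n-i}-\frac{1}{n-i+1}\right)=\frac{1}{2}\left(\frac{1}{n-m_{x}}-\frac{1}{n}\right),
\]
and since $0\le 1-e^{-t}\le t$ for $t\ge 0$, this produces the purely deterministic bound $\left|\overline{F}_{n}^{(KM)}(x)/\overline{F}_{n}^{(NA)}(x)-1\right|\le\frac{1}{2}\left((n-m_{x})^{-1}-n^{-1}\right)$.

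It remains to pass to the supremum at the rate $n^{-1}$, and this is the step where I expect the real work. The deterministic bound above is $O(n^{-1})$ exactly as long as $n-m_{x}=n(1-H_{n}(x))$ stays of order $n$, i.e. as long as $x$ is kept below a fixed upper quantile; in particular, over $x\le Z_{n-k:n}$ one has $m_{x}\le n-k$ and the bound is at most $\frac{1}{2k}$, which is all that is genuinely needed to transfer Theorems \ref{theorem 1}--\ref{theorem 2} from $\widehat{\gamma}_{1,k}^{(NA)}(\beta)$ to $\widehat{\gamma}_{1,k}^{(KM)}(\beta)$, once one checks that the few near-top terms of $\widehat{\gamma}_{1,k}^{(\bullet)}(\beta)$ are asymptotically negligible. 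The obstacle to the full uniform statement over $x\le Z_{n:n}$ is that near $x=Z_{n:n}$ the at-risk sizes $n-i+1$ degenerate to $O(1)$ and the telescoping estimate only gives $O(1)$ control there; I would resolve this by exploiting the monotonicity of $x\mapsto\overline{F}_{n}^{(KM)}(x)/\overline{F}_{n}^{(NA)}(x)$ together with the weak-limit behaviour of the extreme order statistics (in the spirit of Propositions \ref{prop5}--\ref{prop6}), and then exponentiate back to recover the asserted $O_{\mathbf{P}}(n^{-1})$ bound.
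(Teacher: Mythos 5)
Your reduction is essentially the paper's: both arguments compare the two estimators factor by factor via $1-u\le e^{-u}$ and control the discrepancy through the second--order term of $\log(1-u)$. Your deterministic bound
\[
0\le-\log\frac{\overline{F}_{n}^{\left(  KM\right)  }\left(  x\right)
}{\overline{F}_{n}^{\left(  NA\right)  }\left(  x\right)  }\le\frac{1}
{2}\left(  \frac{1}{n-m_{x}}-\frac{1}{n}\right)  ,\qquad m_{x}=nH_{n}\left(
x\right)  ,
\]
is correct and is in fact tighter than what the paper obtains (the paper keeps the $\delta_{\left[  i:n\right]  }$ weights, bounds $\overline{H}_{n}$ below by $\overline{H}_{n}^{\left(  1\right)  }$, telescopes $\int_{0}^{x}dH_{n}^{\left(  1\right)  }/[\overline{H}_{n}^{\left(  1\right)  }]^{2}$ and invokes Propositions \ref{prop5}--\ref{prop6}; this yields a bound of the same shape, namely the reciprocal of the number of \emph{uncensored} observations above $x$, which is larger than yours).

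The genuine gap is the step you defer: passing to $\sup_{x\le Z_{n:n}}$ at rate $n^{-1}$. No appeal to monotonicity or to the behaviour of the top order statistics can close it, because the claim fails there. If $\delta_{\left[  n-1:n\right]  }=1$ (an event of asymptotically positive probability), the single factor at $i=n-1$ already forces $\overline{F}_{n}^{\left(  KM\right)  }\left(  Z_{n-1:n}\right)  /\overline{F}_{n}^{\left(  NA\right)  }\left(  Z_{n-1:n}\right)  \le\left(  1-\frac{1}{2}\right)  e^{1/2}\approx0.824$, and if $\delta_{\left[  n:n\right]  }=1$ the ratio at $x=Z_{n:n}$ equals $0$; in either case the supremum is bounded away from zero. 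Even over $x\le Z_{n-k:n}$ your own tight telescoping shows the correct uniform rate is $O\left(  1/k\right)  $, not $O\left(  1/n\right)  $, since the dominant contribution $\rho\left(  k+1\right)  \asymp k^{-2}$ is summed over roughly $n-k$ indices. (The paper's proof reaches $O_{\mathbf{P}}\left(  n^{-1}\right)  $ only because the prefactor of the remainder integral $I_{n2}$ is recorded as $O_{\mathbf{P}}\left(  n^{-2}\right)  $ where the expansion of $n\int_{0}^{x}\log\left(  1-1/\left(  n\overline{H}_{n}\right)  \right)  dH_{n}^{\left(  1\right)  }$ actually produces $O_{\mathbf{P}}\left(  n^{-1}\right)  $, and because the Taylor remainder is treated as uniform in a range where $n\overline{H}_{n}\left(  t-\right)  $ is of order one.) So the honest output of your computation --- a uniform $O\left(  1/k\right)  $ bound below $Z_{n-k:n}$ plus a separate check that the top few terms of $\widehat{\gamma}_{1,k}^{\left(  \bullet\right)  }\left(  \beta\right)  $ are negligible --- is what actually transfers the limit theorems between the two estimators; you should prove that weaker statement rather than search for the missing argument.
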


\begin{proof}
For $x\leq Z_{n:n},$ we have%
\begin{align*}
\overline{F}_{n}^{\left(  KM\right)  }\left(  x\right)   &  =\prod
_{Z_{i:n}\leq x}\left(  1-\frac{1}{n-i+1}\right)  ^{\delta_{\left[
i:n\right]  }}\\
&  =\exp\left\{  \sum_{i=1}^{n}\mathbb{I}_{\left\{  Z_{i:n}\leq x\right\}
}\delta_{\left[  i:n\right]  }\log\left(  1-\frac{1}{n-i+1}\right)  \right\}
\\
&  =\exp\left\{  n\int_{0}^{x}\log\left(  1-\frac{1}{n\overline{H}_{n}\left(
t\right)  }\right)  dH_{n}^{\left(  1\right)  }\left(  t\right)  \right\}  .
\end{align*}
Using Taylor's expansion, as $n\rightarrow\infty,$ yields that%
\[
\log\left(  1-\frac{1}{n\overline{H}_{n}\left(  t\right)  }\right)  =-\frac
{1}{n\overline{H}_{n}\left(  t\right)  }+O_{\mathbf{P}}\left(  \frac{1}%
{n^{2}\left[  \overline{H}_{n}\left(  t\right)  \right]  ^{2}}\right)  ,\text{
for }0<t<x,
\]
It follows that%
\[
\overline{F}_{n}^{\left(  KM\right)  }\left(  x\right)  =\exp\left(  -\int%
_{0}^{x}\frac{dH_{n}^{\left(  1\right)  }\left(  t\right)  }{\overline{H}%
_{n}\left(  t\right)  }\right)  \exp\left(  O_{\mathbf{P}}\left(
n^{-2}\right)  \int_{0}^{x}\frac{dH_{n}^{\left(  1\right)  }\left(  t\right)
}{\left[  \overline{H}_{n}\left(  t\right)  \right]  ^{2}}\right)  ,
\]
which equals $I_{n1}\left(  x\right)  \left(  1+O_{\mathbf{P}}\left(
n^{-2}\right)  I_{n2}\left(  x\right)  \right)  ,$ as $n\rightarrow\infty,$
where
\[
I_{n1}\left(  x\right)  :=\exp\left(  -\int_{0}^{x}\frac{dH_{n}^{\left(
1\right)  }\left(  t\right)  }{\overline{H}_{n}\left(  t\right)  }\right)
\text{ and }I_{n2}\left(  x\right)  :=\int_{0}^{x}\frac{dH_{n}^{\left(
1\right)  }\left(  t\right)  }{\left[  \overline{H}_{n}\left(  t\right)
\right]  ^{2}}.
\]
Observe that%
\begin{align*}
I_{n1}\left(  x\right)   &  =\exp\left(  -\int_{0}^{x}\frac{dH_{n}^{\left(
1\right)  }\left(  t\right)  }{\overline{H}_{n}\left(  t\right)  }\right)
=\exp\left(  -\sum_{i=1}^{n}\mathbb{I}_{\left\{  Z_{j:n}\leq x\right\}  }%
\frac{\delta_{\left[  j:n\right]  }}{\overline{H}_{n}\left(  Z_{j:n}\right)
}\right)  \\
&  =\prod_{Z_{j:n}\leq x}\exp\left\{  -\frac{\delta_{\left[  j:n\right]  }%
}{n-j+1}\right\}  =\overline{F}_{n}^{\left(  NA\right)  }\left(  x\right)  .
\end{align*}
It remains to show that $I_{n2}\left(  x\right)  =O_{\mathbf{P}}\left(
n\right)  ,$ uniformly over $0<x\leq Z_{n:n},$ for all large $n.$ Recall that
$\overline{H}_{n}\left(  t\right)  =\overline{H}_{n}^{\left(  1\right)
}\left(  t\right)  +\overline{H}_{n}^{\left(  0\right)  }\left(  t\right)  ,$
then $\overline{H}_{n}\left(  t\right)  >\overline{H}_{n}^{\left(  1\right)
}\left(  t\right)  ,$ it follows that%
\[
0<\int_{0}^{x}\frac{dH_{n}^{\left(  1\right)  }\left(  t\right)  }{\left[
\overline{H}_{n}\left(  t\right)  \right]  ^{2}}<\int_{0}^{x}\frac
{dH_{n}^{\left(  1\right)  }\left(  t\right)  }{\left[  \overline{H}%
_{n}^{\left(  1\right)  }\left(  t\right)  \right]  ^{2}}=\frac{1}%
{\overline{H}_{n}^{\left(  1\right)  }\left(  x\right)  }-\frac{1}%
{\overline{H}_{n}^{\left(  1\right)  }\left(  0\right)  }<\frac{1}%
{\overline{H}_{n}^{\left(  1\right)  }\left(  x\right)  }.
\]
Note that $\overline{H}_{n}^{\left(  1\right)  }$ is deceasing, it follows
that $1/\overline{H}_{n}^{\left(  1\right)  }\left(  x\right)  \leq
1/\overline{H}_{n}^{\left(  1\right)  }\left(  Z_{n:n}\right)  .$ On the other
hand, from Propositions \ref{prop5} and \ref{prop6}, we have
\[
\overline{H}^{\left(  1\right)  }\left(  Z_{n:n}\right)  /\overline{H}%
_{n}^{\left(  1\right)  }\left(  Z_{n:n}\right)  =O_{\mathbf{P}}\left(
1\right)  \text{ and }\overline{H}\left(  Z_{n:n}\right)  /\overline
{H}^{\left(  1\right)  }\left(  Z_{n:n}\right)  =p\left(  1+o_{\mathbf{P}%
}\left(  1\right)  \right)  ,
\]
it follows that $1/\overline{H}_{n}^{\left(  1\right)  }\left(  x\right)
=O_{\mathbf{P}}\left(  1/\overline{H}\left(  Z_{n:n}\right)  \right)  .$
Recall that\ $n\overline{H}\left(  Z_{n:n}\right)  \overset{\mathbf{P}%
}{\rightarrow}1,$ therefore, $1/\overline{H}_{n}^{\left(  1\right)  }\left(
x\right)  =O_{\mathbf{P}}\left(  n\right)  $ and so $I_{n2}\left(  x\right)
=O_{\mathbf{P}}\left(  n\right)  $ as well. Consequently, we have%
\[
\overline{F}_{n}^{\left(  KM\right)  }\left(  x\right)  =\overline{F}%
_{n}^{\left(  NA\right)  }\left(  x\right)  \left(  1+O_{\mathbf{P}}\left(
n^{-1}\right)  \right)  ,
\]
uniformly over $x\geq Z_{n:n}$ as intended.
\end{proof}

\newpage

\section{\textbf{Appendix B \label{Appendix B}}}%

\begin{figure}[h]%
\centering
\includegraphics[
height=2.7025in,
width=5.6593in
]%
{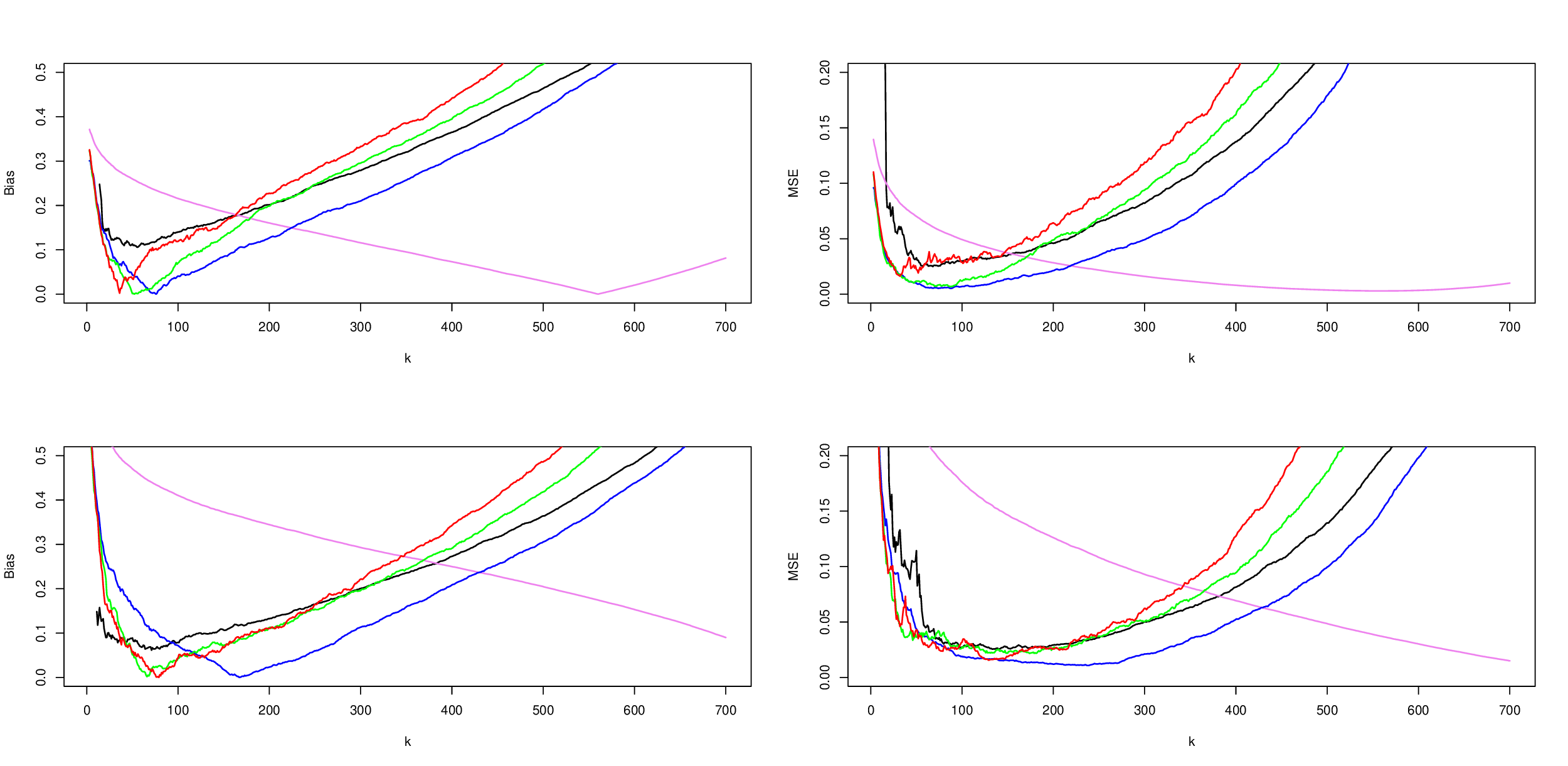}%
\caption{Bias (left panel) and MSE (right panel) of $\protect\widehat{\gamma
}_{1,k}^{\left(  NA\right)  }\left(  1.01\right)  $ (blue line),
$\protect\widehat{\gamma}_{1,k}^{\left(  NA\right)  }\left(  1.5\right)  $
(green line), $\protect\widehat{\gamma}_{1,k}^{\left(  NA\right)  }\left(
2\right)  $ (red line), $\protect\widehat{\gamma}_{1,k}^{\left(  MNS\right)
}$ (purple line) and $\protect\widehat{\gamma}_{1,k}^{\left(  EFG\right)  }$
(black line) based on $2000$ samples of size $1000$ from Burr model censored
by Burr for $\gamma_{1}=0.4$ (top) and $\gamma_{1}=0.7$ (bottom), with
$p=0.30.$ }%
\label{fig1}%
\end{figure}
%

\begin{figure}[h]%
\centering
\includegraphics[
height=2.7129in,
width=5.5971in
]%
{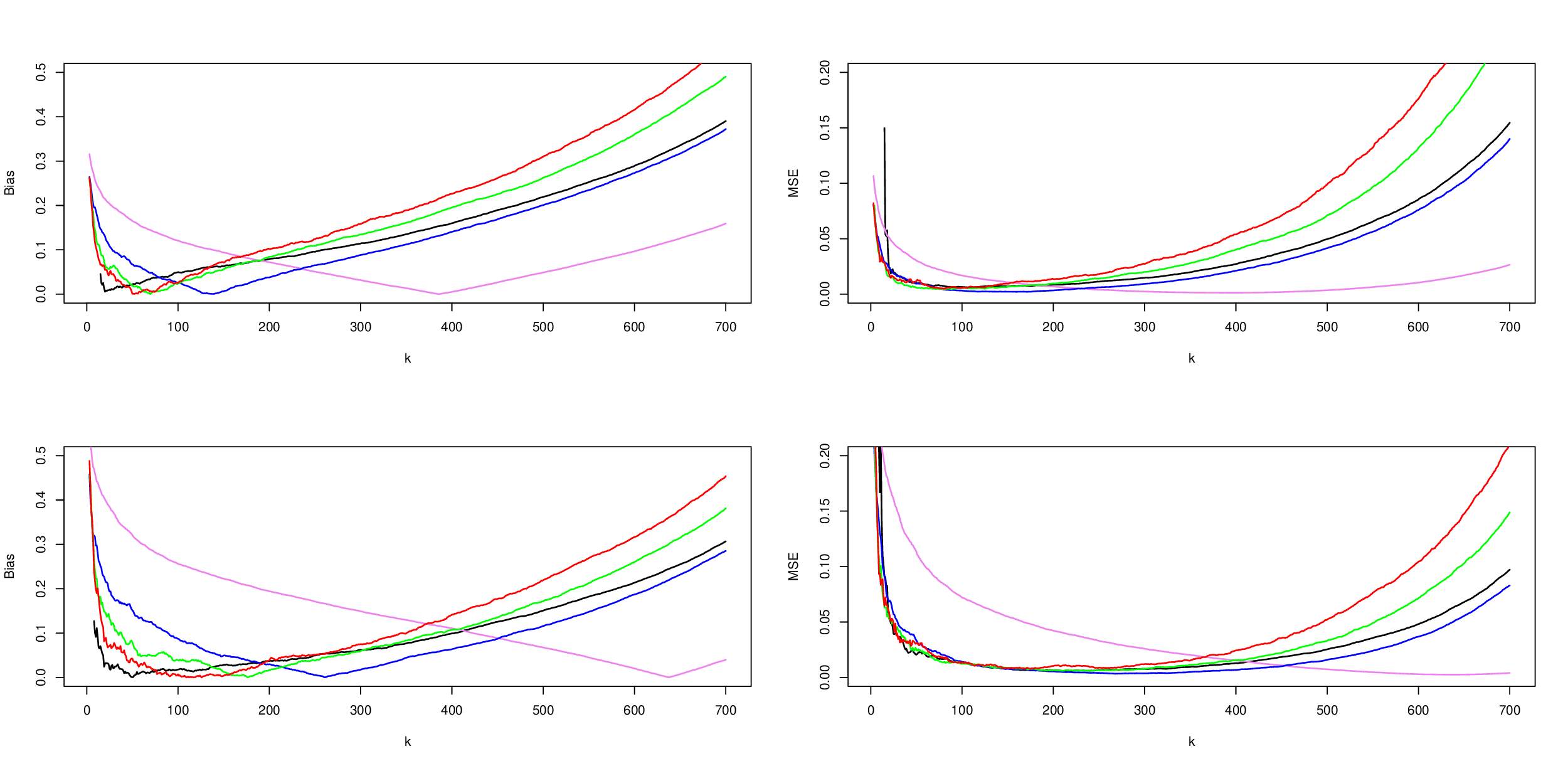}%
\caption{Bias (left panel) and MSE (right panel) of $\protect\widehat{\gamma
}_{1,k}^{\left(  NA\right)  }\left(  1.01\right)  $ (blue line),
$\protect\widehat{\gamma}_{1,k}^{\left(  NA\right)  }\left(  1.5\right)  $
(green line), $\protect\widehat{\gamma}_{1,k}^{\left(  NA\right)  }\left(
2\right)  $ (red line), $\protect\widehat{\gamma}_{1,k}^{\left(  MNS\right)
}$ (purple line) and $\protect\widehat{\gamma}_{1,k}^{\left(  EFG\right)  }$
(black line) based on $2000$ samples of size $1000$ from Burr model censored
by Burr for $\gamma_{1}=0.4$ (top) and $\gamma_{1}=0.7$ (bottom) with
$p=0.50.$}%
\label{fig2}%
\end{figure}
%

\begin{figure}[h]%
\centering
\includegraphics[
height=2.6714in,
width=5.5971in
]%
{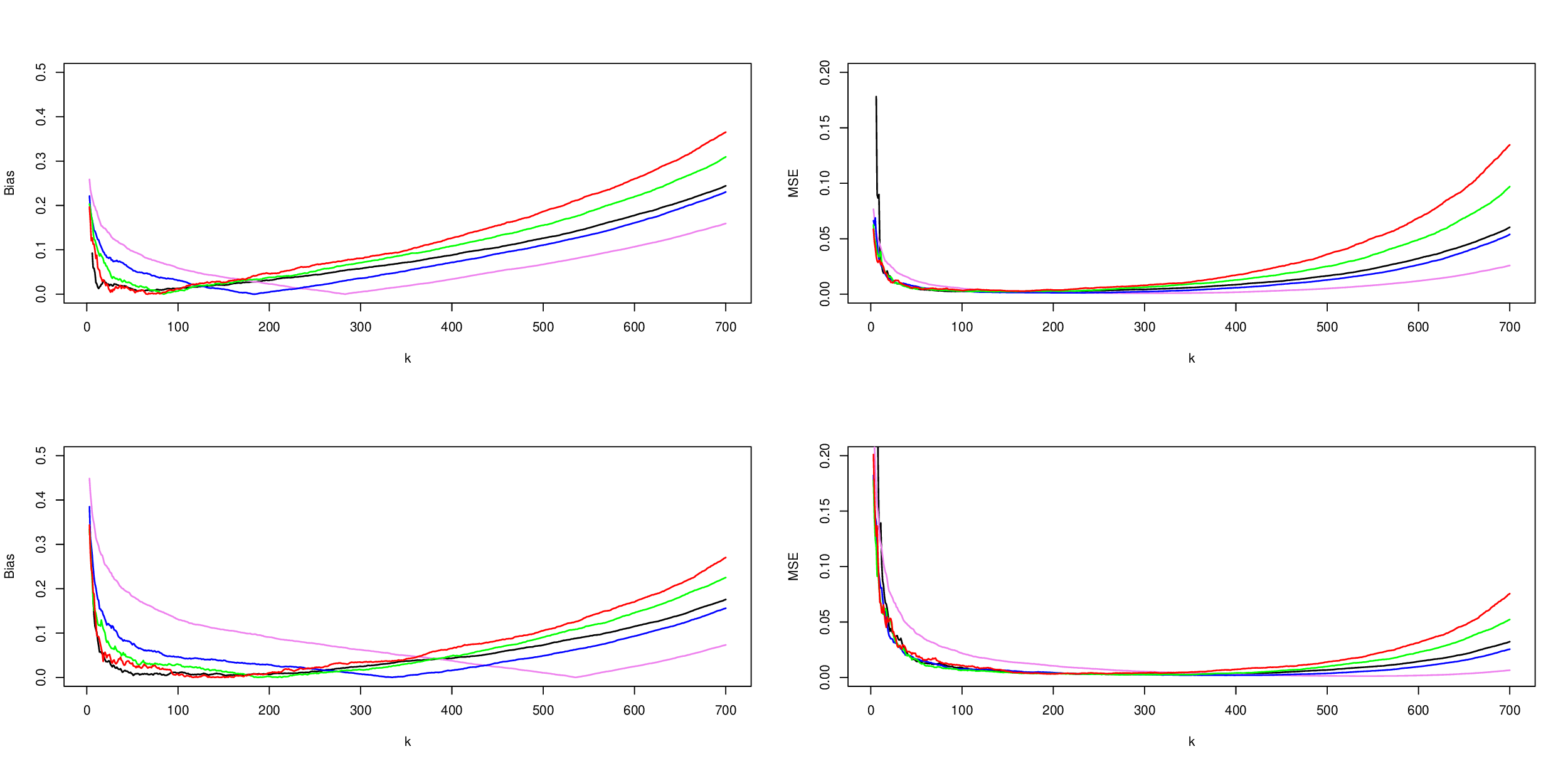}%
\caption{Bias (left panel) and MSE (right panel) of $\protect\widehat{\gamma
}_{1,k}^{\left(  NA\right)  }\left(  1.01\right)  $ (blue line),
$\protect\widehat{\gamma}_{1,k}^{\left(  NA\right)  }\left(  1.5\right)  $
(green line), $\protect\widehat{\gamma}_{1,k}^{\left(  NA\right)  }\left(
2\right)  $ (red line), $\protect\widehat{\gamma}_{1,k}^{\left(  MNS\right)
}$ (purple line) and $\protect\widehat{\gamma}_{1,k}^{\left(  EFG\right)  }$
(black line) based on $2000$ samples of size $1000$ from Burr model censored
by Burr for $\gamma_{1}=0.4$ (top) and $\gamma_{1}=0.7$ (bottom), with
$p=0.70.$}%
\label{fig3}%
\end{figure}
%

\begin{figure}[h]%
\centering
\includegraphics[
height=2.7025in,
width=5.5763in
]%
{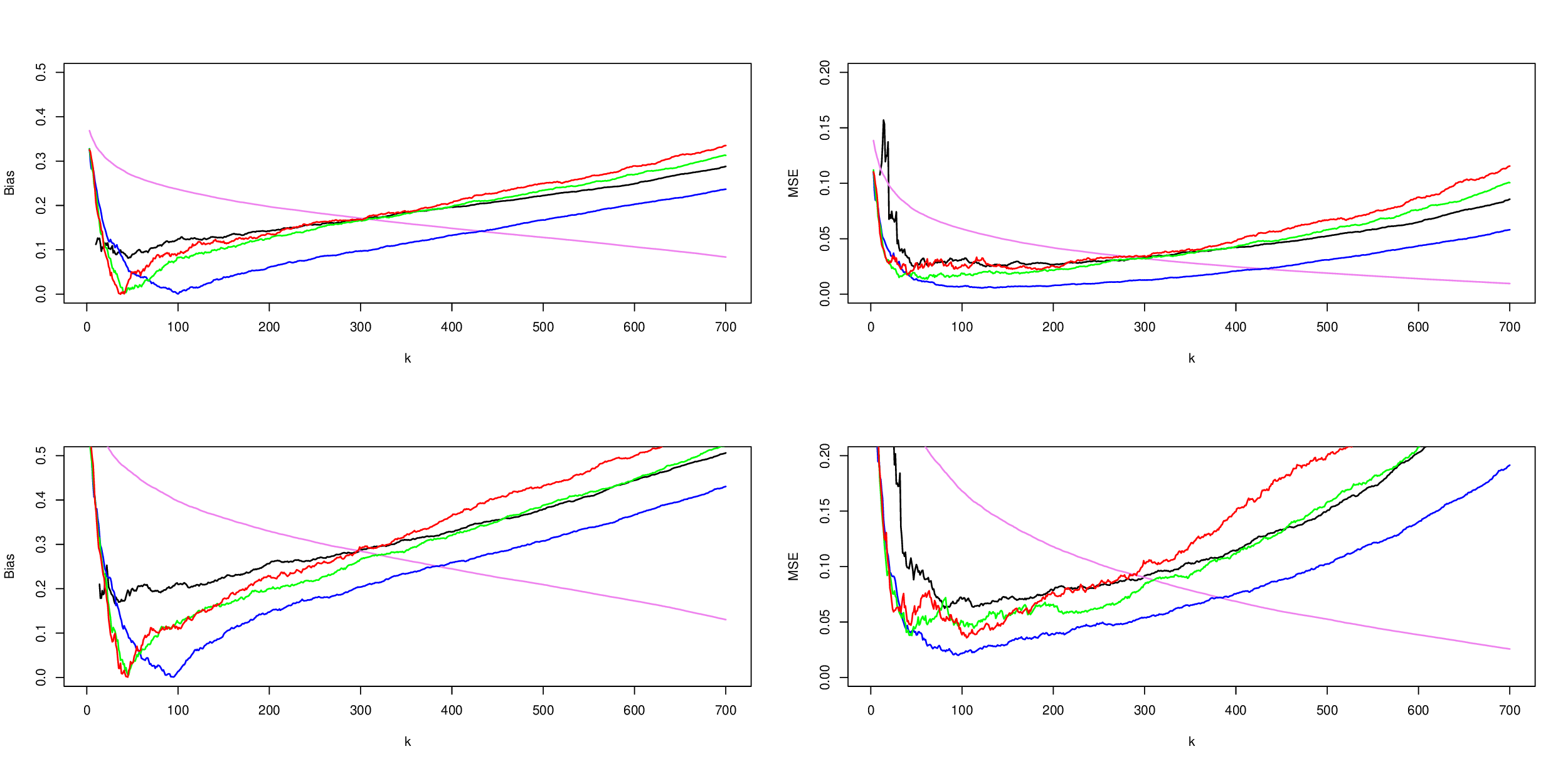}%
\caption{Bias (left panel) and MSE (right panel) of $\protect\widehat{\gamma
}_{1,k}^{\left(  NA\right)  }\left(  1.01\right)  $ (blue line),
$\protect\widehat{\gamma}_{1,k}^{\left(  NA\right)  }\left(  1.5\right)  $
(green line), $\protect\widehat{\gamma}_{1,k}^{\left(  NA\right)  }\left(
2\right)  $ (red line), $\protect\widehat{\gamma}_{1,k}^{\left(  MNS\right)
}$ (purple line) and $\protect\widehat{\gamma}_{1,k}^{\left(  EFG\right)  }$
(black line) based on $2000$ samples of size $1000$ from Fr\'{e}chet model
censored by Fr\'{e}chet for $\gamma_{1}=0.4$ (top) and $\gamma_{1}=0.7$
(bottom), with $p=0.30.$}%
\label{fig4}%
\end{figure}
%

\begin{figure}[h]%
\centering
\includegraphics[
height=2.7233in,
width=5.5763in
]%
{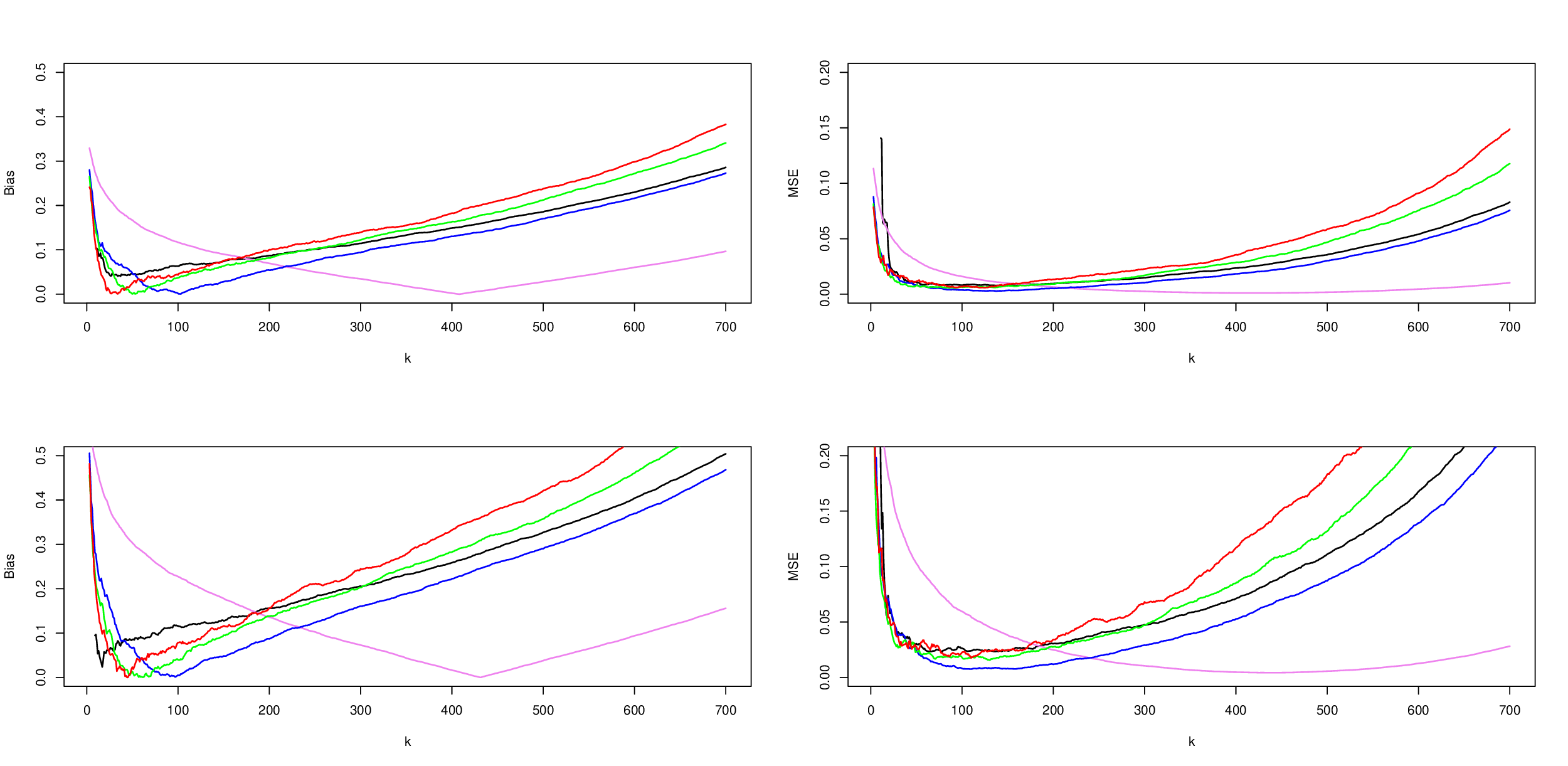}%
\caption{Bias (left panel) and MSE (right panel) of $\protect\widehat{\gamma
}_{1,k}^{\left(  NA\right)  }\left(  1.01\right)  $ (blue line),
$\protect\widehat{\gamma}_{1,k}^{\left(  NA\right)  }\left(  1.5\right)  $
(green line), $\protect\widehat{\gamma}_{1,k}^{\left(  NA\right)  }\left(
2\right)  $ (red line), $\protect\widehat{\gamma}_{1,k}^{\left(  MNS\right)
}$ (purple line) and $\protect\widehat{\gamma}_{1,k}^{\left(  EFG\right)  }$
(black line) based on $2000$ samples of size $1000$ from Fr\'{e}chet model
censored by Fr\'{e}chet for $\gamma_{1}=0.4$ (top) and $\gamma_{1}=0.7$
(bottom), with $p=0.50.$}%
\label{fig5}%
\end{figure}
%

\begin{figure}[h]%
\centering
\includegraphics[
height=2.7129in,
width=5.4829in
]%
{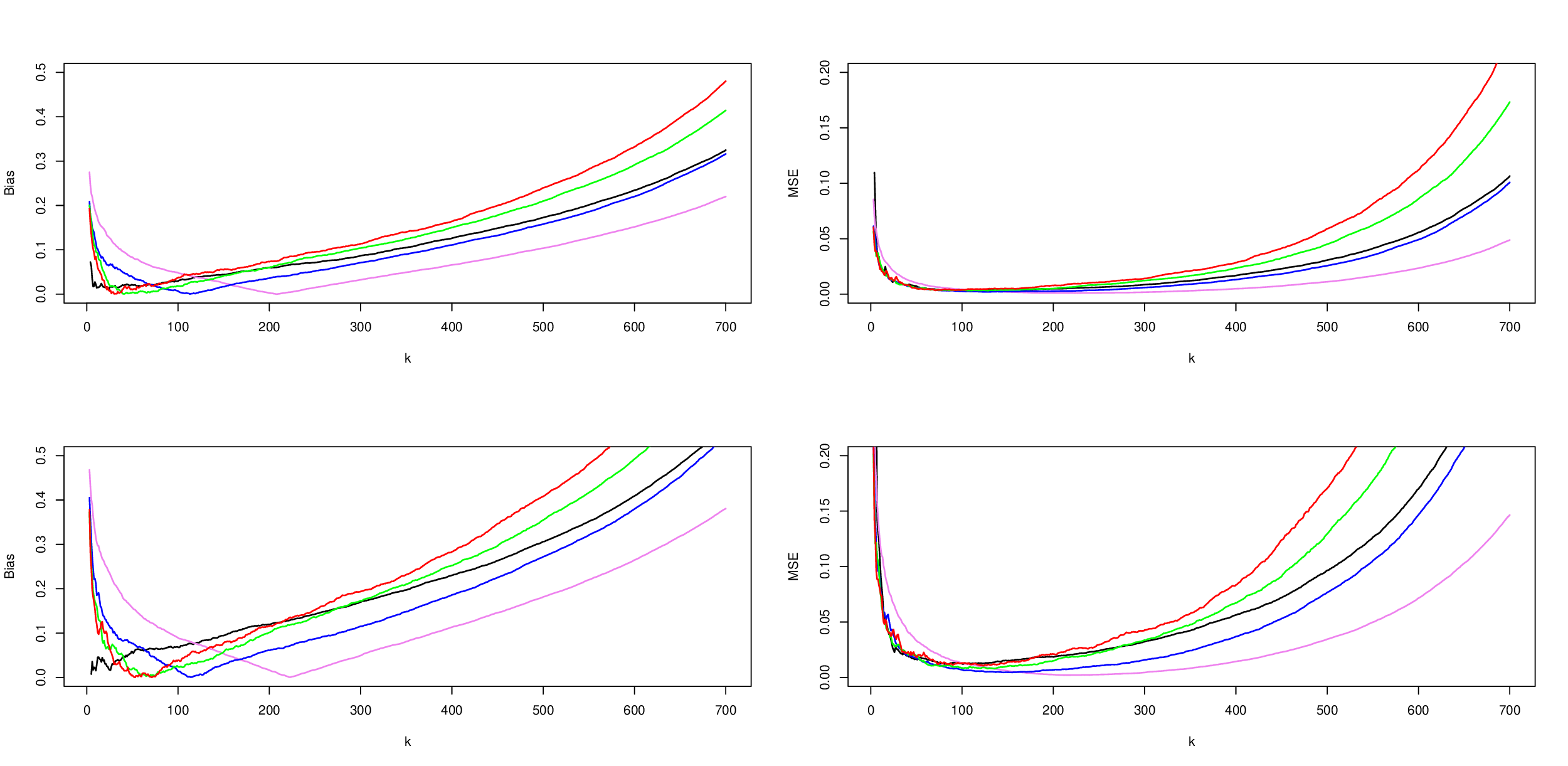}%
\caption{Bias (left panel) and MSE (right panel) of $\protect\widehat{\gamma
}_{1,k}^{\left(  NA\right)  }\left(  1.01\right)  $ (blue line),
$\protect\widehat{\gamma}_{1,k}^{\left(  NA\right)  }\left(  1.5\right)  $
(green line), $\protect\widehat{\gamma}_{1,k}^{\left(  NA\right)  }\left(
2\right)  $ (red line), $\protect\widehat{\gamma}_{1,k}^{\left(  MNS\right)
}$ (purple line) and $\protect\widehat{\gamma}_{1,k}^{\left(  EFG\right)  }$
(black line) based on $2000$ samples of size $1000$ from Fr\'{e}chet model
censored by Fr\'{e}chet for $\gamma_{1}=0.4$ (top) and $\gamma_{1}=0.7$
(bottom), with $p=0.70.$}%
\label{fig6}%
\end{figure}

\end{document}